\documentclass[10pt]{article}

\usepackage[english]{babel}
\usepackage[utf8]{inputenc}
\usepackage{amsmath,amsthm,amsfonts,amssymb}
\makeatletter
\g@addto@macro\normalsize{%
  \setlength\abovedisplayskip{4pt plus 2pt minus 1pt}%
  \setlength\belowdisplayskip{4pt plus 2pt minus 1pt}%
  \setlength\abovedisplayshortskip{2pt plus 2pt}%
  \setlength\belowdisplayshortskip{3pt plus 2pt minus 1pt}%
}
\makeatother

\usepackage[hmargin=2.5cm, tmargin=2.5cm, bmargin=2.5cm]{geometry}
\usepackage{graphicx}
\usepackage{subcaption}
\usepackage{hyperref}
\usepackage{cleveref}
\usepackage{enumerate}
\usepackage{enumitem}
\usepackage{empheq}
\usepackage{cite}
\usepackage{multirow}
\usepackage{multicol}
\usepackage{upgreek} 
\usepackage[bbgreekl]{mathbbol}
\usepackage{chemmacros}
\usepackage{xurl}
\usepackage{tabularx}
\usepackage{booktabs}
\usepackage[ruled,vlined]{algorithm2e}
\usepackage{makecell}
\usepackage[labelfont=bf,format=plain,justification=centering,singlelinecheck=false]{caption}
\usepackage{dsfont}
\usepackage{pifont} 
\usepackage{stmaryrd}
\usepackage{listings}
\usepackage{color} 
\usepackage{cases}
\usepackage{placeins}
\usepackage{transparent} 
\usepackage{eso-pic} 
\usepackage{tikz} 
\usetikzlibrary{}
\usepackage{subcaption}
\usepackage{xcolor} 
\usepackage{float}
\usepackage{mathrsfs}
\usepackage{todonotes}

\hypersetup{colorlinks, linkcolor=blue, citecolor=magenta,
urlcolor=magenta, plainpages=false, pdfwindowui=false,
pdfstartview={FitH}, pdftitle={A}}

\numberwithin{equation}{section}
\allowdisplaybreaks

\newtheorem{definition}{Definition}[section]
\newtheorem{lemma}{Lemma}[section]
\newtheorem{prop}{Proposition}[section]
\newtheorem{theorem}{Theorem}[section]

\newtheoremstyle{remarkstyle}{\topsep}
                             {\topsep}
			     {}            
			     {}            
			     {\scshape}    
			     {.}           
			     { }           
			     {\thmname{#1}\thmnumber{ #2}\thmnote{ #3}}
\theoremstyle{remarkstyle}
\newtheorem{remark}{Remark}[section]

\numberwithin{definition}{section}
\numberwithin{lemma}{section}
\numberwithin{prop}{section}
\numberwithin{theorem}{section}
\numberwithin{corol}{section}
\numberwithin{remark}{section}

\newcommand\widecheck[1]{%
    #1%
    \text{\llap{%
        \smash{\raisebox{\dimexpr 2\depth-0.4ex\relax}{%
            \scalebox{1}[-1]{$\widehat{\phantom{#1}}$}%
        }}%
    }}%
}

\newcommand{\fs}{\tau} 
\newcommand{\fv}{\boldsymbol{\tau}} 
\newcommand{\ov}{\boldsymbol{u}} 
\newcommand{\ovc}{u} 
\newcommand{\ovf}{U} 
\newcommand{\bv}{\boldsymbol{e}} 
\newcommand{\rhsv}{\boldsymbol{b}} 
\newcommand{\ovw}{\boldsymbol{v}} 
\newcommand{\fsx}{f} 
\newcommand{\fsy}{g} 
\newcommand{\fst}{h} 
\newcommand{\resns}{\overline{\delta}} 
\newcommand{\resnv}{\overline{\boldsymbol{\delta}}} 
\newcommand{\NLresns}{\delta} 
\newcommand{\NLresnv}{\boldsymbol{\delta}} 
\newcommand{\aef}{\psi} 
\newcommand{\nbsp}{\mathfrak{n}} 
\newcommand{\nbel}{\mathfrak{e}} 
\newcommand{\nbre}{\mathfrak{r}} 
\newcommand{\nbbad}{\mathfrak{m}} 

\newcommand{\nmole}{\textsf{n}}
\newcommand{\nmolevec}{\boldsymbol{\nmole}}
\newcommand{\NN}{\mathbb{N}}
\newcommand{\RR}{\mathbb{R}}
\newcommand{\RN}{\mathbb{R}^N}

\newcommand{\HHO}{\mathrm{H_2O}}
\newcommand{\Hp}{\mathrm{H}^+}
\newcommand{\OHm}{\mathrm{OH}^-}
\newcommand{\NaCl}{\mathrm{NaCl}}

\title{Acceleration of Newton's method for nonlinear algebraic systems by preflattening and elimination techniques}

\author{Ngoc Do Quyen \textsc{Dang}\thanks{IFP Energies nouvelles, 1 et 4 avenue de Bois Pr\'eau, 92852 Rueil-Malmaison Cedex, France. \texttt{ngoc-do-quyen.dang@ifpen.fr}, \texttt{ibtihel.ben-gharbia@ifpen.fr}, \texttt{quang-huy.tran@ifpen.fr}}
    \and Ibtihel \textsc{Ben Gharbia}${}^{*}$
    \and Clément \textsc{Canc\`{e}s}\thanks{Univ. Lille, CNRS, Inria, UMR 8524 -- Laboratoire Paul Painlev\'e, 59000 Lille, France. \texttt{clement.cances@inria.fr}}
    \and  Quang Huy \textsc{Tran}${}^{*}$}

\begin{document}

\maketitle

\begin{abstract}
In this paper, we propose a new approach called {\em preflattening} to speed up the resolution of nonlinear algebraic systems by Newton-type methods. Preflattening is meant to be for nonlinear systems what preconditioning is for linear ones. The idea is to transform the system into an equivalent one that is more favorable to Newton's method. To this end, we introduce the notion of {\em flatness}, which is intended to play for nonlinear systems a role similar to that of the condition number for linear systems. For a physical model of interest in the geosciences, namely the chemical equilibrium problem, we combine this strategy with the nonlinear elimination technique and demonstrate the value of the resulting approach through numerical simulations.
\end{abstract}
{\small
\begin{center}
\textbf{Keywords}\medskip\\
Newton's method, preconditioning, preflattening, chemical equilibrium
\end{center}
\begin{center}
\textbf{Mathematics subject classification}\medskip\\
65H10, 49M15, 65F35
\end{center}
}

\section{Introduction}
\subsection{Context}
The numerical discretization of physical models in many fields often leads to very large nonlinear algebraic systems. Solving such systems is a major challenge in industrial applications. Traditionally, Newton's method is employed for this purpose. Its main advantage lies in its high rate of convergence. Under appropriate assumptions on the initial point and on the residual function, it achieves superlinear or even quadratic convergence.

In practice, however, Newton's method suffers from a few well-known drawbacks. First, each Newton iteration requires the solution of a linear system, an operation that becomes expensive for high-dimensional problems. Second, it is very difficult to select an initial guess that ensures convergence by satisfying the assumptions of the available theoretical results \cite{Kantorovich1948,OrtegaRheinboldt1970,Smale1986,Deuflhard2011}. The lack of a judicious starting point may cause slow convergence or divergence. Third, strong nonlinearities in the equations, such as those induced by large time steps, bad choices of variables or stiff closure laws, are likely to deteriorate the performance of the method. Finally, unbalanced nonlinearities are also a source of trouble. Hence, the quest for an enhanced {\em effectiveness} of Newton's method remains an extensive area of research. This is the context in which our work is set.

Over the past two decades, a class of techniques known as {\em nonlinear preconditioning} has proved to be a great help in accelerating the resolution of nonlinear algebraic systems. The basic idea of nonlinear preconditioning is to replace the original system by an equivalent\footnote{in the sense that the new system has the same solution, up to an easily computable change of variables.} one, which lends itself more favorably to Newton's method or to its variants.

The construction of nonlinear preconditioners proceeds by analogy with that of linear ones: most of the mathematical machinery developed for the linear case can be formally mimicked or straightforwardly carried over to the nonlinear case. Without claiming exhaustiveness, let us highlight some notable developments, including additive methods such as ASPIN \cite{CaiKeyes2002} and RASPEN \cite{DoleanGKKM2016}, multiplicative methods such as  INB-NE \cite{HwangLinCai2010}, MSPIN \cite{LiuKeyes2015} and NEPIN \cite{LiuHLCK2022}, nonlinear versions of FETI-DP \cite{KlawonnLanserRheinbach2014} and BDDC \cite{KlawonnLanserRheinbach2018}. It is also worth mentioning a simpler family of methods known as Jacobi-Newton \cite{Brenner2023, Brenner2020}, dedicated to systems with ``mild'' nonlinearities.

Three recent sets of work addressed various aspects of nonlinear systems through techniques related to nonlinear preconditioning. Bassetto and her coauthors \cite{Bassetto2021,BassettoCET2020} extended the Brenner--Canc\`es parametrization \cite{BrennerCances2017} to realistic relative permeability laws and capillary pressure-saturation relationships in porous media flow. This may be interpreted as a nonlinear right-preconditioner. Jonval and his coauthors \cite{Jonval2024,JonvalBGCFT2025,Jonval2025b} not only applied the Brenner--Canc\`es parametrization to ideal activity models in equilibrium chemistry, but also put forward a so-called Cartesian representation technique. The latter can be viewed as a nonlinear left-preconditioner. Marelli and her coauthors \cite{Marelli2025,BrennerMarelliTran2025,BrennerMarelliTran2026} explored dynamic versions of nonlinear preconditioners based on domain decomposition and variable splitting, in order to accelerate the solution of nonlinear systems arising from PDEs. Emphasis is placed on nonlinear elimination \cite{LanzkronRoseWilkes1996} (see also method D2 of \cite{BassettoCET2022}). Nonlinear preconditioning can thus serve multiple goals: to enforce robustness by preventing the blow-up of derivatives in all regimes \cite{Bassetto2021,Jonval2024} and to increase speed by reducing the number of iterations \cite{Marelli2025}.

\subsection{Objectives}
There is an important caveat to the design of nonlinear preconditioners by formal analogy with linear ones. In the linear world, the {\em condition number} of the preconditioned system can be made lower than that of the original system. Due to the remarkable coincidence that the condition number also governs the rate of convergence of iterative linear solvers, speed-up is guaranteed. In the nonlinear setting, by contrast, no analogous scalar quantity can be identified that simultaneously: (i) decreases between the unpreconditioned and preconditioned systems; (ii) governs the rate of convergence in terms of Newton iterations.

Actually, a notion of condition number can be defined \cite{Rheinboldt1976,Rice1966} for a nonlinear function $F$. This quantity depends on the current point $\ov$ and a surrounding neighborhood. It involves first-order properties of $F$ and its inverse $F^{-1}$ and only serves as a measure of the sensitivity of the relative error in the solution of $F(\ov)=\rhsv$ with respect to perturbations in $\rhsv$ and $F$. Such a notion of conditioning never appears in convergence analyses of Newton's method, where second-order information is required. Even assuming that it plays a meaningful role in the convergence rate of Newton, the current preconditioning strategies do not aim to diminish it.

These observations point to a relative weakness in the foundation of nonlinear preconditioning and prompt us to develop a less heuristic approach, which we refer to as {\em preflattening}. The basic tenet is to transform the given system into a locally ``less nonlinear'' one. Indeed, it is well known that Newton's method converges in a single iteration for an affine function. Intuitively, the closer a function is to being affine, the better Newton's method will behave. The advantage of preflattening is to act exclusively at the nonlinear level of Newton's method, thus disentangling the two sources of difficulty and leaving the linear level to existing linear preconditioners.

To implement this philosophy, we introduce the notion of {\em flatness number} $\rho_F(\ov)$ in \S\ref{ch:notion}. The latter, which appears as a dimensionless measure of the nonlinearity of $F$ at $\ov$, is furthermore connected, albeit not identical, to the rate of convergence of Newton such as provided by classical theorems. {\em Right-preflattening} then amounts to solving $F(\ovf(\fv))=0$ for $\fv$, followed by $\ov=\ovf(\fv)$, instead of solving $F(\ov)=0$ for $\ov$. The change of variables $U$ is performed at each iteration so as to ensure $\rho_{F\circ \ovf}(\fv) < \rho_F(\ov)$ at the same iteration.

After some prerequisites in \S\ref{ch:notion}, the explicit construction of a system-dependent $U$ is given in \S\ref{ch:scalar} and \S\ref{ch:chemistry}, where we advocate the {\em Continuous Dynamic} (CD) method by minimizing $\rho_{F\circ U}(\ovf^{-1}(\ov))$ over a parametrized family of candidates $\ovf$ at the current iterate $\ov$. While the CD method works extremely well for the scalar equation of \S\ref{ch:scalar}, it does not yield satisfactory results for systems. This is why we envisage a combination of CD with the {\em nonlinear elimination} strategy in \S\ref{ch:chemistry}, which allows us to recover substantial gains in the number of iterations.

\section{Notion of flatness and preflattening}\label{ch:notion}
\subsection{Notion of flatness}\label{sse:notion}
We consider an algebraic system of equations of the form 
\begin{equation}\label{eq:orisystem}
    F(\ov)=0, 
\end{equation}
where $F:\Omega\subseteq \RN \rightarrow \RN$ is a vector-valued function. For simplicity, let us assume that: (i) $F$ is twice continuously differentiable on $\Omega$; (ii) the solution $\ov^*$ of $\eqref{eq:orisystem}$ is unique; (iii) the Jacobian matrix $\nabla F(\ov)$ is always nonsingular. Given an initial guess $\ov^{0}\in \Omega$, we wish to solve \eqref{eq:orisystem} by means of the Newton method
\begin{equation}\label{eq:orinewton}
    \ov^{k+1} = \ov^{k} - \nabla F(\ov^k)^{-1} F(\ov^k),
\end{equation}
in which the superscript $k\in\NN$ denotes the iteration number.

We are interested in defining a quantity that measures the local nonlinearity of $F$ and that also reflects the local difficulty of Newton's method. To this end, we see \eqref{eq:orinewton} as a fixed-point iteration
\begin{equation}
    \ov^{k+1} = \Phi_F(\ov^k),
\end{equation}
where
\begin{equation}\label{eq:iterfunc}
    \Phi_F(\ov)=\ov- \nabla F(\ov)^{-1} F(\ov)
\end{equation}
is known as the {\em iteration function}. According to Ostrowski's theorem \cite[\S10.1.3]{OrtegaRheinboldt1970} for general one-step iterative methods (not necessarily deriving from a Newton process), the spectral radius of the Jacobian matrix $\nabla\Phi_F$ at the solution $\ov^*$ plays a crucial role for convergence. This motivates us to consider this spectral radius for an arbitrary point $\ov$.

\begin{definition}\label{de:flatMATnum}
The Jacobian matrix of the iteration function
\begin{equation}\label{eq:flatMAT}
    \mathcal{J}_F(\ov) : = \nabla\Phi_F(\ov) = \nabla F(\ov)^{-1} \nabla^2 F(\ov) \nabla F(\ov)^{-1} F(\ov)
\end{equation}
is said to be the {\em flatness matrix} of $F$ at $\ov\in\Omega$. Its spectral radius
\begin{equation}\label{eq:flatnum}
    \rho_F(\ov) := \rho(\mathcal{J}_F(\ov))
\end{equation}
is said to be the {\em flatness number} of $F$ at $\ov\in\Omega$.
\end{definition}
The terminology should be understood as a measure of the departure from flatness. The smaller $\rho_F(\ov)$, the flatter the system at $\ov$, that is, the closer to affine, and $\rho_F\equiv 0$ when $F$ is affine. Note also that $\mathcal{J}_F(\ov^*)=0$ for every $F$, since $F(\ov^*)=0$, which is the classical reason for the superlinear convergence of Newton's method. The flatness number is therefore informative away from the solution.
The second expression in \eqref{eq:flatMAT} follows from differentiating \eqref{eq:iterfunc}, the identity terms cancelling out. For a scalar function $f : D\subseteq \RR\rightarrow \RR$, it readily gives
\begin{equation}\label{eq:flat.scalar}
    \mathcal{J}_f(u) = \frac{f'' (u)f(u)}{f'(u)^2}, \qquad
    \rho_f(u) = \bigg| \frac{f'' (u)f(u)}{f'(u)^2} \bigg|,
\end{equation}
and the value of $\mathcal{J}_f(u)$ coincides with the degree of {\em logarithmic convexity} \cite{Hernandez1991}. For a vector-valued function $F$, the factor $\nabla^2 F(\ov)$ in \eqref{eq:flatMAT} is a third-order tensor, which acts on the vector $\nabla F(\ov)^{-1} F(\ov)$ by contraction and does not have an easy representation. Despite this difficulty, the flatness matrix can be written explicitly column by column.

\begin{prop}\label{pr:flatMATexp}
For $j\in \{1,\ldots, N\}$, the $j$-th column of $\mathcal{J}_F(\ov)$ is given by
\begin{equation}\label{eq:flatMATjth}
    \nabla F(\ov)^{-1} \partial_j [\nabla F(\ov) ] \nabla F(\ov)^{-1} F(\ov),
\end{equation}
where matrix $\partial_j [\nabla F(\ov) ]$ is obtained by differentiating all entries of $\nabla F(\ov)$ with respect to $\ovc_j$.
\end{prop}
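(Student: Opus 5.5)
The plan is to compute the $j$-th column of $\mathcal{J}_F(\ov)=\nabla\Phi_F(\ov)$ directly as $\partial_j\Phi_F(\ov)$, differentiating the iteration function \eqref{eq:iterfunc} with respect to $\ovc_j$. Write $A(\ov)=\nabla F(\ov)$, which is nonsingular and $C^1$ by assumptions (i) and (iii). Then $\Phi_F(\ov)=\ov-A(\ov)^{-1}F(\ov)$. Its partial derivative with respect to $\ovc_j$ is
\begin{equation*}
\partial_j\Phi_F(\ov)=\bv_j-\partial_j\big[A(\ov)^{-1}\big]F(\ov)-A(\ov)^{-1}\partial_jF(\ov),
\end{equation*}
where $\bv_j$ denotes the $j$-th canonical basis vector. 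This uses only the product rule for a matrix-valued times a vector-valued $C^1$ function.

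The two ingredients are as follows. First, $\partial_jF(\ov)$ is by definition the $j$-th column of $A(\ov)$, so $A(\ov)^{-1}\partial_jF(\ov)=\bv_j$. This cancels the identity term, which is the cancellation mentioned after Definition~\ref{de:flatMATnum}. Second, the derivative of the inverse follows from differentiating $A(\ov)A(\ov)^{-1}=I$ with respect to $\ovc_j$. This gives $\partial_j[A^{-1}]=-A^{-1}\,\partial_j[A]\,A^{-1}$, where $\partial_j[A]$ is entrywise differentiation. The differentiability of $\ov\mapsto A(\ov)^{-1}$ is justified because inversion is smooth on invertible matrices (Cramer's rule) and $A$ is $C^1$. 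Substituting both ingredients yields
\begin{equation*}
\partial_j\Phi_F(\ov)=A(\ov)^{-1}\,\partial_j[A(\ov)]\,A(\ov)^{-1}F(\ov),
\end{equation*}
which is \eqref{eq:flatMATjth}.

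Finally, I would note consistency with \eqref{eq:flatMAT}. Contracting the third-order tensor $\nabla^2F(\ov)$ with the vector $w=\nabla F(\ov)^{-1}F(\ov)$ produces the matrix whose $(i,j)$ entry is $\sum_k\partial_j\partial_kF_i\,w_k$. Its $j$-th column is precisely $\partial_j[\nabla F]\,w$, so the column formula simply makes the tensor expression explicit.

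The only mildly delicate point is the bookkeeping of index conventions: which slot of $\nabla^2F$ is contracted, and the fact that the $j$-th column of a Jacobian is $\partial_j$ of the map. Symmetry of second derivatives for $C^2$ functions makes the choice of slot immaterial. Beyond that, the argument is routine.
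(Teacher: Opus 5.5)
Your proposal is correct and follows the paper's proof in \S\ref{sse:flatMATexp} essentially step by step. You differentiate $\Phi_F$ with respect to $u_j$, cancel $\boldsymbol{e}_j$ against $\nabla F(\boldsymbol{u})^{-1}\partial_j F(\boldsymbol{u})$, and substitute the derivative of the inverse. The only cosmetic difference is that you differentiate $AA^{-1}=I$ where the paper differentiates $\nabla F^{-1}\nabla F=I$, and both give the same identity $\partial_j[A^{-1}]=-A^{-1}\partial_j[A]A^{-1}$.
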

\begin{proof}
See \S\ref{sse:flatMATexp}.
\end{proof}

We prefer $\rho_F(\ov)$ to $\|\mathcal{J}_F(\ov)\|$ as an indicator of flatness because $\rho_F(\ov)$ does not depend on the choice of a norm and is, moreover, {\em affine-invariant} in the following sense. For a nonsingular matrix $A$ and a vector $\rhsv$, let us define the two functions
\begin{equation}
    {}_A F(\ov) = A F(\ov), \qquad F_{A,\rhsv} (\ovw) = F(A\ovw+\rhsv),
\end{equation}
which result from a linear combination of the equations of $F$ and from an affine transformation of its argument, respectively. Newton's method is renowned to be affine-invariant: (i) the iterates produced by ${}_A F$ are the same as those produced by $F$; (ii) the iterates of $F_{A,\rhsv}$ match those of $F$ through $\ov = A\ovw + \rhsv$. As a measure of nonlinearity, $\rho_F(\ov)$ is expected to enjoy the same property.

\begin{prop}\label{pr:invariance}
With the above notations, we have
\begin{subequations}\label{eq:invariance}
\begin{alignat}{2}
\mathcal{J}_{{}_A F}(\ov) &= \mathcal{J}_{F}(\ov) & \qquad \rho_{{}_A F}(\ov) &= \rho_{F}(\ov),\label{eq:invarianceLIN}\\
\mathcal{J}_{F_{A,\rhsv}}(\ovw) &= A^{-1}\mathcal{J}_{F}(\ov) A & \qquad \rho_{F_{A,\rhsv}}(\ovw) &= \rho_{F}(\ov).\label{eq:invarianceAFF}
\end{alignat}
\end{subequations}
\end{prop}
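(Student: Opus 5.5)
The most economical route avoids tensor bookkeeping altogether. It uses the characterization $\mathcal{J}_F(\ov)=\nabla\Phi_F(\ov)$ from Definition~\ref{de:flatMATnum}, together with the affine invariance of the Newton iteration function. An alternative, if one prefers not to differentiate $\Phi$, is the column formula of Proposition~\ref{pr:flatMATexp}; I describe that check at the end.

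\emph{Left transformation.} For ${}_A F = AF$ we have $\nabla({}_A F)=A\nabla F$, so
$\nabla({}_A F)^{-1}\,{}_A F = \nabla F^{-1}A^{-1}AF=\nabla F^{-1}F$. Hence $\Phi_{{}_A F}=\Phi_F$ identically on $\Omega$. Differentiating gives $\mathcal{J}_{{}_A F}(\ov)=\mathcal{J}_F(\ov)$, and equality of the spectral radii is immediate.

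\emph{Right transformation.} Set $\ov=A\ovw+\rhsv$ and $G=F_{A,\rhsv}$. The chain rule gives $\nabla G(\ovw)=\nabla F(\ov)A$, so
\begin{equation*}
\Phi_G(\ovw)=\ovw-A^{-1}\nabla F(\ov)^{-1}F(\ov)=A^{-1}\bigl(\ov-\nabla F(\ov)^{-1}F(\ov)-\rhsv\bigr)=A^{-1}\bigl(\Phi_F(A\ovw+\rhsv)-\rhsv\bigr).
\end{equation*}
Differentiating this identity with respect to $\ovw$ by the chain rule yields $\nabla\Phi_G(\ovw)=A^{-1}\nabla\Phi_F(\ov)A$, which is the matrix identity in \eqref{eq:invarianceAFF}. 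Since $A^{-1}\mathcal{J}_F(\ov)A$ is similar to $\mathcal{J}_F(\ov)$, the two matrices have the same spectrum. Therefore $\rho_G(\ovw)=\rho_F(\ov)$.

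\emph{Where care is needed.} The only delicate point is justifying that $\Phi_F$ is $C^1$ and that its Jacobian equals the formula \eqref{eq:flatMAT}. This holds because $F\in C^2$ and $\nabla F$ is nonsingular on $\Omega$; the paper asserts it right after Definition~\ref{de:flatMATnum}, so I would take it as given.

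\emph{Direct check via columns.} Using Proposition~\ref{pr:flatMATexp}, one notes two facts:
\begin{itemize}
\item $\partial_{\ovc_j}$ of $A\nabla F$ equals $A\,\partial_j\nabla F$, so the factors of $A$ cancel column by column and the left identity follows.
\item For $G$, $\partial_{\ovw_j}\nabla G=\sum_i A_{ij}\,(\partial_i\nabla F)\,A$. The $j$-th column of $\mathcal{J}_G$ is then $\sum_i A_{ij}\,A^{-1}(\text{$i$-th column of }\mathcal{J}_F)$, which is exactly the $j$-th column of $A^{-1}\mathcal{J}_F A$.
\end{itemize}
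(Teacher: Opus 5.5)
Your proof is correct and essentially the same as the paper's. The right-transformation argument, writing $\Phi_{F_{A,\rhsv}}(\ovw)=A^{-1}\Phi_F(\ov)-A^{-1}\rhsv$ and differentiating, is identical. For the left transformation, the paper uses exactly your ``direct check via columns'' through Proposition~\ref{pr:flatMATexp}; your main observation $\Phi_{{}_AF}=\Phi_F$ is an equally valid, slightly shorter route to the same identity.
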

\begin{proof}
    See \S\ref{sse:invarianceJrho}.
\end{proof}

A spectral radius may not be easy to handle, and it is in general more convenient to work with the upper bound below. Here and throughout, a matrix norm is understood to be submultiplicative. In \S\ref{ch:chemistry}, we shall use \eqref{eq:comprhonormJ} with the Frobenius norm.
\begin{prop}\label{pr:comprhonormJ}
For all matrix norm $\|\cdot\|$ and all invertible matrix $P$ of dimension $N\times N$,
\begin{equation}\label{eq:comprhonormJ}
    \rho_F(\ov) \le \|P^{-1}\mathcal{J}_F(\ov) P\|.
\end{equation}
\end{prop}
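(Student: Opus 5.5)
The plan rests on two facts: similarity invariance of the spectrum, and the classical bound of the spectral radius by any submultiplicative matrix norm. First, I will observe that $P^{-1}\mathcal{J}_F(\ov)P$ is similar to $\mathcal{J}_F(\ov)$, so the two matrices have the same characteristic polynomial and hence the same eigenvalues. By Definition~\ref{de:flatMATnum} this gives
\begin{equation*}
\rho_F(\ov)=\rho(\mathcal{J}_F(\ov))=\rho(P^{-1}\mathcal{J}_F(\ov)P).
\end{equation*}
It therefore suffices to show that $\rho(M)\le\|M\|$ for every $N\times N$ matrix $M$ and every submultiplicative norm $\|\cdot\|$. I will then apply this with $M=P^{-1}\mathcal{J}_F(\ov)P$.

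To prove $\rho(M)\le\|M\|$, I would take an eigenvalue $\lambda$ of $M$ with $|\lambda|=\rho(M)$ and an associated eigenvector $x\neq 0$. Since $\lambda$ and $x$ may be complex even when $M$ is real, I have to say how the norm acts on complex matrices. I would work with the rank-one matrix $X=x\,y^{\mathsf T}$, with $y\neq0$ chosen so that $X\neq 0$ (e.g.\ $y=\bar x$). Then $MX=\lambda X$, and submultiplicativity yields
\begin{equation*}
|\lambda|\,\|X\|=\|MX\|\le\|M\|\,\|X\|.
\end{equation*}
Dividing by $\|X\|>0$ gives the claim.

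The only delicate point is this passage to complex matrices. If the norm is only defined on real matrices, I would avoid complex arithmetic altogether and use Gelfand's formula instead. It applies to any norm on the finite-dimensional space of real $N\times N$ matrices, since all such norms are equivalent. Submultiplicativity gives $\|M^k\|\le\|M\|^k$, and Gelfand's formula gives $\rho(M)=\lim_k\|M^k\|^{1/k}$. Together these yield $\rho(M)\le\|M\|$ directly, which completes the proof.
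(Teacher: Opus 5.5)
Your proof is correct and follows the same route as the paper: similarity invariance gives $\rho_F(\ov)=\rho(P^{-1}\mathcal{J}_F(\ov)P)$, and the bound $\rho(M)\le\|M\|$ for submultiplicative norms finishes the argument. The only difference is that the paper cites this bound from Horn--Johnson (Theorem~5.6.9), whereas you prove it yourself via the rank-one eigenvector argument, with Gelfand's formula as a fallback for norms defined only on real matrices.
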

\begin{proof}
See \S\ref{sse:flatnumupper}.
\end{proof}

For the iteration function $\Phi_F$ to be a contraction in a convex neighborhood of $\ov^*$, it is sufficient that $\|\mathcal{J}_F(\ov)\| \le \gamma < 1$ uniformly in this neighborhood, for some matrix norm $\|\cdot\|$ induced by a vector norm. By Proposition~\ref{pr:comprhonormJ}, this condition implies $\rho_F(\ov) \le \gamma < 1$ in the same neighborhood.

\subsection{Preflattening}\label{sse:def.preflat}
Modifying a system in order to make it faster to solve is commonly referred to as {\em preconditioning}. As with their linear counterparts, nonlinear preconditioners fall into two kinds, depending on how the original system \eqref{eq:orisystem} is modified. We speak of {\em left-preconditioning} if the modified system keeps the same unknowns as the original one, which generalizes the linear case $M^{-1}(A\ov-\rhsv)=0$. A {\em right-preconditioning} technique, on the other hand, is based on a change of variables $\ov = \ovf(\fv)$, which turns the original system into $F\circ\ovf(\fv)=0$ while preserving the residuals of the equations. This generalizes the linear case $F(\ov) = A\ov - \rhsv$ with $\ovf(\fv) = M^{-1}\fv$.

In the linear case, the condition number $\kappa$ governs the convergence bounds of iterative solvers \cite{Saad2003}. Therefore, a linear preconditioner is designed so that $\kappa(M^{-1}A)\ll \kappa(A)$ for left-preconditioning or $\kappa(AM^{-1})\ll \kappa(A)$ for right-preconditioning. In the nonlinear case, the existing preconditioners are not designed to decrease a clearly identified quantitative target, despite their proven effectiveness. In light of the elements presented above, we propose to use the flatness number $\rho_F$, or an upper bound $\widehat{\rho}_F$ of it, as such a target, and we coin the term {\em preflattening} for any process that decreases this number. Preflattening is meant to make a system less nonlinear, and it should leave an affine system unchanged.

Below is the prototype of the right-preflattened Newton step studied in this work. Here, $\{\ovf_{\boldsymbol{\alpha}}\}_{\boldsymbol{\alpha}\in \mathcal{A}}$ designates a family of invertible parametrizations $\ov = \ovf_{\boldsymbol{\alpha}}(\fv)$, with nonsingular Jacobian matrices, that includes the identity and for which we are able to carry out the minimization \eqref{eq:minflat}. The index set $\mathcal{A}$ can be discrete or continuous, and the choice of the family strongly depends on the model under consideration. Given the current iterate $\ov^k$, we successively compute
\begin{subequations}\label{eq:abstractalgo}
\begin{alignat}{2}
\boldsymbol{\alpha}^k & = \text{arg} \textstyle\min_{\boldsymbol{\alpha}\in \mathcal{A}} \big\{ \rho_{F\circ \ovf_{\boldsymbol{\alpha}}}(\ovf_{\boldsymbol{\alpha}}^{-1}(\ov^k)) \big\}, & \ & \text{[optimal preflattening]}\label{eq:minflat}\\
\fv^k & = \ovf_{\boldsymbol{\alpha}^k}^{-1}(\ov^k), & \ & \text{[current variable]}\label{eq:realtau}\\
\fv^{k+1} & = \fv^k - [\nabla_{\fv} (F\circ \ovf_{\boldsymbol{\alpha}^k})(\fv^k)]^{-1} (F\circ \ovf_{\boldsymbol{\alpha}^k})(\fv^k)\nonumber\\
& = \fv^k - \nabla_{\fv} \ovf_{\boldsymbol{\alpha}^k}(\fv^k)^{-1} \nabla_{\ov} F(\ov^k)^{-1} F(\ov^k),\label{eq:updatetau} & \ & \text{[updated variable]}\\
\ov^{k+1} & = \ovf_{\boldsymbol{\alpha}^k}(\fv^{k+1}), & \ & \text{[primary unknown]}\label{eq:backtou}
\end{alignat}
\end{subequations}
to get the next iterate. The second line of \eqref{eq:updatetau} follows from the chain rule, since $\ovf_{\boldsymbol{\alpha}^k}(\fv^k)=\ov^k$. Note that $\ov$ is the variable of interest, while $\fv$ only works behind the scenes. Its updated value \eqref{eq:updatetau} is only used in \eqref{eq:backtou}, and it is overwritten by \eqref{eq:realtau} at the next iteration.

\section{Preflattening a scalar equation}\label{ch:scalar}
In this section, we apply the right-preflattening strategy \eqref{eq:abstractalgo} to a scalar model problem and compare it with alternative methods.
\subsection{Model}\label{sse:sca.model}
For parameters\footnote{Strict positivity of both $p$ and $q$ is needed for the switching point \eqref{scalar.switch} and for the conversions \eqref{BC.alg.scalar} to be well defined. The restriction $m>2$ guarantees that the second derivative $\fsy''(y)=pm(m-1)\{y\}^{m-2}$ of the function $\fsy$ defined in \eqref{eq:scalar.y} is continuous at $y=0$, hence that $\fsy$ is of class $C^2$ as required in \S\ref{sse:notion}. By contrast, the function $\fsx$ defined in \eqref{eq:scalar.x} is not even differentiable at $x=0$.} $(p,q) \in (\mathbb{R}_+^*)^2$, $r\in \mathbb{R}_+$ and $m>2$, consider the scalar equation
\begin{equation}\label{eq:scalar.x}
    \fsx(x):=px+q\{x\}^{1/m}-r=0,
\end{equation}
where, for any real exponent $a$, we use the signed power $\{x\}^{a}=\operatorname{sgn}(x)|x|^{a}$, with the convention $\{0\}^{a}=0$ for $a>0$. Here and below, $\mathbb{R}_+$ and $\mathbb{R}_+^*$ denote the sets of nonnegative and of positive real numbers, respectively. As $x\rightarrow 0$, the derivative $\fsx'(x)$ blows up, which can cause Newton's method to fail.

A standard remedy is to change the variable to $y=\{x\}^{1/m}$, which gives the equivalent equation
\begin{equation}\label{eq:scalar.y}
    \fsy(y):=p\{y\}^m+qy-r=0.
\end{equation}
Now, however, $\fsy'(y)$ blows up as $y\rightarrow\pm\infty$. Neither formulation is thus robust on its own, and a good method must switch between $\fsx$ and $\fsy$ depending on the regime. We therefore look for a unified formulation
\begin{equation}\label{scalar.eq}
    \fst(\fs):=pX(\fs)+qY(\fs)-r=0,
\end{equation}
where $\fs\mapsto X(\fs)$ is an increasing bijection of $\RR$ and $Y(\fs)= \{X(\fs)\}^{1/m}$, or equivalently $X(\fs)=\{Y(\fs)\}^m$. Since $\fst(\fs) = \fsx(X(\fs)) = \fsy(Y(\fs))$, problem \eqref{scalar.eq} is a general right-preconditioned version of \eqref{eq:scalar.x} or \eqref{eq:scalar.y}, and the choice of the parametrization determines the method.

\subsection{Methods}\label{sse:sca.methods}
\paragraph*{Binary Switch (BS).} In the simplest case, the family of parametrizations is indexed by $\mathcal{A}=\{0,1\}$ and consists of the two extreme choices $(X_0(\fs), Y_0(\fs)) = (\fs, \{\fs\}^{1/m})$ and $(X_1(\fs), Y_1(\fs)) = (\{\fs\}^m, \fs)$, which correspond to \eqref{eq:scalar.x} and \eqref{eq:scalar.y} respectively. Since $\fsx$ is not differentiable at $x=0$, and since both flatness numbers vanish at a solution of \eqref{eq:scalar.x}, the comparison below is understood at points $x\ne 0$ such that $\fsx(x)\ne 0$, which are the only ones of interest for choosing the next Newton step.
\begin{prop}\label{pr:rhosmaller}
    Let $\fsx$ and $\fsy$ be defined as in \eqref{eq:scalar.x} and \eqref{eq:scalar.y}. Then, for all pairs $(x,y)\in\mathbb{R}^2$ such that $y= \{x\}^{1/m}$, we have
    \begin{equation}\label{rho.scalar}
    \rho_{\fsy}(y)<\rho_{\fsx}(x) \;\; \Longleftrightarrow \;\; |y|<\left(\dfrac{q}{pm}\right)^{1/(m-1)}.
\end{equation}
\end{prop}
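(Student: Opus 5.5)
The plan is a direct computation with the scalar formula \eqref{eq:flat.scalar}, written entirely in the variable $y$. Along the pairs $y=\{x\}^{1/m}$ the two residuals coincide, $\fsx(x)=\fsy(y)$, since $\fsx(x)=\fsy(\{x\}^{1/m})$. As in the remark preceding the statement, we work at points with $x\neq 0$, i.e.\ $y\neq0$, and $\fsx(x)\neq 0$.

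So the common factor $|\fsx(x)|=|\fsy(y)|>0$ cancels from both sides. The inequality $\rho_\fsy(y)<\rho_\fsx(x)$ is therefore equivalent to
\[
\frac{|\fsy''(y)|}{\fsy'(y)^2}<\frac{|\fsx''(x)|}{\fsx'(x)^2}.
\]

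Next I compute the derivatives, using $\frac{d}{dx}|x|^a=a\{x\}^{a-1}$ and $\frac{d}{dx}\{x\}^a=a|x|^{a-1}$. For $\fsy$ this gives $\fsy'(y)=pm|y|^{m-1}+q>0$ and $\fsy''(y)=pm(m-1)\{y\}^{m-2}$. For $\fsx$ it gives $\fsx'(x)=p+\frac{q}{m}|x|^{1/m-1}$ and $|\fsx''(x)|=\frac{q}{m}\bigl(1-\frac1m\bigr)|x|^{1/m-2}$.

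The one step needing care is converting the $\fsx$-quantities to $y$ via $|x|=|y|^m$. This yields $|x|^{1/m-1}=|y|^{1-m}$, hence
\[
\fsx'(x)=\frac{\fsy'(y)}{m|y|^{m-1}},\qquad |\fsx''(x)|=\frac{q(m-1)}{m^2}|y|^{1-2m}.
\]
Substituting, the powers of $m$ and $|y|$ largely cancel:
\[
\frac{|\fsx''(x)|}{\fsx'(x)^2}=\frac{q(m-1)}{|y|\,\fsy'(y)^2},\qquad
\frac{|\fsy''(y)|}{\fsy'(y)^2}=\frac{pm(m-1)|y|^{m-2}}{\fsy'(y)^2}.
\]

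Finally, I cancel the common positive factor $(m-1)/\fsy'(y)^2$, which is legitimate since $m>2$, and multiply by $|y|>0$. The comparison then reduces to $pm|y|^{m-1}<q$, i.e.\ $|y|<(q/(pm))^{1/(m-1)}$, which is \eqref{rho.scalar}.

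I expect no real obstacle. The only points requiring care are the signed-power derivative bookkeeping and keeping track of the exclusions $y\neq 0$ and $\fsx(x)\neq0$, which are exactly the conditions that make the divisions and cancellations legitimate.
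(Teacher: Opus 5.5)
Your proposal is correct and follows the paper's proof essentially step by step. It uses the same derivative computations and the same conversion via $|x|^{1/m-1}=|y|^{1-m}$ and $|x|^{1/m-2}=|y|^{1-2m}$, giving $f'(x)=g'(y)/(m|y|^{m-1})$, and the same cancellation of a positive common factor, which reduces the comparison to $pm|y|^{m-1}<q$ under the exclusions $x\neq 0$ and $f(x)\neq 0$.
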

\begin{proof}
    See \S\ref{proof:rho.smaller}.
\end{proof}

For convenience, let us set
\begin{equation}\label{scalar.switch}
    (x_\flat,y_\flat)=\bigg(\!\bigg(\dfrac{q}{pm}\bigg)^{\!m/(m-1)}, \; \bigg(\dfrac{q}{pm}\bigg)^{\!1/(m-1)}\, \bigg)
\end{equation}
and call $(x_\flat,y_\flat)$ the {\em switching point}. Owing to \eqref{rho.scalar}, the right-preflattening framework \eqref{eq:abstractalgo} can be reformulated as follows. Given the current iterate $(x^k, y^k)$ satisfying $y^k = \{x^k\}^{1/m}$,
\begin{itemize}
    \item if $|y^k|< y_\flat$, run Newton's method on $\fsy$ to get $y^{k+1}$
    \begin{subequations}\label{eq:binaryswitch}
    \begin{equation}
        y^{k+1} = y^k - \fsy(y^k)/\fsy'(y^k),\qquad
        x^{k+1} = \{y^{k+1}\}^{m};\;
    \end{equation}
    \item if $|y^k| \ge y_\flat$, run Newton's method on $\fsx$ to get $x^{k+1}$
    \begin{equation}
      \;\;  x^{k+1} = x^k - \fsx(x^k)/\fsx'(x^k),\qquad
        y^{k+1} = \{x^{k+1}\}^{1/m}.
    \end{equation}
    \end{subequations}
\end{itemize}
We refer to algorithm \eqref{eq:binaryswitch} as {\em Binary Switch} (BS). The name underscores the dynamic switching between the two models, since at each iteration we select, among $\fsx$ and $\fsy$, the one that has the smaller flatness number at the current iterate.

\paragraph*{Brenner--Canc\`es (BC).} BS is not a pure Newton method, since the function being linearized changes from one iteration to the next, so that the usual convergence theory does not directly apply. To fix this while still exploiting \eqref{rho.scalar}, we move the switch inside the parametrization and set
\begin{subequations}
\begin{equation}
(X(\fs), Y(\fs)) =\begin{cases}
         (\{\fs\}^{m},\fs) \qquad &\text{\rm if }\, |\fs|< y_\flat,\\
         (Z(\fs), \{Z(\fs)\}^{1/m}) \qquad  &\text{\rm if } \, |\fs| \ge y_\flat,
    \end{cases} \label{BC.scalar}
\end{equation}
where
\begin{equation}
    Z(\fs) = \dfrac{q}{p}(\fs-\operatorname{sgn}(\fs)y_\flat)+\operatorname{sgn}(\fs)x_\flat.
\end{equation}
\end{subequations}
This is the Brenner--Canc\`es parametrization \cite{BrennerCances2017}, up to the choice of the switching point. The original method uses
\begin{equation}\label{eq:oldswitchpoint}
    (\widetilde{x}_\flat,\widetilde{y}_\flat) = \bigg(\!\bigg(\dfrac{1}{m}\bigg)^{\!m/(m-1)},\, \bigg(\dfrac{1}{m}\bigg)^{\!1/(m-1)}\;\bigg),
\end{equation}
for which $X'(\widetilde{y}_\flat) = 1$.

Intuitively, $\fs$ plays the role of $y$ for $|\fs|< y_\flat$ and, up to an affine map, the role of $x$ for $|\fs|\ge y_\flat$, so that BC merges the two formulations into a single variable. The functions $X$ and $Y$ are continuously differentiable (though not twice) at the switching point, which is all that Newton's method requires. BC is therefore a genuine Newton method. Starting from $(x^0,y^0)$ with $y^0 = \{x^0\}^{1/m}$,
\begin{itemize}
    \item compute the initial value
    \begin{subequations}\label{BC.alg.scalar}
    \begin{equation}
        \fs^0=\begin{cases}
            y^0,\quad &\text{\rm if }\; |y^0 | < y_\flat,\\
            \frac{p}{q} (x^0-\operatorname{sgn} (x^0) x_\flat )+\operatorname{sgn}(x^0) y_\flat,\quad &\text{\rm if }\; |y^0| \ge y_\flat;
        \end{cases}
    \end{equation}
    \item run Newton's method on $\fst$ until the stopping criterion \eqref{eq:stoppingcriterion} is met
    \begin{equation}
        \fs^{k+1} = \fs^k - \fst(\fs^k) / \fst'(\fs^k), \qquad x^{k+1} = X(\fs^{k+1}), \qquad y^{k+1} = Y(\fs^{k+1}).
    \end{equation}
    \end{subequations}
\end{itemize}

\paragraph*{Continuous Dynamic (CD).} BS uses only two parametrizations. CD instead relies on a continuous family indexed by $\alpha\in [0,1]$,
\begin{equation}\label{CD.para}
    (X_{\alpha}(\fs), Y_{\alpha}(\fs)) = \Big( \{\fs\}^{\frac{m}{(1-\alpha) m+\alpha}}, \, \{\fs\}^{\frac{1}{(1-\alpha) m+\alpha}} \Big).
\end{equation}
For $\alpha=0$, formula \eqref{CD.para} reduces to $(X_0(\fs), Y_0(\fs)) = ( \fs,\,\{\fs\}^{1/m} )$, which amounts to taking $x$ as the unknown. For $\alpha=1$, it becomes $(X_1(\fs), Y_1(\fs)) = (\{\fs\}^m,\,\fs)$, which amounts to taking $y$ as the unknown. The parametrization \eqref{CD.para} thus continuously interpolates between the two extreme choices of BS, which ensures a smooth transition. The idea is to choose $\alpha$ at each iterate so as to minimize, and in fact cancel, the flatness number of
\begin{equation}\label{eq:halphatau}
    \fst_\alpha(\fs) = pX_\alpha(\fs) + q Y_\alpha(\fs)-r.
\end{equation}
In the following statement, $\fs\ne 0$, since the right-hand side of \eqref{alpha.1D} is undefined at $x=y=0$ (see Remark~\ref{re:alphaCDzero}).

\begin{prop}\label{pr:alphaCDscalar}
    Let $\fst_\alpha$ be the function defined in \eqref{eq:halphatau}. Then, for all $(\alpha,\fs, x, y)\in [0,1]\times \mathbb{R}^3$ such that $(x,y)=(X_\alpha(\fs), Y_\alpha(\fs))$,
    \begin{equation}\label{alpha.1D}
    \fst^{\prime\prime}_\alpha(\fs) = 0 \;\;\Longleftrightarrow \;\;
    \alpha=\frac{qy}{pmx+qy}.
    \end{equation}
\end{prop}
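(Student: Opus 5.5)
We compute $\fst_\alpha''$ directly from the power-law form of the parametrization \eqref{CD.para}. Set $\beta = (1-\alpha)m+\alpha$, so that $\beta\in[1,m]$ for $\alpha\in[0,1]$ and $\beta>0$. With this notation $X_\alpha(\fs)=\{\fs\}^{m/\beta}$ and $Y_\alpha(\fs)=\{\fs\}^{1/\beta}$. We work at $\fs\ne 0$, as stipulated before the statement, so that $x,y\neq 0$ and all signed powers are smooth near $\fs$.

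The key identity for signed powers is that, for $\fs\ne 0$ and any real $a$,
\begin{equation*}
\frac{d^2}{d\fs^2}\{\fs\}^{a} = a(a-1)\{\fs\}^{a-2}.
\end{equation*}
This holds because $\{\fs\}^a=\operatorname{sgn}(\fs)|\fs|^a$, and each differentiation flips the parity of the sign factor consistently. Applying it to both terms gives
\begin{equation*}
\fst_\alpha''(\fs) = p\,\frac{m}{\beta}\Big(\frac{m}{\beta}-1\Big)\{\fs\}^{m/\beta-2} + q\,\frac{1}{\beta}\Big(\frac{1}{\beta}-1\Big)\{\fs\}^{1/\beta-2}.
\end{equation*}
Multiplying by $\beta^2\fs^2>0$ and using $\fs^2\{\fs\}^{a-2}=\{\fs\}^a$, we obtain
\begin{equation*}
\beta^2\fs^2\,\fst_\alpha''(\fs)=pm(m-\beta)\,x+q(1-\beta)\,y .
\end{equation*}
Hence $\fst_\alpha''(\fs)=0$ if and only if $pm(m-\beta)x = q(\beta-1)y$.

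Substituting $m-\beta=\alpha(m-1)$ and $\beta-1=(1-\alpha)(m-1)$, and cancelling the nonzero factor $m-1$ (recall $m>2$), the condition becomes
\begin{equation*}
\alpha\, pmx=(1-\alpha)qy, \qquad\text{i.e.}\qquad \alpha(pmx+qy)=qy .
\end{equation*}
Since $x=\{y\}^m$ has the same sign as $y$ and $p,q>0$, the quantity $pmx+qy$ has the sign of $y$ and is nonzero. Dividing by it yields \eqref{alpha.1D}, and every step above is an equivalence.

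As a consistency check, the right-hand side of \eqref{alpha.1D} lies in $(0,1)$, since $qy$ and $pmx$ have the same sign. So the equivalence is genuine on $[0,1]$ and not vacuous.

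The only delicate point is the bookkeeping with signed powers, namely the identity for the second derivative and the fact that $x$ and $y$ share a sign. The rest is routine algebra.
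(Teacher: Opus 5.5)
Your proof is correct and follows essentially the same route as the paper: differentiate the signed powers in \eqref{CD.para} twice, express $\fst_\alpha''(\fs)$ through $x/\fs^2$ and $y/\fs^2$, and divide by $pmx+qy\neq 0$, which is nonzero because $x$ and $y$ share a sign. Introducing $\beta=(1-\alpha)m+\alpha$ and multiplying by $\beta^2\fs^2$ is only a cosmetic variant of the paper's factoring out of $(m-1)/[(1-\alpha)m+\alpha]^2$.
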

\begin{proof}
    See \S\ref{proof:opt.para}.
\end{proof}
\begin{remark}\label{re:alphaCDzero}
The right-hand side of \eqref{alpha.1D} takes the indeterminate form $0/0$ at $x=y=0$. Since $x=\{y\}^m$ along the parametrization, it can be simplified into
\begin{equation}\label{eq:alphaCDsimplified}
    \frac{qy}{pmx+qy} = \frac{q}{pm|y|^{m-1}+q},
\end{equation}
an expression that is free of any indeterminacy and defines a continuous map $y\mapsto\alpha\in(0,1]$ on the whole of $\mathbb{R}$, with $\alpha=1$ if and only if $y=0$. The value $\alpha=1$ selects $(X_1(\fs),Y_1(\fs)) = (\{\fs\}^m,\fs)$ in \eqref{CD.para}, that is, Newton's method on $\fsy$. This is indeed the sound choice at $y=0$, where $\fsx'$ blows up while $\fsy'(0)=q>0$, and it is also the choice made by BS, since $0<y_\flat$. Formula \eqref{eq:alphaCDsimplified} is the one used in \eqref{CD.alg.scalar} and in the numerical experiments. Besides removing the indeterminacy, it avoids forming the sum $pmx^k+qy^k$ of two vanishingly small quantities.
\end{remark}

Owing to this result, CD can be written as follows. Given the current iterate $(x^k, y^k)$ with $y^k= \{x^k\}^{1/m}$, compute successively
\begin{subequations}\label{CD.alg.scalar}
\begin{alignat}{2}
    \alpha^k &= q/ (pm|y^k|^{m-1} + q), & \qquad &\text{[tuning weight]}\\
    \fs^k &= \{y^k\}^{(1-\alpha^k)m+\alpha^k}, & \qquad &\text{[current variable]}\\
    \fs^{k+1} & = \fs^k - \fst_{\alpha^k}(\fs^k) / \fst'_{\alpha^k}(\fs^k), & \qquad &\text{[updated variable]}\\
    x^{k+1} & = X_{\alpha^k}(\fs^{k+1}),& \qquad &\text{[primary unknown]}\\
    y^{k+1} & = Y_{\alpha^k}(\fs^{k+1}). & \qquad &\text{[primary unknown]}
\end{alignat}
\end{subequations}

\subsection{Numerical results}\label{sse:sca.results}
We fix $m = 3$ and test the four configurations of $(p,q,r)$ listed in Table~\ref{tab:cases}. In Cases 1 and 2, $p=q=1$ and only $r$ varies. In Cases 3 and 4, $p$ is taken very small or very large, and $r$ is chosen so that the solution is an integer.
\begin{table}[h]
    \centering
    \begin{tabular}{|c|c|c|c|}
     \hline
     Case & $p$ & $\;\;q\;\;$ & $r$\\
     \hline
     1 & 1 & 1 & 2\\
     \hline
     2 & 1 & 1 & 0.19\\
     \hline
     3 & 0.01 & 1 & 1.01 \\
     \hline
     4 & 1000 & 1 & 8002\\
     \hline
    \end{tabular}
    \caption{Values of $(p, q, r)$ for each test case}
    \label{tab:cases}
\end{table}

\paragraph*{Standard stopping criterion.}
We compare Newton's method on $\fsx$, Newton's method on $\fsy$, BS, BC and CD for a range of starting points, using the stopping criterion
\begin{equation}\label{eq:stoppingcriterion}
    |\fst(\fs)| < 10^{-10},
\end{equation}
with a maximum of $\mathtt{max}\_\mathtt{iter} = 100$ iterations. Since $\fst(\fs) = \fsx(X(\fs)) = \fsy(Y(\fs))$, the residual in \eqref{eq:stoppingcriterion} is the same quantity $|\fsx(x)|$ for all methods, which makes the comparison fair. In Tables~\ref{tab:pow_1}--\ref{tab:pow_4}, ``\texttt{x}'' marks non-convergence.

\begin{table}
\begin{minipage}[b]{0.45\textwidth}
\centering
\begin{tabular}{|c|c|c|c|c|c|}
\hline
$\;x^0\;$ &  $\;\fsx\;$ &  $\;\fsy\;$  & BS & BC & CD \\
\hline 
3 & 4 & 5 & 4 & 4 & 3 \\
\hline 
9 & 4 & 6 & 4 & 4 & 3 \\
\hline 
20 & 5 & 6 & 5 & 5 & 3 \\
\hline 
30 & 5 & 7 & 5 & 5 & 3 \\
\hline 
100 & 6 & 8 & 6 & 5 & 4 \\
\hline 
1000 & 6 & 9 & 6 & 6 & 4 \\
\hline
\end{tabular}
\caption{Number of iterations in Case 1}
\label{tab:pow_1}
\end{minipage}\hfill
\begin{minipage}[b]{0.45\textwidth}
\centering 
\begin{tabular}{|c|c|c|c|c|c|}
\hline$\quad x^0\quad$ &  $\;\fsx\;$ &  $\;\fsy\;$  & BS & BC & CD \\
\hline 
$-100$ & \texttt{x} & 8 & 10 & 6 & 6 \\
\hline 
$-10$ & \texttt{x} & 7 & 10 & 5 & 6 \\
\hline 
$-1$ & \texttt{x} & 5 & 8 & 4 & 5 \\
\hline 
1 & \texttt{x} & 5 & 7 & 4 & 5 \\
\hline 
100 & \texttt{x} & 9 & 9 & 6 & 6 \\
\hline 
1000 & \texttt{x} & 11 & 11 & 6 & 7 \\
\hline
\end{tabular}
\caption{Number of iterations in Case 2}
\label{tab:pow_2}
\end{minipage}\hfill
\begin{minipage}[b]{0.45\textwidth}
\centering
\begin{tabular}{|c|c|c|c|c|c|}
\hline
$x^0$ & $\;\fsx\;$ & $\;\fsy\;$  & BS & BC & CD \\
\hline 
$-100$ & \texttt{x} & 4 & 4 & 4 & 4 \\
\hline 
$-10$ & \texttt{x} & 4 & 4 & 4 & 3 \\
\hline 
$-1$ & 13 & 3 & 3 & 3 & 3 \\
\hline 
10 & \texttt{x} & 3 & 3 & 3 & 3 \\
\hline 
100 & \texttt{x} & 4 & 4 & 4 & 4 \\
\hline 
1000 & \texttt{x} & 5 & 10 & 4 & 5 \\
\hline
\end{tabular}
\caption{Number of iterations in Case 3}
\label{tab:pow_3}
\end{minipage}\hfill
\begin{minipage}[b]{0.45\textwidth}
\centering
\begin{tabular}{|c|c|c|c|c|c|}
\hline
$x^0$ & $\;\fsx\;$ & $\;\fsy\;$  & BS & BC & CD \\
\hline
$-1000$ & 3 & 20 & 3 & 3 & 2 \\
\hline
$-100$ & 3 & 18 & 3 & 3 & 2 \\
\hline
$-10$ & 3 & 7 & 3 & 3 & 2 \\
\hline
30 & 2 & 6 & 2 & 2 & 2 \\
\hline
3000 & 3 & 9 & 3 & 3 & 2 \\
\hline
$3\cdot 10^6$ & 3 & 15 & 3 & 3 & 3 \\
\hline
\end{tabular}
\caption{Number of iterations in Case 4}
\label{tab:pow_4}
\end{minipage}
\end{table}
\FloatBarrier

A few patterns stand out. Whenever Newton's methods on $\fsx$ and on $\fsy$ both converge, BS matches the better of the two, and BS still converges when Newton's method on $\fsx$ fails, as in Tables~\ref{tab:pow_2} and \ref{tab:pow_3}. In the latter situation, however, BS may need more iterations than Newton's method on $\fsy$, as in Case 2 and for $x^0=1000$ in Case 3. Neither BC nor CD ever requires more iterations than BS. Between these two methods, CD is the faster one in Case 1 (Table~\ref{tab:pow_1}), where it saves one or two iterations over BC at every starting point, and in Case 4 (Table~\ref{tab:pow_4}), where it is never slower and saves one iteration at four of the six starting points. The two methods are almost indistinguishable in Case 3 (Table~\ref{tab:pow_3}), where they differ by one iteration at two starting points, once in favor of each. BC is the better one in Case 2 (Table~\ref{tab:pow_2}), by one iteration at four of the six starting points. Minimizing the flatness number therefore pays off, but not uniformly, and the high-precision experiments below show that its effect is best seen in the asymptotic regime. Finally, Cases 1 and 2 (Tables~\ref{tab:pow_1} and \ref{tab:pow_2}) differ only by the value of $r$. In Case 2, the solution $x^*\approx 6.2\times 10^{-3}$ lies close to the point $x=0$ where $\fsx'$ blows up, and Newton's method on $\fsx$ fails from every starting point.

\paragraph*{High-precision computations.}
For a given set of parameters $(p, q, r, x^0)$, we plot $\log_{10}|\fst(\fs^k)|$ against the iteration number $k$ for each method, using the {\em mpmath} library for high-precision computations with the tolerance $ \varepsilon = 10^{-160}$.
\begin{figure}[htbp]
  \centering
  \begin{subfigure}[b]{0.49\textwidth}
    \centering
    \includegraphics[width=\textwidth]{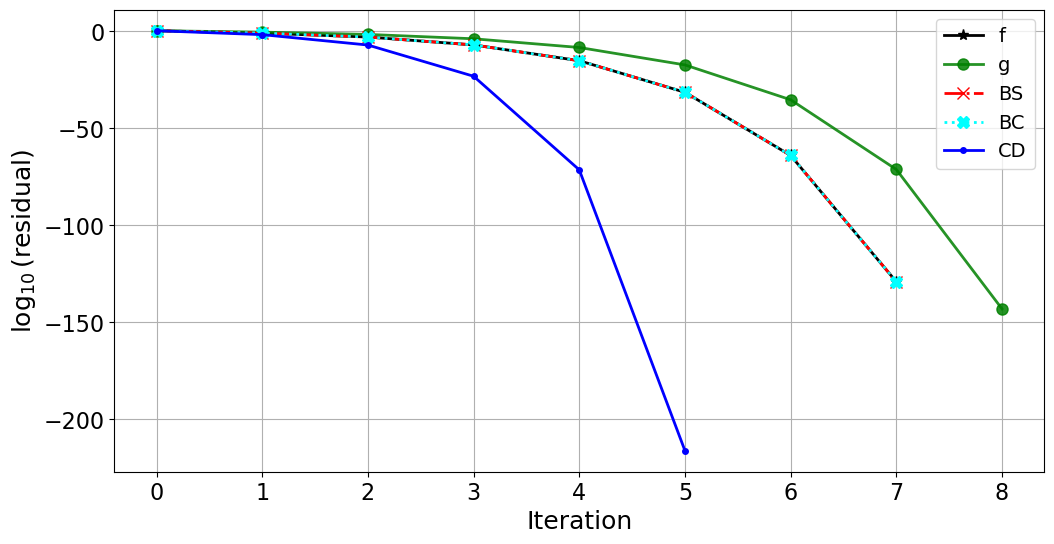}
    \caption{Case 1}
    \label{fig:spow.Case1}
  \end{subfigure}
  \hfill
  \begin{subfigure}[b]{0.49\textwidth}
    \centering
    \includegraphics[width=\textwidth]{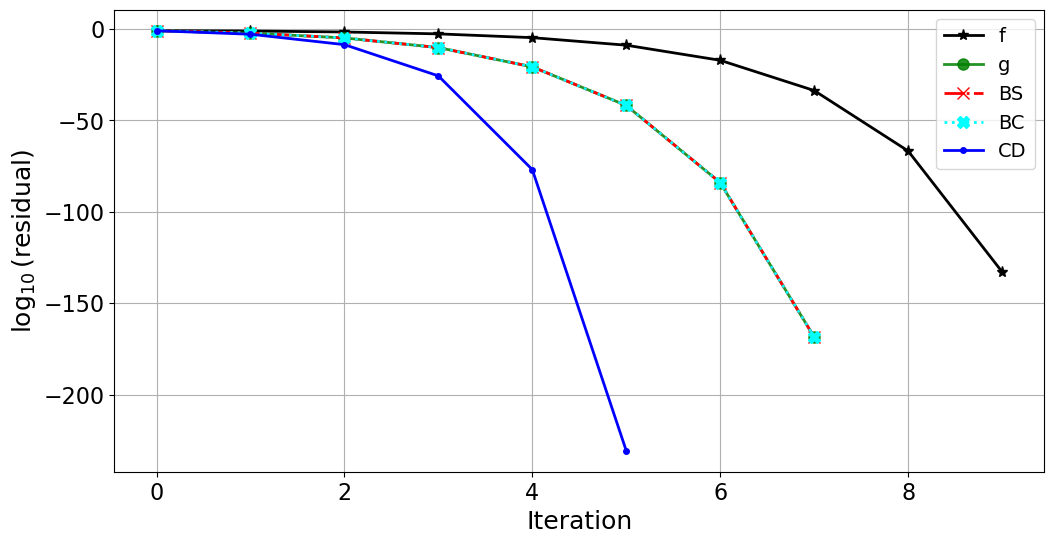}
    \caption{Case 2}
    \label{fig:spow.Case2}
  \end{subfigure}
  \vspace{0.5cm} 
  \begin{subfigure}[b]{0.49\textwidth}
    \centering
    \includegraphics[width=\textwidth]{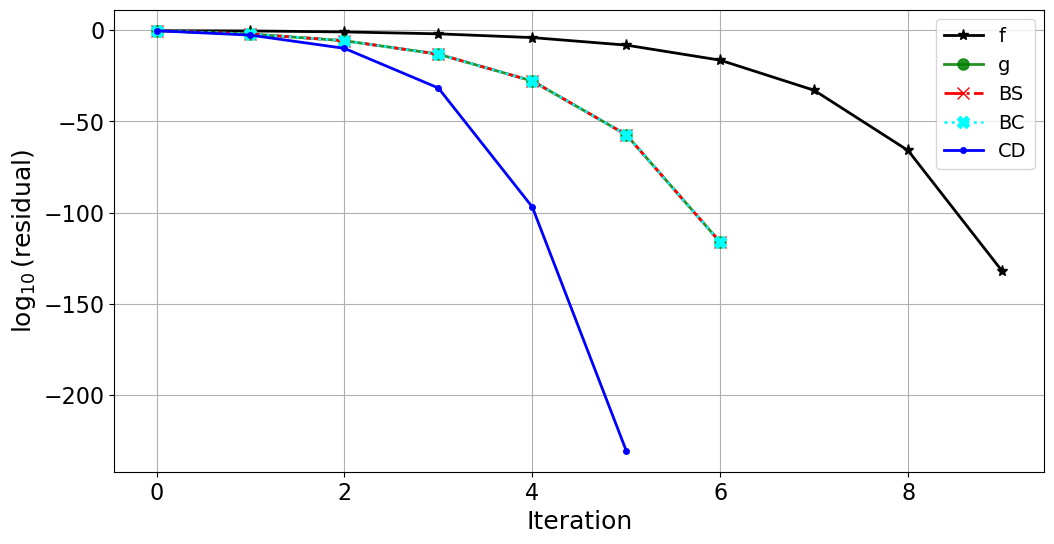}
    \caption{Case 3}
    \label{fig:spow.Case3}
  \end{subfigure}
  \hfill
  \begin{subfigure}[b]{0.49\textwidth}
    \centering
    \includegraphics[width=\textwidth]{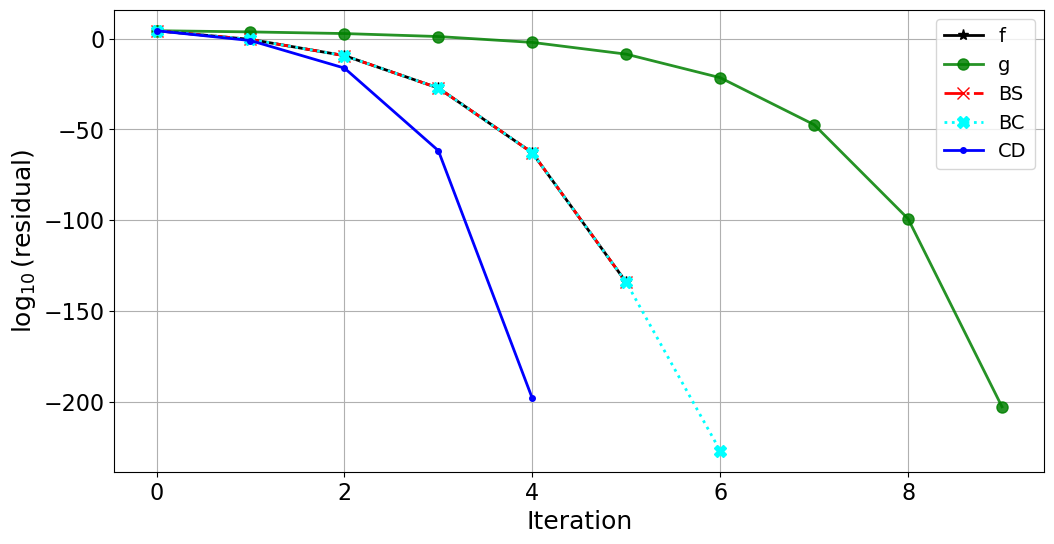}
    \caption{Case 4}
    \label{fig:spow.Case4}
  \end{subfigure}
  \caption{Evolution of residuals through iterations}
\end{figure}
\FloatBarrier
In the four cases, the curves of Newton's method on $\fsx$, BS and BC often overlap almost exactly, because these methods coincide in the corresponding regimes, as can be seen in Figures~\ref{fig:spow.Case1}--\ref{fig:spow.Case4}. The ordering of the curves reproduces the iteration counts of Tables~\ref{tab:pow_1}--\ref{tab:pow_4}. What the high-precision plots add is the behavior in the final iterations, where the CD curve is the steepest in all four cases. This asymptotic effect can be quantified as follows.

Since $\alpha^k$ cancels $\fst^{\prime\prime}_{\alpha^k}(\fs^k)$, a Taylor expansion of $h_{\alpha^k}$ around $\fs^k$ shows that the residual after one CD step behaves like $|\fst^{\prime\prime\prime}_{\alpha^k}(\fs^k)| |h_{\alpha^k}(\fs^k)|^3 / (6|\fst^{\prime}_{\alpha^k}(\fs^k)|^3)$. CD therefore converges {\em cubically}, whereas BC, being a Newton method with $\fst^{\prime\prime}\ne 0$ at the solution, converges quadratically. The residual sequences computed in Case 1 confirm this, with
\begin{align*}
    \log_{10}|\fst(\fs^{k+1})| - 3\log_{10}|\fst(\fs^k)| &\approx -2.107 \;\;\;\text{\rm for CD},\\
    \log_{10}|\fst(\fs^{k+1})| - 2\log_{10}|\fst(\fs^k)| &\approx -1.204 \;\;\;\text{\rm for BC},
\end{align*}
for sufficiently large $k$. These values agree with the theoretical asymptotic constants $\log_{10}(1/128)$ and $\log_{10}(1/16)$ obtained at the solution $x^*=1$ from the above expansion for CD and from $|\fsx^{\prime\prime}(x^*)| / (2\fsx^{\prime}(x^*)^2)$ for BC. Thus, for this scalar model, CD has a higher asymptotic order of convergence than BC. This does not always translate into a lower iteration count, since the cubic regime is reached only once the iterate is close enough to the solution. In Case 2, for instance, BC still finishes first at four of the six starting points.

\section{Preflattening a chemical system}\label{ch:chemistry}
We now turn to a vector-valued problem, the equilibrium of an ideal chemical mixture. The blow-up difficulty of the scalar case reappears here coordinate by coordinate, and we ask whether preflattening still pays off once several unknowns interact.

\subsection{Model}\label{sse:chem.model}
Nonlinear systems arise in numerous disciplines and exhibit a wide variety of difficulties\footnote{``Linear problems are all alike; every nonlinear problem is nonlinear in its own way,'' Mikhail Livschitz paraphrasing Leo Tolstoy's ``All happy families are alike; each unhappy family is unhappy in its own way.''}. This section is deliberately restricted to a specific class of problems, namely the equilibrium of ideal chemical mixtures. Such problems can be written in the abstract form
\begin{subequations}\label{eq:nnsystem}
\begin{alignat}{6}
    f_1:=& &\, a_{11}x_1 &\,+\,    &a_{12}x_2  &\,+\, &\dots &\,+\, &a_{1\nbsp}x_{\nbsp}  &\,-\, &b_1  &=0,\\
    f_2:=& &\, a_{21}x_1 &\,+\,    &a_{22}x_2  &\,+\, &\dots &\,+\, &a_{2\nbsp}x_{\nbsp}  &\,-\, &b_2  &=0,\\
    \vdots \; & & & \;\, \vdots    &  & \;\, \vdots & & \;\, \vdots & & \;\, \vdots & & \;\; \vdots \notag\\
    f_{\nbel}:=& &\, a_{\nbel 1}x_1 &\,+\,    &a_{\nbel 2}x_2  &\,+\, &\dots &\,+\, &a_{\nbel \nbsp}x_{\nbsp}  &\,-\, &b_{\nbel}  &=0,\\
    f_{\nbel+1}:=& &\, s_{11}\ln x_1    &\,+\,    &s_{12}\ln x_2     &\,+\, &\dots &\,+\, &s_{1\nbsp}\ln x_{\nbsp}    &\,-\, &t_1  &=0,\\
    f_{\nbel+2}:=& &\, s_{21}\ln x_1    &\,+\,    &s_{22}\ln x_2     &\,+\, &\dots &\,+\, &s_{2\nbsp}\ln x_{\nbsp}    &\,-\, &t_2  &=0,\\
    \vdots \; & & & \;\, \vdots   &  & \;\, \vdots & & \;\, \vdots & & \;\, \vdots & & \;\;\vdots \notag\\
    f_{\nbsp}:=& &\, s_{\nbre 1}\ln x_1    &\,+\,    &s_{\nbre 2}\ln x_2     &\,+\, &\dots &\,+\, &s_{\nbre\nbsp}\ln x_{\nbsp}    &\,-\, &t_{\nbre}  &=0,
\end{alignat}
\end{subequations}
in the unknowns $\boldsymbol{x} = (x_1,x_2,\dots, x_{\nbsp})\in (\mathbb{R}_+^*)^{\nbsp}$. For $i\in \{1,2,\dots,\nbsp\}$, $x_i$ represents the molar fraction of the $i$-th species, with the possible exception of the last component $x_\nbsp$. The first $\nbel$ equations are affine and express the material balances. The last $\nbsp - \nbel =: \nbre$ equations are nonlinear in the $x_i$'s but affine with respect to the $\ln x_i$'s, and they express the mass action law of each reaction that may occur in the system. The data $a_{ij}$ and $b_i$ are related to the so-called atomic matrix and to the feeds, while the data $s_{\ell j}$ and $t_\ell$ stem from the stoichiometric coefficients and from the reference chemical potentials. We refer the reader to \S\ref{sse:chemform} for a complete description of the variables, equations and data.

System \eqref{eq:nnsystem} can be given the condensed form
\begin{subequations}\label{eq:nnsystemcondensed}
    \begin{alignat}{3}
        A & \boldsymbol{x} & & - \, b & &= 0,\\
        S & \ln\boldsymbol{x} & & - \, t & & = 0,
    \end{alignat}
\end{subequations}
with $A \in \mathbb{R}^{\nbel\times\nbsp}$, $S\in\mathbb{R}^{\nbre\times\nbsp}$, $b \in \mathbb{R}^{\nbel}$ and $t \in \mathbb{R}^{\nbre}$, the logarithm acting componentwise. These data should not be confused with the generic matrix $A$ and vector $\rhsv$ of \S\ref{ch:notion}. Let $F = (f_1, f_2, \dots, f_{\nbsp})^{\!\top}$ denote the corresponding residual function and, for a vector $\boldsymbol{v}=(v_1,\dots,v_\nbsp)^{\!\top}$, let $\operatorname{Diag}(\boldsymbol{v})$ denote the diagonal matrix whose $i$-th diagonal entry is $v_i$. The Jacobian matrix of $F$ reads
\begin{equation}\label{eq:nablaF}
    \nabla F(\boldsymbol{x}) = \begin{bmatrix} A \\[2pt] S\operatorname{Diag}(\boldsymbol{x})^{-1} \end{bmatrix}.
\end{equation}
As $x_i \rightarrow 0^+$, the entries $s_{\ell i}/x_i$ of its $i$-th column blow up for every $\ell$ such that $s_{\ell i}\ne 0$, which may cause Newton's method to fail. Furthermore, the iterations break down as soon as one of the $x_i$'s becomes negative.

To prevent these two issues, the {\em log-trick} is commonly employed in the chemistry community. The idea is to solve, in the unknowns $\boldsymbol{y}=(y_1,y_2,\dots,y_{\nbsp})\in\mathbb{R}^{\nbsp}$,
\begin{equation}\label{eq:logtrick}
    G(\boldsymbol{y}) := \begin{bmatrix} A\exp\boldsymbol{y} - b \\[2pt] S\boldsymbol{y}-t \end{bmatrix} = 0,
\end{equation}
where $\exp$ acts componentwise. Writing $G=(g_1,g_2,\dots,g_{\nbsp})^{\!\top}$, the first $\nbel$ components $g_i$ are the material balances, now nonlinear, while the last $\nbre$ ones are the mass action laws, now affine. The log-trick therefore does not remove the nonlinearity but transfers it from one block of \eqref{eq:nnsystem} to the other. A similar difficulty then arises, since the Jacobian matrix
\begin{equation}\label{eq:nablaG}
    \nabla G(\boldsymbol{y}) = \begin{bmatrix} A\operatorname{Diag}(\exp\boldsymbol{y}) \\[2pt] S \end{bmatrix}
    = \begin{bmatrix} A\operatorname{Diag}(\boldsymbol{x}) \\[2pt] S \end{bmatrix}, \qquad \boldsymbol{x}=\exp\boldsymbol{y},
\end{equation}
has entries $a_{ij}x_j$ in its first $\nbel$ rows, and these entries blow up as $y_j\rightarrow +\infty$ whenever $a_{ij}\ne 0$.

If we are to guarantee robustness in all solution regimes, we must switch between $F$ and $G$ in a more refined way. To this end, following and extending \cite{JonvalBGCFT2025,Jonval2024}, we consider the intermediate system
\begin{equation}\label{eq:transitiontau}
    H(\boldsymbol{\fs}) := \begin{bmatrix} A\,\mathscr{X}(\boldsymbol{\fs}) - b \\[2pt]
    S\,\mathscr{Y}(\boldsymbol{\fs}) - t \end{bmatrix} = 0,
\end{equation}
in the unknowns $\boldsymbol{\fs} = (\fs_1, \fs_2, \dots,\fs_{\nbsp})$, where
$\mathscr{X}(\boldsymbol{\fs}) = (X_1(\fs_1), \dots, X_{\nbsp}(\fs_{\nbsp}))$ and
$\mathscr{Y}(\boldsymbol{\fs}) = (Y_1(\fs_1), \dots, Y_{\nbsp}(\fs_{\nbsp}))$. Writing $H=(h_1, h_2, \dots, h_{\nbsp})^{\!\top}$, the components read
\begin{equation}\label{eq:Hcomponents}
    h_i = \sum_{j=1}^{\nbsp} a_{ij} X_j(\fs_j) - b_i \;\; (1\le i\le \nbel),
    \qquad
    h_{\nbel+\ell} = \sum_{j=1}^{\nbsp} s_{\ell j} Y_j(\fs_j) - t_\ell \;\; (1\le \ell\le \nbre).
\end{equation}
The functions $\fs_i \mapsto (X_i(\fs_i),Y_i(\fs_i))$ are general parametrizations, which depend on the method under consideration. For now, we only require each $X_i$ to be increasing and twice differentiable, except possibly at isolated points, and
\begin{equation}\label{eq:logexp}
Y_i(\fs_i)= \ln X_i(\fs_i), \qquad \text{or equivalently} \qquad X_i(\fs_i) = \exp Y_i(\fs_i).
\end{equation}
Since
\begin{equation}\label{eq:HtauSyst}
    H(\boldsymbol{\fs}) = F(\mathscr{X}(\boldsymbol{\fs})) = G(\mathscr{Y}(\boldsymbol{\fs})),
\end{equation}
problem \eqref{eq:transitiontau} is a general right-preconditioned formulation of \eqref{eq:nnsystem} and \eqref{eq:logtrick}.

Furthermore, system \eqref{eq:transitiontau} is in the {\em separated variables form}, in the sense that $H$ is, up to a constant vector, a sum of functions of a single variable each,
\begin{equation}\label{eq:separevariform}
    H(\boldsymbol{\fs}) = \sum_{i=1}^{\nbsp} \mathscr{H}_i(\fs_i) - \begin{bmatrix} b\\ t \end{bmatrix}, \qquad
    \mathscr{H}_i(\fs_i) = \begin{bmatrix} X_i(\fs_i)\,A\bv_i \\[2pt] Y_i(\fs_i)\,S\bv_i \end{bmatrix},
\end{equation}
where $\bv_i$ is the $i$-th canonical basis vector of $\mathbb{R}^\nbsp$, so that $A\bv_i$ and $S\bv_i$ are the $i$-th columns of $A$ and $S$. This entails that all mixed second derivatives vanish,
\begin{equation}\label{eq:separated}
    \partial_{ij} H = 0 \qquad \text{for all } \; (i,j)\in\{1,2,\dots,\nbsp\}^2 \;\text{ with } i\ne  j,
\end{equation}
where $\partial_{ij}H$ denotes the vector with components $\partial_{ij} h_1,\dots,\partial_{ij} h_\nbsp$. A simpler formula can then be worked out for the flatness matrix $\mathcal{J}_H(\boldsymbol{\fs})$. From now on, we omit the argument $\boldsymbol{\fs}$ in various formulas whenever convenient.
\begin{lemma}\label{le:separatedform}
The flatness matrix of any twice differentiable function $H$ in the separated variables form is equal to
\begin{equation}\label{eq:separatedflatmat}
    \mathcal{J}_H = \nabla H^{-1} \mathfrak{D}^2 H \operatorname{Diag}(\nabla H^{-1} H)
\end{equation}
assuming $\nabla H^{-1}$ exists, where $(\mathfrak{D}^2 H)_{ij} = \partial_{jj} h_i$, and $\operatorname{Diag}(\boldsymbol{u})$
is the diagonal matrix whose $i$-th diagonal entry is $u_i$.
\end{lemma}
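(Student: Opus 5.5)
The plan is to start from Proposition~\ref{pr:flatMATexp}, which describes $\mathcal{J}_H$ column by column, and to use the vanishing of the mixed second derivatives \eqref{eq:separated} to simplify each column. Set $\boldsymbol{d} := \nabla H^{-1} H$, which is well defined by the invertibility assumption. By Proposition~\ref{pr:flatMATexp}, the $j$-th column of $\mathcal{J}_H$ is
\[
\nabla H^{-1}\, \partial_j[\nabla H]\, \boldsymbol{d}.
\]
The task is therefore to identify the vector $\partial_j[\nabla H]\,\boldsymbol{d}$ for each $j$.

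The entry $(i,k)$ of $\nabla H$ is $\partial_k h_i$, so the entry $(i,k)$ of $\partial_j[\nabla H]$ is $\partial_{jk} h_i$. By \eqref{eq:separated}, this vanishes whenever $k\neq j$. Hence $\partial_j[\nabla H]$ has only one nonzero column, namely the $j$-th, whose entries are $\partial_{jj}h_i$. That column is exactly the $j$-th column of $\mathfrak{D}^2 H$. Consequently
\[
\partial_j[\nabla H]\,\boldsymbol{d} = d_j\, \mathfrak{D}^2 H\, \bv_j,
\]
with $\bv_j$ the $j$-th canonical basis vector. Alternatively, one can read this off directly from \eqref{eq:separevariform}: $\nabla H$ has $j$-th column $\mathscr{H}_j'(\fs_j)$, which depends only on $\fs_j$, so differentiating in $\fs_j$ kills every other column.

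Assembling the columns, the $j$-th column of $\mathcal{J}_H$ equals $\nabla H^{-1}\mathfrak{D}^2 H\,(d_j \bv_j)$. Since $d_j\bv_j$ is the $j$-th column of $\operatorname{Diag}(\boldsymbol{d})$, this is the $j$-th column of $\nabla H^{-1}\mathfrak{D}^2 H \operatorname{Diag}(\boldsymbol{d})$. That matrix is precisely the right-hand side of \eqref{eq:separatedflatmat}, which proves the lemma.

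There is no real obstacle here. The only point requiring care is the index bookkeeping: $(\mathfrak{D}^2H)_{ij}=\partial_{jj}h_i$ must be matched to the column convention of $\partial_j[\nabla H]$ in Proposition~\ref{pr:flatMATexp}. I would also note that the argument is pointwise, so it only needs twice differentiability at the point under consideration, which is consistent with parametrizations that fail to be $C^2$ at isolated points.
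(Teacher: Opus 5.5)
Your proposal is correct and follows the paper's own proof. The paper likewise applies the column-wise formula of Proposition~\ref{pr:flatMATexp}, uses \eqref{eq:separated} to see that the $j$-th column of $\mathfrak{D}^2 H$ is the only nonzero column of $\partial_j[\nabla H]$, and assembles the resulting columns into $\nabla H^{-1}\mathfrak{D}^2 H \operatorname{Diag}(\nabla H^{-1}H)$, with the same index convention $(\mathfrak{D}^2 H)_{ij}=\partial_{jj}h_i$ that you matched.
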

\begin{proof}
    See \S\ref{sse:SeparatedForm}.
\end{proof}

Applying Lemma~\ref{le:separatedform} to system \eqref{eq:transitiontau}, we obtain a closed-form expression for the flatness matrix $\mathcal{J}_H$. To state the result, we need some more notation. Assuming $\nabla G$ nonsingular and dropping its argument $\boldsymbol{y}$ for conciseness, let $r_{\ell i}$ denote the entry of $\nabla G^{-1}$ in row $i$ and column $\nbel + \ell$, that is,
\begin{equation}\label{eq:inversenablaG}
    \nabla G^{-1} \bv_{\nbel+\ell} = (r_{\ell 1}, r_{\ell 2}, \dots, r_{\ell\nbsp})^{\!\top},
    \qquad 1\le \ell\le\nbre.
\end{equation}
Note the reversed index order, the first index of $r_{\ell i}$ referring to the column and the second one to the row. Only these
$\nbre$ columns, associated with the reaction block of \eqref{eq:nablaG}, will appear below. Let us also consider the vector of {\em normalized residuals}
\begin{equation}\label{eq:trueres}
     \NLresnv = (\NLresns_1,\NLresns_2,\dots,\NLresns_{\nbsp})^{\!\top} := \nabla G(\boldsymbol{y})^{-1} G(\boldsymbol{y}) = \nabla G(\boldsymbol{y})^{-1} H(\boldsymbol{\fs}), \qquad \boldsymbol{y}=\mathscr{Y}(\boldsymbol{\fs}),
\end{equation}
which, up to sign, is the Newton increment of the log-trick formulation \eqref{eq:logtrick}.
Note that $\nabla G$, $\nabla G^{-1}$ and $\NLresnv$ take the same values at fixed $\boldsymbol{x}$ (or $\boldsymbol{y}$), whatever the parametrization. Finally, let
\begin{equation}\label{eq:PiProj}
    \Pi := \sum_{\ell=1}^{\nbre} \bv_{\nbel+\ell}\, \bv_{\nbel+\ell}^{\!\top}
\end{equation}
be the orthogonal projector onto the reaction block, that is, onto the span of the last
$\nbre$ canonical basis vectors of $\mathbb{R}^{\nbsp}$, and let $\mathscr{Y}' := (Y_1'(\fs_1),\dots,Y_\nbsp'(\fs_\nbsp))^{\!\top}$ and $\mathscr{Y}'' := (Y_1''(\fs_1),\dots,Y_\nbsp''(\fs_\nbsp))^{\!\top}$. We are now ready to write down the flatness matrix of system \eqref{eq:transitiontau}.

\begin{theorem}\label{th:flatmat00}
For all parametrizations $\fs_i \mapsto (X_i(\fs_i), Y_i(\fs_i))$ satisfying \eqref{eq:logexp}, the flatness matrix of system \eqref{eq:transitiontau} is given by
\begin{equation}\label{flat.mat00}
    \mathcal{J}_H = \Lambda^{-1}\big[\operatorname{Diag}(\boldsymbol{\aleph})
    - \nabla G^{-1}\Pi\,\nabla G\big]\,\Lambda\,\operatorname{Diag}(\NLresnv),
\end{equation}
where $\Lambda := \operatorname{Diag}(\mathscr{Y}')$, the projector $\Pi$ is defined in \eqref{eq:PiProj}, the vector $\NLresnv$ of normalized residuals in \eqref{eq:trueres}, and
\begin{equation}\label{eq:alephtaui}
    \aleph_i := 1+\dfrac{Y_i^{\prime\prime}}{(Y_i^{\prime})^2} = \frac{X''_i X_i}{(X'_i)^2}
    = \mathcal{J}_{X_i},
    \qquad \boldsymbol{\aleph}:=(\aleph_1,\dots,\aleph_{\nbsp})^{\!\top}.
\end{equation}
Moreover, $(\nabla G^{-1}\Pi\,\nabla G)_{ij} = \sum_{\ell=1}^{\nbre} r_{\ell i}s_{\ell j}$.
\end{theorem}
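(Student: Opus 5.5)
We start from Lemma~\ref{le:separatedform}, which applies because $H$ is in separated variables form \eqref{eq:separevariform}. It gives $\mathcal{J}_H = \nabla H^{-1}\mathfrak{D}^2 H \operatorname{Diag}(\nabla H^{-1}H)$. The plan is to express each of the three factors through $\nabla G$, $\Lambda$ and $\NLresnv$.

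First, by the chain rule applied to $H = G\circ\mathscr{Y}$ in \eqref{eq:HtauSyst}, and since $\mathscr{Y}$ acts componentwise,
\begin{equation*}
\nabla H = \nabla G\,\Lambda, \qquad \nabla H^{-1} = \Lambda^{-1}\nabla G^{-1}.
\end{equation*}
Consequently $\nabla H^{-1}H = \Lambda^{-1}\nabla G^{-1}G = \Lambda^{-1}\NLresnv$, hence $\operatorname{Diag}(\nabla H^{-1}H) = \Lambda^{-1}\operatorname{Diag}(\NLresnv)$, diagonal matrices commuting.

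Second, we compute $\mathfrak{D}^2H$ column by column from \eqref{eq:Hcomponents}. Column $j$ has entries $a_{ij}X_j''$ in the balance block and $s_{\ell j}Y_j''$ in the reaction block. Using $X_j=\exp Y_j$, we have $X_j' = X_jY_j'$ and $X_j'' = X_j\big((Y_j')^2+Y_j''\big) = X_j(Y_j')^2\aleph_j$. Note that $\aleph_j = 1 + Y_j''/(Y_j')^2$ also equals $X_j''X_j/(X_j')^2$ by this same identity, which verifies \eqref{eq:alephtaui}. In the reaction block, $Y_j'' = (Y_j')^2(\aleph_j-1)$.

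Comparing with \eqref{eq:nablaG}, whose $j$-th column is $(a_{ij}x_j;\,s_{\ell j})$, we obtain the matrix identity
\begin{equation*}
\mathfrak{D}^2H = \big[(I-\Pi)\nabla G\operatorname{Diag}(\boldsymbol{\aleph}) + \Pi\nabla G\operatorname{Diag}(\boldsymbol{\aleph}-\mathbf{1})\big]\Lambda^2 = \big[\nabla G\operatorname{Diag}(\boldsymbol{\aleph}) - \Pi\nabla G\big]\Lambda^2 .
\end{equation*}
This is the key step. The observation is that the balance rows scale by $\aleph_j$ while the reaction rows scale by $\aleph_j-1$, so the difference is exactly the reaction block of $\nabla G$, which is $\Pi\nabla G$.

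Assembling the three factors:
\begin{equation*}
\mathcal{J}_H = \Lambda^{-1}\nabla G^{-1}\big[\nabla G\operatorname{Diag}(\boldsymbol{\aleph}) - \Pi\nabla G\big]\Lambda^2\Lambda^{-1}\operatorname{Diag}(\NLresnv) = \Lambda^{-1}\big[\operatorname{Diag}(\boldsymbol{\aleph}) - \nabla G^{-1}\Pi\nabla G\big]\Lambda\operatorname{Diag}(\NLresnv),
\end{equation*}
which is \eqref{flat.mat00}.

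For the final claim, $\Pi\nabla G$ keeps only the rows $\nbel+\ell$, whose $(\nbel+\ell,j)$ entry is $s_{\ell j}$. Multiplying on the left by $\nabla G^{-1}$ involves only the columns $\nbel+\ell$ of $\nabla G^{-1}$, whose row-$i$ entries are $r_{\ell i}$ by \eqref{eq:inversenablaG}. This gives $\sum_\ell r_{\ell i}s_{\ell j}$.

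The only delicate points are bookkeeping. One is obtaining the block identity for $\mathfrak{D}^2H$ with the right $\Lambda^2$ factor. The other is the invertibility of $\Lambda$, i.e.\ $Y_i'\neq0$, which holds since $X_i$ is increasing with nonsingular Jacobian, away from the isolated points where differentiability is not assumed.
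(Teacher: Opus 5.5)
Your proposal is correct and follows essentially the same route as the paper: you apply Lemma~\ref{le:separatedform}, use $\nabla H = \nabla G\,\Lambda$ to express $\nabla H^{-1}$ and $\nabla H^{-1}H = \Lambda^{-1}\boldsymbol{\delta}$, and split $\mathfrak{D}^2H$ into its balance and reaction row blocks using $X_j'' = x_j\big(Y_j''+(Y_j')^2\big)$. The only cosmetic difference is that you absorb $\mathscr{Y}''$ into $\boldsymbol{\aleph}$ immediately, writing $\mathfrak{D}^2H = [\nabla G\operatorname{Diag}(\boldsymbol{\aleph}) - \Pi\nabla G]\Lambda^2$, whereas the paper writes $\mathfrak{D}^2H = \nabla G\operatorname{Diag}(\mathscr{Y}'') + (I-\Pi)\nabla G\,\Lambda^2$ and recognizes $\boldsymbol{\aleph}$ only after assembling the product.
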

\begin{proof}
    See \S\ref{sse:flatchemgen}.
\end{proof}
\begin{remark}\label{rm:cancelcolumn}
Right-multiplication by $\operatorname{Diag}(\NLresnv)$ in \eqref{flat.mat00} implies that the $j$-th column of $\mathcal{J}_H$ contains the $j$-th normalized residual $\NLresns_j$ as a multiplicative factor. Therefore, if $\NLresns_j=0$, the whole $j$-th column of $\mathcal{J}_H$ vanishes. This will be useful in \S\ref{sse:chem.elim}--\S\ref{sse:chem.badchoice}.
\end{remark}
Formula \eqref{flat.mat00} also isolates the dependence on the parametrization, which enters only through $\boldsymbol{\aleph}$ and $\Lambda$, since at fixed $\boldsymbol{x}$ (or $\boldsymbol{y}$) the matrix $\nabla G^{-1}\Pi\nabla G$ and the vector $\NLresnv$ do not depend on it. Conjugating by $\Lambda$ removes the ratios $Y_j'/Y_i'$ and leaves
\begin{equation}\label{eq:Pm1JhP}
    \Lambda\,\mathcal{J}_H\,\Lambda^{-1}
    = \big[\operatorname{Diag}(\boldsymbol{\aleph}) - \nabla G^{-1}\Pi\,\nabla G\big]
      \operatorname{Diag}(\NLresnv),
\end{equation}
in which $\boldsymbol{\aleph}$ is the only quantity that depends on the parametrization. The Frobenius norm of this matrix supplies an upper bound of the flatness number $\rho_H$ that is easy to compute and easy to minimize. In the sequel, we refer to this upper bound as the {\em flatness estimate}.

\begin{prop}\label{pr:flatupperbound}
The flatness number of system \eqref{eq:transitiontau} is bounded from above by
\begin{equation}\label{eq:flatupperbound}
    \rho_H \le \| \Lambda\,\mathcal{J}_H\,\Lambda^{-1} \|_{\mathrm{Frob}}  =:  \widehat{\rho}_H.
\end{equation}
\end{prop}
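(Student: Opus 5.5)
This is a direct application of Proposition~\ref{pr:comprhonormJ} with the specific choice $P = \Lambda^{-1}$ and the Frobenius norm. Concretely, that proposition gives
\begin{equation*}
\rho_H \le \|P^{-1}\mathcal{J}_H P\|
\end{equation*}
for every submultiplicative matrix norm and every invertible $P$. Taking $P=\Lambda^{-1}$ yields $P^{-1}\mathcal{J}_H P = \Lambda\,\mathcal{J}_H\,\Lambda^{-1}$, which is exactly the matrix appearing in \eqref{eq:flatupperbound}.

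The steps, in order, are as follows.

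First, I would check that $\Lambda = \operatorname{Diag}(\mathscr{Y}')$ is invertible. Since each $X_i$ is increasing with $X_i=\exp Y_i$, we have $Y_i' = X_i'/X_i$. At points where the parametrization is nondegenerate, $X_i'\neq 0$, so $Y_i'\neq 0$. This is also implicitly required by Theorem~\ref{th:flatmat00}, where $\Lambda^{-1}$ appears. Equivalently, $\nabla H = \nabla G\,\Lambda$ is nonsingular, so $\Lambda$ must be too.

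Second, I would recall that the Frobenius norm is submultiplicative. This follows from Cauchy--Schwarz: $\|MN\|_{\mathrm{Frob}}\le\|M\|_{\mathrm{Frob}}\|N\|_{\mathrm{Frob}}$. It therefore qualifies as a matrix norm in the sense of Proposition~\ref{pr:comprhonormJ}. If one prefers to avoid that proposition, the argument can be done directly. Take an eigenpair $(\lambda,v)$ of $\mathcal{J}_H$ with $|\lambda|=\rho_H$. Then $w=\Lambda v$ is an eigenvector of $\Lambda\mathcal{J}_H\Lambda^{-1}$ with the same eigenvalue. Apply the matrix to the rank-one matrix $w w^{*}$ and use submultiplicativity to get $|\lambda|\,\|ww^*\|\le\|\Lambda\mathcal{J}_H\Lambda^{-1}\|\,\|ww^*\|$.

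Third, I would note the similarity invariance of the spectrum, $\rho(\Lambda\mathcal{J}_H\Lambda^{-1})=\rho(\mathcal{J}_H)=\rho_H$. I would also point out, via \eqref{eq:Pm1JhP}, that the bounded quantity equals $\|[\operatorname{Diag}(\boldsymbol{\aleph})-\nabla G^{-1}\Pi\nabla G]\operatorname{Diag}(\NLresnv)\|_{\mathrm{Frob}}$, which justifies calling it easy to compute.

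There is no real obstacle here. The only point needing care is the invertibility of $\Lambda$, in particular at the isolated points where $X_i$ may fail to be twice differentiable. There I would restrict the statement to points where Theorem~\ref{th:flatmat00} holds.
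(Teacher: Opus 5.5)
Your proposal is correct and matches the paper's proof, which is exactly the one-line application of Proposition~\ref{pr:comprhonormJ} with the Frobenius norm and $P=\Lambda^{-1}$, giving $P^{-1}\mathcal{J}_H P=\Lambda\,\mathcal{J}_H\,\Lambda^{-1}$. Your extra checks are sound but supplementary: invertibility of $\Lambda$ (implied by the standing nonsingularity of $\nabla H=\nabla G\,\Lambda$), submultiplicativity of the Frobenius norm, and the direct eigenvector argument.
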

\begin{proof}
Apply Proposition~\ref{pr:comprhonormJ} with the Frobenius norm and $P:=\Lambda^{-1}$, for which $P^{-1}\mathcal{J}_H P = \Lambda\,\mathcal{J}_H\,\Lambda^{-1}$.
\end{proof}

\subsection{Methods}\label{sse:chem.methods}
For each species, we may either keep $x_i$ or switch to $y_i=\ln x_i$, which gives $2^{\nbsp}$ possible Newton methods. Among these static choices, the fully log-tricked formulation \eqref{eq:logtrick} is the one most widely used by chemists, and exploring all $2^{\nbsp}$ combinations would be prohibitively expensive. We therefore build on \eqref{eq:logtrick} and consider the following methods.

BC applies the same scalar switch to every species, that is, $X_i=\textsf{X}$ and $Y_i=\textsf{Y}$ for all $i$ in \eqref{eq:transitiontau}, with
\begin{equation}\label{eq:bcsystem}
    (\textsf{X}(\fs), \textsf{Y}(\fs)) = \begin{cases}
        (\exp\fs, \fs) &\quad \text{if}\;\fs <0, \\
        (\fs+1,\ln(\fs+1)) &\quad \text{if}\; \fs\ge 0.
    \end{cases}
\end{equation}
As long as the iterates $\boldsymbol{x}^k=\exp\boldsymbol{y}^k$ of Newton's method on \eqref{eq:logtrick} remain in $(0,1)^\nbsp$, which is where molar fractions lie, BC produces exactly the same iterates, the affine branch of \eqref{eq:bcsystem} only acting as a safeguard against overshooting beyond $1$. This is why BC is regarded as the traditional method in \S\ref{sse:chem.results}.

CD assigns each species its own weight $\alpha_i \in [0,1]$ through
\begin{equation}\label{eq:CD33para}
        \textsf{X}_{\alpha}(\fs) = \left(1 + (1-\alpha) \fs\right)^{\frac{1}{1-\alpha}}, \qquad
        \textsf{Y}_{\alpha}(\fs) = \dfrac{1}{1-\alpha} \ln\left( 1 + (1-\alpha) \fs\right),
\end{equation}
defined for $\fs > -1/(1-\alpha)$ when $\alpha<1$, the values at $\alpha=1$ being set by continuity to $\textsf{X}_{1}(\fs) = \exp\fs$ and $\textsf{Y}_{1}(\fs)=\fs$, which are the limits obtained as $\alpha\to 1^{-}$. The weight $\alpha=0$ amounts to taking $x=1+\fs$ as the unknown, and $\alpha=1$ to taking $y=\ln x$. CD gathers these weights into the overall parametrization
\begin{subequations}\label{eq:overallparam}
\begin{align}
    \mathscr{X}_{\boldsymbol{\alpha}}(\fv) &= (\textsf{X}_{\alpha_1}(\tau_1), \textsf{X}_{\alpha_2}(\tau_2), \ldots, \textsf{X}_{\alpha_{\nbsp}}(\tau_{\nbsp})), \\
    \mathscr{Y}_{\boldsymbol{\alpha}}(\fv) &= (\textsf{Y}_{\alpha_1}(\tau_1), \textsf{Y}_{\alpha_2}(\tau_2),\dots, \textsf{Y}_{\alpha_{\nbsp}}(\tau_{\nbsp})),
\end{align}
\end{subequations}
indexed by $\boldsymbol{\alpha} = (\alpha_1,\alpha_2,\dots,\alpha_{\nbsp})\in \mathcal{A} = [0,1]^{\nbsp}$. The sans-serif letters $\textsf{X}$ and $\textsf{Y}$ denote single-species parametrizations shared by all species, namely the pair $\textsf{X},\textsf{Y}$ of BC and the pair $\textsf{X}_\alpha,\textsf{Y}_\alpha$ of CD, which carries the weight $\alpha$ as a subscript. The italic $X_i,Y_i$ of \eqref{eq:transitiontau} carry the species index instead, and for CD they are given by $X_i=\textsf{X}_{\alpha_i}$ and $Y_i=\textsf{Y}_{\alpha_i}$. Note also that the italic $X_\alpha,Y_\alpha$ of \eqref{CD.para} were indexed by a weight, since no species index was needed in \S\ref{ch:scalar}. Setting $H_{\boldsymbol{\alpha}}(\boldsymbol{\fs}) = F(\mathscr{X}_{\boldsymbol{\alpha}}(\boldsymbol{\fs})) = G(\mathscr{Y}_{\boldsymbol{\alpha}}(\boldsymbol{\fs}))$, we apply the right-preflattening strategy \eqref{eq:abstractalgo} to obtain the CD method. Given the current iterate $\boldsymbol{x}^k = (x_1^k,x_2^k,\dots, x_{\nbsp}^k)$, we compute
\begin{subequations}\label{CD.alg.system}
\begin{alignat}{2}
    \boldsymbol{\alpha}^k &= \text{arg} \textstyle\min_{\boldsymbol{\alpha}\in \mathcal{A}} \big\{ \rho_{H_{\boldsymbol{\alpha}}}(\mathscr{X}_{\boldsymbol{\alpha}}^{-1}(\boldsymbol{x}^k)) \big\}, & \qquad &\text{[tuning weights]}\label{eq:CDoptimstep}\\
    \fs^k_i &=
    \begin{cases}
        ((x_i^k)^{1-\alpha_i^k}-1)/(1-\alpha_i^k) &\;\; \text{\rm if }\, \alpha_i^k<1,\\
        \ln x_i^k &\;\; \text{\rm if }\, \alpha_i^k=1,
    \end{cases} &
    \qquad & \text{[current variable]}\label{eq:CDconvert}\\
    \fv^{k+1} & = \fv^k - \nabla H_{\boldsymbol{\alpha}^k}(\fv^k)^{-1}H_{\boldsymbol{\alpha}^k}(\fv^k), & \qquad &\text{[updated variable]}\label{eq:CDupdate}\\
    \boldsymbol{x}^{k+1} & = \mathscr{X}_{\boldsymbol{\alpha}^k}(\fv^{k+1}).& \qquad &\text{[primary unknown]}
\end{alignat}
\end{subequations}

The minimization step \eqref{eq:CDoptimstep} is the most delicate one. With the exact flatness number $\rho_{H_{\boldsymbol{\alpha}}}$, it can be carried out neither analytically nor numerically at a reasonable cost. With the upper bound \eqref{eq:flatupperbound} instead, it has the explicit solution given in Proposition~\ref{prop:CDoptimalpha} below, and this is the version used in practice. The second line of \eqref{eq:CDconvert} is the limit of the first one as $\alpha_i^k\to1^{-}$, and it inverts $\textsf{X}_{\alpha_i^k}(\fs)=\exp\fs$, the value taken at $\alpha_i^k=1$ by the convention adopted in \eqref{eq:CD33para}. In Proposition~\ref{prop:CDoptimalpha}, as in the proof of Proposition~\ref{pr:flatupperbound}, $P=\Lambda^{-1}=\operatorname{Diag}(\mathscr{Y}_{\boldsymbol{\alpha}}')^{-1}$.

\begin{prop}\label{prop:CDoptimalpha}
At a fixed $\boldsymbol{x}\in (\mathbb{R}_+^*)^\nbsp$, the upper bound $\widehat{\rho}_{H_{\boldsymbol{\alpha}}} = \|P^{-1} \mathcal{J}_{H_{\boldsymbol{\alpha}}}P \|_{\mathrm{Frob}}$ 
of the flatness number achieves its minimum over $(\boldsymbol{\alpha}, \boldsymbol{\fs} )\in [0,1]^\nbsp\times\mathbb{R}^\nbsp$ such that $\boldsymbol{x} = \mathscr{X}_{\boldsymbol{\alpha}}(\fv)$ at
\begin{equation}\label{form:alpha.optim}
    \alpha_i=\operatorname{Proj}_{[0,1]} \bigg( \sum_{\ell=1}^{\nbre}r_{\ell i} s_{\ell i} \bigg),
\end{equation}
where $\operatorname{Proj}_{[0,1]}$ stands for the projection operator on $[0,1]$.
\end{prop}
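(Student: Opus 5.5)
Using \eqref{eq:Pm1JhP} and Proposition~\ref{pr:flatupperbound}, the flatness estimate reads
\[
\widehat{\rho}_{H_{\boldsymbol{\alpha}}}^{\,2} = \big\| \big[\operatorname{Diag}(\boldsymbol{\aleph}) - M\big]\operatorname{Diag}(\NLresnv)\big\|_{\mathrm{Frob}}^2, \qquad M := \nabla G^{-1}\Pi\,\nabla G .
\]
At fixed $\boldsymbol{x}$, the matrix $M$ and the vector $\NLresnv$ do not depend on $(\boldsymbol{\alpha},\fv)$, as noted after Remark~\ref{rm:cancelcolumn}. By Theorem~\ref{th:flatmat00}, $M_{ij} = \sum_{\ell} r_{\ell i}s_{\ell j}$. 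Expanding the Frobenius norm entrywise gives
\[
\widehat{\rho}_{H_{\boldsymbol{\alpha}}}^{\,2} = \sum_{j=1}^{\nbsp} \NLresns_j^2 \Big[ (\aleph_j - M_{jj})^2 + \sum_{i\ne j} M_{ij}^2 \Big].
\]
The off-diagonal terms are constants. Hence the only parameter-dependent part is $\sum_j \NLresns_j^2(\aleph_j - M_{jj})^2$, which decouples species by species. So the plan is to minimize each $(\aleph_j - M_{jj})^2$ separately over the admissible $(\alpha_j,\fs_j)$ with $\textsf{X}_{\alpha_j}(\fs_j) = x_j$. If $\NLresns_j = 0$, any $\alpha_j$ is a minimizer (Remark~\ref{rm:cancelcolumn}), and the stated choice is in particular optimal.

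The key step is computing $\aleph_j = \mathcal{J}_{\textsf{X}_{\alpha}}$ for the CD family \eqref{eq:CD33para}. Write $w = 1 + (1-\alpha)\fs$ and $\textsf{X}_\alpha = w^{1/(1-\alpha)}$. Then
\[
\textsf{X}_\alpha' = w^{\alpha/(1-\alpha)}, \qquad \textsf{X}_\alpha'' = \alpha\, w^{(2\alpha-1)/(1-\alpha)},
\]
so
\[
\aleph = \frac{\textsf{X}''\textsf{X}}{(\textsf{X}')^2} = \alpha\, w^{\frac{2\alpha-1+1-2\alpha}{1-\alpha}} = \alpha .
\]
The case $\alpha = 1$ gives $\aleph = 1$ directly from $\exp$. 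Equivalently, $\textsf{Y}_\alpha' = 1/w$ and $\textsf{Y}_\alpha'' = -(1-\alpha)/w^2$ give $1 + \textsf{Y}''/(\textsf{Y}')^2 = \alpha$. Thus $\aleph_j = \alpha_j$ identically, independent of $\fs_j$. The constraint $\boldsymbol{x} = \mathscr{X}_{\boldsymbol{\alpha}}(\fv)$ then only determines $\fs_j$ via \eqref{eq:CDconvert}, which is always solvable for $x_j > 0$, and it places no restriction on $\alpha_j \in [0,1]$.

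It then remains to minimize the convex quadratic $(\alpha_j - M_{jj})^2$ over $\alpha_j \in [0,1]$. Its minimizer is the projection of $M_{jj} = \sum_\ell r_{\ell j}s_{\ell j}$ onto $[0,1]$, which gives \eqref{form:alpha.optim}.

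The main subtlety I expect is justifying that $\Lambda$, $\nabla G^{-1}$ and $\NLresnv$ are genuinely parametrization-independent at fixed $\boldsymbol{x}$. For $\NLresnv$ and $\nabla G^{-1}$ this is stated in the paper. For $\Lambda$ it is not needed, since it has been conjugated away in $\widehat{\rho}$. A second point is the endpoint $\alpha = 1$, handled by continuity or by direct computation. The computation $\aleph = \alpha$ is the crux and is routine.
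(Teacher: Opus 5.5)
Your proof is correct and follows essentially the same route as the paper. You show that $\aleph_i=\alpha_i$ identically for the CD family \eqref{eq:CD33para}, expand the squared Frobenius norm of \eqref{eq:Pm1JhP} so that only the diagonal terms $(\alpha_i-\sum_{\ell} r_{\ell i}s_{\ell i})^2\NLresns_i^2$ depend on $\boldsymbol{\alpha}$, and minimize each separated convex quadratic by projecting onto $[0,1]$, exactly as the paper does; your extra remarks (the constraint $\boldsymbol{x}=\mathscr{X}_{\boldsymbol{\alpha}}(\fv)$ fixes $\fv$ without restricting $\boldsymbol{\alpha}$, and the minimizer is non-unique when some $\NLresns_j=0$) are consistent with, and slightly more explicit than, the paper's argument.
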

\begin{proof}
    See \S\ref{sse:minupperflatchem}.
\end{proof}

BC is robust, but its switching point is fixed and not tuned to minimize flatness. CD accelerates convergence near the solution, but it can diverge far from it. The {\em Hybrid} method combines the two. At each iterate, it computes $\boldsymbol{\alpha}^k$ from \eqref{form:alpha.optim}, then performs a BC step if $\widehat{\rho}_{H_{\boldsymbol{\alpha}^k}}>\mathfrak{a}$ and a CD step otherwise, the threshold being fixed to $\mathfrak{a}=0.5$ throughout. It also falls back to a BC step whenever the CD update leaves the domain of definition of \eqref{eq:CD33para}.

On the NaCl and Seawater test cases of \S\ref{sse:chem.results}, the gain brought by Hybrid over BC is modest, at most three iterations, and even zero at one starting point for NaCl and at two for Seawater. This is much less decisive than in the scalar case of \S\ref{ch:scalar}. Only on the smallest test case, H$_2$O, does the gain become substantial, Hybrid more than halving the BC count at five of the six starting points of Table~\ref{tab:res.H2O.outer} for the value $\nu=10^{-7}$ of the charge imbalance parameter \eqref{eq:nu.def}.

\subsection{Elimination as a preconditioner}\label{sse:chem.elim}
In many chemical systems, convergence is slowed down by {\em unbalanced nonlinearities}, that is, by a small subset of unknowns whose scale differs by orders of magnitude from that of the others\footnote{In water dissociation, $x_{\mathrm{H_2O}}$ is close to $1$ while $x_{\mathrm{H^+}}$ and $x_{\mathrm{OH^-}}$ can be smaller by six orders of magnitude or more.}. Lanzkron et al.~\cite{LanzkronRoseWilkes1996} proposed to eliminate these degrees of freedom directly. Let the index set $\{1,\ldots,\nbsp\}$ be partitioned into a ``good'' subset and a ``bad'' subset $\mathcal B$, whose selection is postponed to \S\ref{sse:chem.badchoice}. We split the unknowns accordingly into $\boldsymbol{\fs} = (\boldsymbol{\fs}_g, \boldsymbol{\fs}_b)$, with $\boldsymbol\fs_b=(\fs_j)_{j\in\mathcal B}$, and the equations into $H = (H_g, H_b)$. More generally, the subscript $b$ attached to a vector of $\mathbb{R}^{\nbsp}$, as in $\boldsymbol{v}_b$ or $\{\boldsymbol{v}\}_b$, designates its subvector indexed by $\mathcal{B}$. If $H_b(\boldsymbol{\fs}_g, \boldsymbol{\fs}_b) = 0$ has a unique solution $\boldsymbol{\fs}_b = \aef_b(\boldsymbol{\fs}_g)$ for every $\boldsymbol{\fs}_g$, substituting it into $H_g$ gives a reduced system in $\boldsymbol{\fs}_g$ alone. Newton's method on this reduced system involves a Schur complement Jacobian, which is costly to form. Hwang et al.~\cite{HwangLinCai2010,HwangSuCai2015} showed that it is equivalent to a cheaper and more convenient two-step process, which reads, at iterate $\boldsymbol{\fs}^k$,
\begin{enumerate}
    \item {\sc projection}. Solve $H_b(\boldsymbol{\fs}_g^k,\widetilde{\boldsymbol{\fs}}_b^k) = 0$ for $\widetilde{\boldsymbol{\fs}}_b^k$, and set $\widetilde{\boldsymbol{\fs}}^k = \aef(\boldsymbol{\fs}^k) := (\boldsymbol{\fs}_g^k,\widetilde{\boldsymbol{\fs}}_b^k)$.
    \item {\sc linearization}. Perform one global Newton step,
    \[
    \boldsymbol{\fs}^{k+1} = \widetilde{\boldsymbol{\fs}}^k - \nabla H(\widetilde{\boldsymbol{\fs}}^k)^{-1}H(\widetilde{\boldsymbol{\fs}}^k).
    \]
\end{enumerate}
This avoids the Schur complement, and it shows that nonlinear elimination is itself a right-preconditioner~\cite{brenner2025equivalence}, since the residual function $H$ is untouched and only the unknowns are substituted through $\aef$. In the setting of the monotone Newton theorem, the two-step process moreover retains the global monotone convergence of Newton's method, including for semismooth systems, provided that the (generalized) Jacobian matrix is an M-matrix~\cite{BrennerMarelliTran2026}.

Unlike the PDE-based systems considered in~\cite{brenner2025equivalence,BrennerMarelliTran2026}, where the mesh naturally pairs unknowns with equations, our system has no such structure. The normalized residuals \eqref{eq:trueres} restore this correspondence, because $\NLresns_j$ is, up to the factor $-1/Y_j'$, the Newton increment of the $j$-th unknown. Projecting onto them, that is, solving
\begin{equation}\label{eq:projectNLres}
    \NLresnv_b(\boldsymbol{\fs}_g^k, \widetilde{\boldsymbol{\fs}}_b^k) = 0,
\end{equation}
would therefore be natural, but $\NLresnv$ is far more nonlinear than $H$, which would make this projection expensive. We use instead a frozen version, the {\em linear normalized residuals} at a reference state $\overline{\boldsymbol{y}}$,
\begin{equation}\label{eq:fakeresidual}
    \resnv ( \overline{\boldsymbol{y}}; \boldsymbol{\fs}) := \nabla G( \overline{\boldsymbol{y}})^{-1} H(\boldsymbol{\fs}).
\end{equation}
By contrast with $\resnv$, we refer to $\NLresnv(\boldsymbol\fs) = \nabla G(\mathscr{Y}(\boldsymbol\fs))^{-1}H(\boldsymbol\fs)$, in which the normalizing Jacobian is re-evaluated at the current state instead of being frozen, as the {\em nonlinear normalized residuals}. Taking $\overline{\boldsymbol{y}} = \boldsymbol{y}^k := \mathscr{Y}(\boldsymbol{\fs}^k)$, we redefine the elimination map accordingly. From now on, $\aef_b(\boldsymbol{\fs}_g)$ denotes the solution of $\resnv_b(\boldsymbol{y}^k; \boldsymbol{\fs}_g, \cdot\,) = 0$ rather than that of $H_b(\boldsymbol{\fs}_g,\cdot\,)=0$, and the two steps become
\begin{subequations}
\begin{align}
    \resnv_b(\boldsymbol{y}^k; \boldsymbol{\fs}_g^k, \widetilde{\boldsymbol{\fs}}_b^k) & = 0, \label{eq:projection}\\
    \boldsymbol{\fs}^{k+1} & = \widetilde{\boldsymbol{\fs}}^k - \nabla \resnv(\boldsymbol{y}^k; \widetilde{\boldsymbol{\fs}}^k)^{-1} \resnv(\boldsymbol{y}^k; \widetilde{\boldsymbol{\fs}}^k),\label{eq:linearization}
\end{align}
\end{subequations}
where $\nabla\resnv$ denotes the Jacobian matrix with respect to $\boldsymbol{\fs}$. The following key fact makes this substitution legitimate.
\begin{theorem}\label{th:equivNLresiduals}
The solution of the projection step \eqref{eq:projection} also satisfies \eqref{eq:projectNLres}, and vice-versa.
\end{theorem}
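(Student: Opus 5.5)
The plan is to recast both conditions as the same geometric statement about $H(\widetilde{\boldsymbol{\fs}}^k)$: that it lies in the span of certain columns of a Jacobian. The key structural fact comes from \eqref{eq:nablaG}. The $j$-th column of $\nabla G(\boldsymbol{y})$ is
\[
\begin{bmatrix} x_j\,A\bv_j \\[2pt] S\bv_j \end{bmatrix}, \qquad x_j=\exp y_j,
\]
so it depends on the state only through the single component $y_j$, equivalently $x_j$.

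Fix $\boldsymbol{\fs}_g^k$ and consider any candidate $\widetilde{\boldsymbol{\fs}}_b$. Set $\widetilde{\boldsymbol{\fs}}=(\boldsymbol{\fs}_g^k,\widetilde{\boldsymbol{\fs}}_b)$ and $\widetilde{\boldsymbol{y}}=\mathscr{Y}(\widetilde{\boldsymbol{\fs}})$. Since the parametrization acts componentwise, the good components satisfy $\widetilde{y}_j=Y_j(\fs_j^k)=y_j^k$ for every $j\notin\mathcal{B}$. Hence every good column of $\nabla G(\widetilde{\boldsymbol{y}})$ coincides with the corresponding column of $\nabla G(\boldsymbol{y}^k)$. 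The two matrices differ only in the columns indexed by $\mathcal{B}$.

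Next, unfold the definitions. By \eqref{eq:trueres}, $\NLresnv(\widetilde{\boldsymbol{\fs}})$ is the unique vector $\boldsymbol{\delta}$ satisfying $\nabla G(\widetilde{\boldsymbol{y}})\boldsymbol{\delta}=H(\widetilde{\boldsymbol{\fs}})$. Therefore $\NLresnv_b(\widetilde{\boldsymbol{\fs}})=0$ holds if and only if $H(\widetilde{\boldsymbol{\fs}})$ can be written as a combination of the good columns of $\nabla G(\widetilde{\boldsymbol{y}})$ alone. Here we use invertibility of $\nabla G(\widetilde{\boldsymbol{y}})$: the good columns are linearly independent, so such a representation, when it exists, is the unique solution. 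In the same way, by \eqref{eq:fakeresidual}, $\resnv_b(\boldsymbol{y}^k;\widetilde{\boldsymbol{\fs}})=0$ holds if and only if $H(\widetilde{\boldsymbol{\fs}})$ lies in the span of the good columns of $\nabla G(\boldsymbol{y}^k)$.

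By the previous paragraph these two spans are identical, so the two conditions are equivalent for every $\widetilde{\boldsymbol{\fs}}_b$. In particular, any solution of \eqref{eq:projection} solves \eqref{eq:projectNLres}, and conversely.

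The main point needing care is the standing assumption that both $\nabla G(\boldsymbol{y}^k)$ and $\nabla G(\widetilde{\boldsymbol{y}})$ are nonsingular. This is what makes the normalized residuals well defined and makes the span characterization an ``if and only if'' rather than a one-way inclusion. I would state it explicitly, consistent with the assumption already made before \eqref{eq:inversenablaG}. I would also note in passing that the good components of $\NLresnv$ and $\resnv$ then coincide at such a solution, since they are the same coefficients on the same columns.
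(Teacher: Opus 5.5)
Your proof is correct and follows essentially the paper's route. In both, the decisive fact is that the columns of $\nabla G$ indexed by good unknowns involve only good components, so they coincide at $\boldsymbol{y}^k$ and $\widetilde{\boldsymbol{y}}$. The paper phrases the final linear-algebra step dually: the bad rows of either inverse form a basis of $\mathcal{W}^{\perp}$, where $\mathcal{W}$ is the span of the common good columns, and this yields $\resnv_b = Q\,\NLresnv_b$ for every candidate $\widetilde{\boldsymbol{\fs}}_b$, with an invertible $Q$ depending on it. You instead characterize both zero sets directly by $H(\widetilde{\boldsymbol{\fs}})\in\mathcal{W}$, and your closing remark that $\NLresnv$ and $\resnv$ then agree entirely at a solution is also correct.
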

\begin{proof}
    See \S\ref{proof:simultvanishRes}.
\end{proof}

Solving the less expensive frozen problem \eqref{eq:projection} thus cancels the nonlinear normalized residuals $\NLresnv_b$ as well. Moreover, since $\resnv(\boldsymbol{y}^k;\cdot\,)$ differs from $H$ by the constant invertible factor $\nabla G(\boldsymbol{y}^k)^{-1}$, the linearization step can be performed on $H$ itself, by the affine invariance of Newton's method recalled in \S\ref{sse:notion}. The algorithm actually implemented is therefore the following.
\begin{prop}\label{pr:elimproc}
    The two-step procedure \eqref{eq:projection}--\eqref{eq:linearization} is equivalent to
    \begin{enumerate}
        \item Solve for $\widetilde{\fv}^k_b$ the {\em inner} system 
    \begin{equation}\label{eq:innersys}
        \big\{ \nabla G(\boldsymbol{y}^k)^{-1} H(\fv_g^k, \widetilde{\fv}_b^k) \big\}_b =0 
    \end{equation}
    and set the projected state as $\widetilde{\boldsymbol{\tau}}^{k} = (\boldsymbol{\tau}^k_g, \widetilde{\fv}^k_b )$.
    \item Perform one global {\em outer} Newton iteration
    \begin{equation}\label{eq:linearEH}
    \boldsymbol{\tau}^{k+1} = \widetilde{\boldsymbol{\tau}}^{k}- \nabla H(\widetilde{\boldsymbol{\tau}}^{k})^{-1} H(\widetilde{\boldsymbol{\tau}}^{k}).
    \end{equation}
    \end{enumerate}
\end{prop}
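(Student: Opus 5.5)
The proof compares the two steps of \eqref{eq:projection}--\eqref{eq:linearization} one at a time with the two steps listed in Proposition~\ref{pr:elimproc}. Throughout, the iterate $\boldsymbol{\fs}^k$ is fixed, and so are the reference state $\boldsymbol{y}^k=\mathscr{Y}(\boldsymbol{\fs}^k)$ and the constant matrix $M:=\nabla G(\boldsymbol{y}^k)^{-1}$. The matrix $M$ is nonsingular because $\nabla G$ is assumed nonsingular.

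\emph{Projection step.} By definition \eqref{eq:fakeresidual}, $\resnv(\boldsymbol{y}^k;\boldsymbol{\fs}) = M H(\boldsymbol{\fs})$ for every $\boldsymbol{\fs}$. Taking the $b$-subvector gives
\[
\resnv_b(\boldsymbol{y}^k;\boldsymbol{\fs}_g^k,\widetilde{\boldsymbol{\fs}}_b^k)=\big\{\nabla G(\boldsymbol{y}^k)^{-1}H(\boldsymbol{\fs}_g^k,\widetilde{\boldsymbol{\fs}}_b^k)\big\}_b .
\]
Hence \eqref{eq:projection} and the inner system \eqref{eq:innersys} are literally the same equation in the unknown $\widetilde{\boldsymbol{\fs}}_b^k$. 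They have the same solution set, and in particular they produce the same projected state $\widetilde{\boldsymbol{\fs}}^k=(\boldsymbol{\fs}_g^k,\widetilde{\boldsymbol{\fs}}_b^k)$, assuming existence and uniqueness as in the construction of $\aef_b$.

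\emph{Linearization step.} Since $M$ does not depend on $\boldsymbol{\fs}$, the Jacobian of $\resnv(\boldsymbol{y}^k;\cdot)$ with respect to $\boldsymbol{\fs}$ is $\nabla\resnv(\boldsymbol{y}^k;\boldsymbol{\fs}) = M\nabla H(\boldsymbol{\fs})$. It is invertible wherever $\nabla H$ is. Evaluating at $\widetilde{\boldsymbol{\fs}}^k$,
\[
\nabla\resnv(\boldsymbol{y}^k;\widetilde{\boldsymbol{\fs}}^k)^{-1}\resnv(\boldsymbol{y}^k;\widetilde{\boldsymbol{\fs}}^k)
=\nabla H(\widetilde{\boldsymbol{\fs}}^k)^{-1}M^{-1}MH(\widetilde{\boldsymbol{\fs}}^k)
=\nabla H(\widetilde{\boldsymbol{\fs}}^k)^{-1}H(\widetilde{\boldsymbol{\fs}}^k).
\]
So \eqref{eq:linearization} coincides with \eqref{eq:linearEH}. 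This is just the left-multiplication affine invariance ${}_AF$ of Newton's method recalled in \S\ref{sse:notion}, applied with $A=M$.

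\emph{Main point of care.} The only subtlety is that the frozen matrix $M$ is evaluated at $\boldsymbol{y}^k$, not at the projected state $\mathscr{Y}(\widetilde{\boldsymbol{\fs}}^k)$. The argument above works precisely because $M$ is held constant during both substeps, so its derivative never appears. I would state this explicitly in the write-up. I would also note, via Theorem~\ref{th:equivNLresiduals}, that the projected state obtained this way also cancels the nonlinear normalized residuals $\NLresnv_b$. This is not needed for the equivalence itself, but it justifies the interpretation of the procedure.
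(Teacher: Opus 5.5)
Your proof is correct and follows the paper's argument. Step~1 is \eqref{eq:projection} written out, and because the frozen factor $\nabla G(\boldsymbol{y}^k)^{-1}$ does not depend on $\boldsymbol{\fs}$, it appears in both $\nabla\resnv(\boldsymbol{y}^k;\cdot)$ and $\resnv(\boldsymbol{y}^k;\cdot)$ and cancels in \eqref{eq:linearization}, which yields \eqref{eq:linearEH}.
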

\begin{proof}
Step 1 is \eqref{eq:projection} written out. Since $\nabla\resnv(\boldsymbol{y}^k;\boldsymbol{\fs}) = \nabla G(\boldsymbol{y}^k)^{-1}\nabla H(\boldsymbol{\fs})$, the factor $\nabla G(\boldsymbol{y}^k)^{-1}$ cancels out in \eqref{eq:linearization}, which is therefore identical to \eqref{eq:linearEH}.
\end{proof}
Note that $\nabla H = \nabla G(\mathscr{Y}(\boldsymbol{\fs}))\,\Lambda$ by \eqref{eq:logexp} and the chain rule, so that $\nabla H^{-1}H = \Lambda^{-1}\NLresnv$. Since $\NLresnv_b(\widetilde{\boldsymbol{\fs}}^k)=0$ by Theorem~\ref{th:equivNLresiduals}, the outer step \eqref{eq:linearEH} leaves the bad unknowns unchanged, that is, $\boldsymbol{\fs}^{k+1}_b=\widetilde{\boldsymbol{\fs}}^k_b$. The bad unknowns are thus updated by the projection step alone.

The inner system is again in the separated variables form, each of its equations being an affine combination of the $X_j(\fs_j)$ and $Y_j(\fs_j)$ with $j\in\mathcal B$, so that BC, CD and Hybrid carry over to it. The optimal CD weights for the inner problem take the same form as \eqref{form:alpha.optim}, with the entries of the inverse Jacobian matrix of the inner system with respect to the variables $y_j$, $j\in\mathcal B$, and the coefficients of its terms $Y_j(\fs_j)$ in place of $r_{\ell i}$ and $s_{\ell i}$. In our tests, BC always converged on the inner problem, although sometimes slowly, CD was faster but less robust, and Hybrid with threshold $0.5$ was consistently the best inner solver. We therefore use Hybrid for the inner problem throughout the rest of this section.

One subtlety remains for the outer step. Chaining \eqref{eq:innersys} and \eqref{eq:linearEH} defines the fixed-point map $\Phi_{H_{\boldsymbol{\alpha}}} \circ \aef$. Assuming that the projection step has a unique solution, Theorem~\ref{th:equivNLresiduals} shows that $\aef_b(\boldsymbol{\fs}_g)$ is characterized by $\NLresnv_b(\boldsymbol{\fs}_g,\aef_b(\boldsymbol{\fs}_g))=0$. Hence $\aef_b(\boldsymbol{\fs}_g)$ does not depend on $\boldsymbol{\fs}_b$, even though the frozen matrix $\nabla G(\boldsymbol{y}^k)$ does, and the columns of $\nabla\aef$ indexed by $\mathcal{B}$ vanish. By the chain rule, the Jacobian matrix of the composite map at $\boldsymbol{\fs}^k$ is $\mathcal{J}_{H_{\boldsymbol{\alpha}}}(\widetilde{\boldsymbol{\fs}}^k) \nabla\aef(\boldsymbol{\fs}^k)$. On the other hand, since $\NLresnv_b(\widetilde{\boldsymbol{\fs}}^k) =0$, Remark \ref{rm:cancelcolumn} implies that the columns of $\mathcal{J}_{H_{\boldsymbol{\alpha}}}(\widetilde{\boldsymbol{\fs}}^k)$ indexed by $\mathcal{B}$ vanish as well. Writing both matrices in block form according to the good and bad unknowns, and recalling that $\aef(\boldsymbol{\fs}) = (\boldsymbol{\fs}_g,\aef_b(\boldsymbol{\fs}_g))$, one readily checks that the product reduces to $\mathcal{J}_{H_{\boldsymbol{\alpha}}}(\widetilde{\boldsymbol{\fs}}^k)$. The Jacobian matrix of the composite map is thus the ordinary flatness matrix of $H_{\boldsymbol{\alpha}}$ evaluated at the projected state $\widetilde{\boldsymbol{\fs}}^k$, and not at the entrance state $\boldsymbol{\fs}^k$. Consequently, the CD weights for the outer step must be computed from \eqref{form:alpha.optim} at the projected state $\widetilde{\boldsymbol{x}}^k$ (equivalently $\widetilde{\boldsymbol{y}}^k$), and not at the entrance state $\boldsymbol{x}^k$. There is no circularity here, since the inner system \eqref{eq:innersys} can equally be written in terms of $\widetilde{\boldsymbol{x}}^k$ or $\widetilde{\boldsymbol{y}}^k$. The projected state is therefore defined independently of any parametrization, and the weights $\boldsymbol{\alpha}^k$ are only needed to convert it into $\widetilde{\boldsymbol{\fs}}^k$.

Elimination is thus a right-preconditioner in the sense of \S\ref{sse:def.preflat}. What \S\ref{sse:chem.badchoice} adds is a selection of the bad subset, based on the column norms of the matrix \eqref{eq:Pm1JhP}, that is designed to reduce the flatness estimate as well, a property confirmed by the experiments of \S\ref{sse:chem.results}. In our view, it is this second property, rather than the preconditioning itself, that makes the combination with CD effective.

\subsection{Choice of the bad subset}\label{sse:chem.badchoice}
For H$_2$O, the normalized residuals have two components that dominate the third one, so that $\mathcal{B}=\{2,3\}$ is a natural choice, which remains valid throughout the iterations. For NaCl and Seawater, the orders of magnitude of the normalized residuals change from one iteration to the next, so that no fixed choice is appropriate and the bad subset has to be selected automatically at each iteration. We now derive a systematic, threshold-free rule from the flatness estimate. Let $\widecheck\rho_H:=\widehat\rho_{H_{\boldsymbol\alpha}}$ denote the bound of Proposition~\ref{pr:flatupperbound} evaluated at the optimal weights $\boldsymbol\alpha$ of \eqref{form:alpha.optim}, which only depends on the current state. Substituting these weights into \eqref{eq:Pm1JhP}, the squared Frobenius norm splits column by column as
\begin{equation}\label{eq:sumofGamma}
    \widecheck{\rho}_H^2(\boldsymbol{\fs}) = \sum_{j=1}^{\nbsp}\Gamma_j, \qquad
    \Gamma_j = \bigg\{ \beta_j^2 + \sum_{1\le i\ne j\le \nbsp} \Big( \sum_{\ell=1}^{\nbre} r_{\ell i}s_{\ell j}\Big)^2 \bigg\}  |\NLresns_j|^2,
\end{equation}
with $\beta_j=\operatorname{Proj}_{[0,1]}(\sum_\ell r_{\ell j}s_{\ell j})-\sum_\ell r_{\ell j}s_{\ell j}$. In other words, $\Gamma_j$ is the squared Euclidean norm of the $j$-th column of the matrix \eqref{eq:Pm1JhP} at the optimal weights. Several standard tools borrowed from statistics and machine learning were tested on the family $(\Gamma_j)_{1\le j\le\nbsp}$ without success, such as $2$-means and other segmentation methods \cite{Hartigan1975}, robust scalings like the interquartile range (IQR) or the median absolute deviation (MAD) \cite{Tukey1977}, and spacing methods \cite{DavidNagaraja2003}. Most of them hinge on a user-defined threshold or tuning constant. As for $2$-means, although free of any threshold, it always returns two nonempty clusters and therefore declares a nonempty bad subset even when all the $\Gamma_j$'s are equal, which is precisely a situation in which no elimination should be performed.

Since the flatness that matters for the two-step procedure is the one at the projected state (see \S\ref{sse:chem.elim}), we wish $\widecheck{\rho}_H(\widetilde{\boldsymbol{\fs}})$ to be much smaller than $\widecheck{\rho}_H(\boldsymbol{\fs})$.

\paragraph*{Decrease the flatness estimate by cancelling the largest columns.}
The same decomposition holds at the projected state, $\widecheck{\rho}_H^2(\widetilde{\fv}) = \sum_{j=1}^{\nbsp}\widetilde{\Gamma}_j$, where $\widetilde{\Gamma}_j$ is obtained from $\Gamma_j$ by evaluating every state-dependent quantity at the projected state, that is, by replacing $\beta_j$, $r_{\ell i}$ and $\NLresns_j$ by $\widetilde{\beta}_j$, $\widetilde{r}_{\ell i}$ and $\widetilde{\NLresns}_j$. The stoichiometric coefficients $s_{\ell j}$ are data of the problem and therefore carry no tilde. In particular, $\widetilde{\Gamma}_j$ still contains $|\widetilde{\NLresns}_j|^2$ as a multiplicative factor. The projection step \eqref{eq:projection} cancels the component $\widetilde{\resns}_j$ of $\resnv(\boldsymbol{y}^k;\widetilde{\boldsymbol{\fs}}^k)$ for every $j\in\mathcal{B}$, so that $\widetilde{\NLresns}_j = 0$ for every $j\in\mathcal{B}$ by Theorem~\ref{th:equivNLresiduals}. For these indices, $\widetilde{\Gamma}_j = 0$, and the $j$-th column no longer contributes to the flatness estimate at the projected state. This observation suggests taking as elements of the bad subset $\mathcal{B}$ those indices $j$ whose $\Gamma_j$'s, taken together, contribute most to $\widecheck{\rho}_H^2 (\fv)$.

To put this idea into practice, we need to clarify the meaning of ``taken together'' and ``contribute most''. First, let us sort the column contributions in nonincreasing order,
\begin{equation}\label{normcol.sorted}
    \Gamma_{(1)} \ge \Gamma_{(2)} \ge \ldots \ge \Gamma_{(\nbsp)},
\end{equation}
where $j\mapsto (j)$ is a permutation of $\{1,\ldots,\nbsp\}$, the subscript $(j)$ designating the index of the $j$-th largest contribution. To detect the group of most salient columns, let us consider the cumulative sums
\begin{equation}\label{eq:cumulativeSum}
    R_j = \sum_{\ell=1}^{j} \, \Gamma_{(\ell)}, \qquad R_0:=0.
\end{equation}
We have $R_1 = \Gamma_{(1)}$ and $R_{\nbsp} = \widecheck{\rho}_H^2$. The sequence $(R_j)_{0\le j\le\nbsp}$ is nondecreasing and concave, in the sense that $R_{j+2} - 2R_{j+1} + R_j = \Gamma_{(j+2)}-\Gamma_{(j+1)}\le 0$, as illustrated in Figure~\ref{fig:normcol}, where the blue curve is a continuous interpolation of the discrete data $(j,R_j)$. The question is to find an index $j_*$ as small as possible such that the partial sum $R_{j_*}$ is close enough to the total $\widecheck{\rho}^2_H$.

\paragraph*{Algorithm with threshold: norms of columns (NC).}
For a given threshold $\varepsilon\in (0,1)$, let
\begin{subequations}
    \begin{align}
        j_*(\varepsilon) & = \min\{ j \, | \,R_j\ge (1-\varepsilon^2)\widecheck{\rho}^2_H\},\label{eq:goodenough}\\
        \mathcal{B}(\varepsilon) & =\{ (1), \, (2), \, \dots, \, (j_*(\varepsilon))\}.
    \end{align}
\end{subequations}
In other words, $j_*(\varepsilon)$ is the first index for which the partial sum $R_j$ reaches $\widecheck{\rho}^2_H$ within a relative error of $\varepsilon^2$. The ``bad'' unknowns are those whose rank in the ordering \eqref{normcol.sorted} does not exceed $j_*(\varepsilon)$. The {\em raison d'\^etre} of \eqref{eq:goodenough} is the following: the cancellation of the $j_*(\varepsilon)$ strongest columns has erased a relative portion of at least $1-\varepsilon^2$ from the initial squared Frobenius norm $\widecheck{\rho}^2_H(\fv)$. Thus, if the values of $\widetilde{\Gamma}_j$ from the  $\nbsp - j_*(\varepsilon)$ remaining columns have not changed much from those of $\Gamma_j$, the new squared Frobenius norm is expected to satisfy
\begin{equation}
   \widecheck{\rho}^2_H(\widetilde{\fv}) \le \varepsilon^2 \widecheck{\rho}^2_H(\fv),\qquad
   \text{or equivalently, }\qquad \widecheck{\rho}_H(\widetilde{\fv}) \le \varepsilon \widecheck{\rho}_H(\fv).
\end{equation}

This naive algorithm suffers from a few drawbacks. First, it is always delicate to determine an optimal value for $\varepsilon$. If $\varepsilon$ is too big, some components that should be classified as ``bad'' may be overlooked; on the other hand, if $\varepsilon$ is excessively small, the ``bad'' subset is over-chosen. In addition, $\varepsilon$ is an easily mutable quantity, which means that it could change across various test cases and at different starting points. In a nutshell, there is no universal good choice for $\varepsilon$.
\begin{figure}[h]
  \centering
    \includegraphics[width=0.8\textwidth]{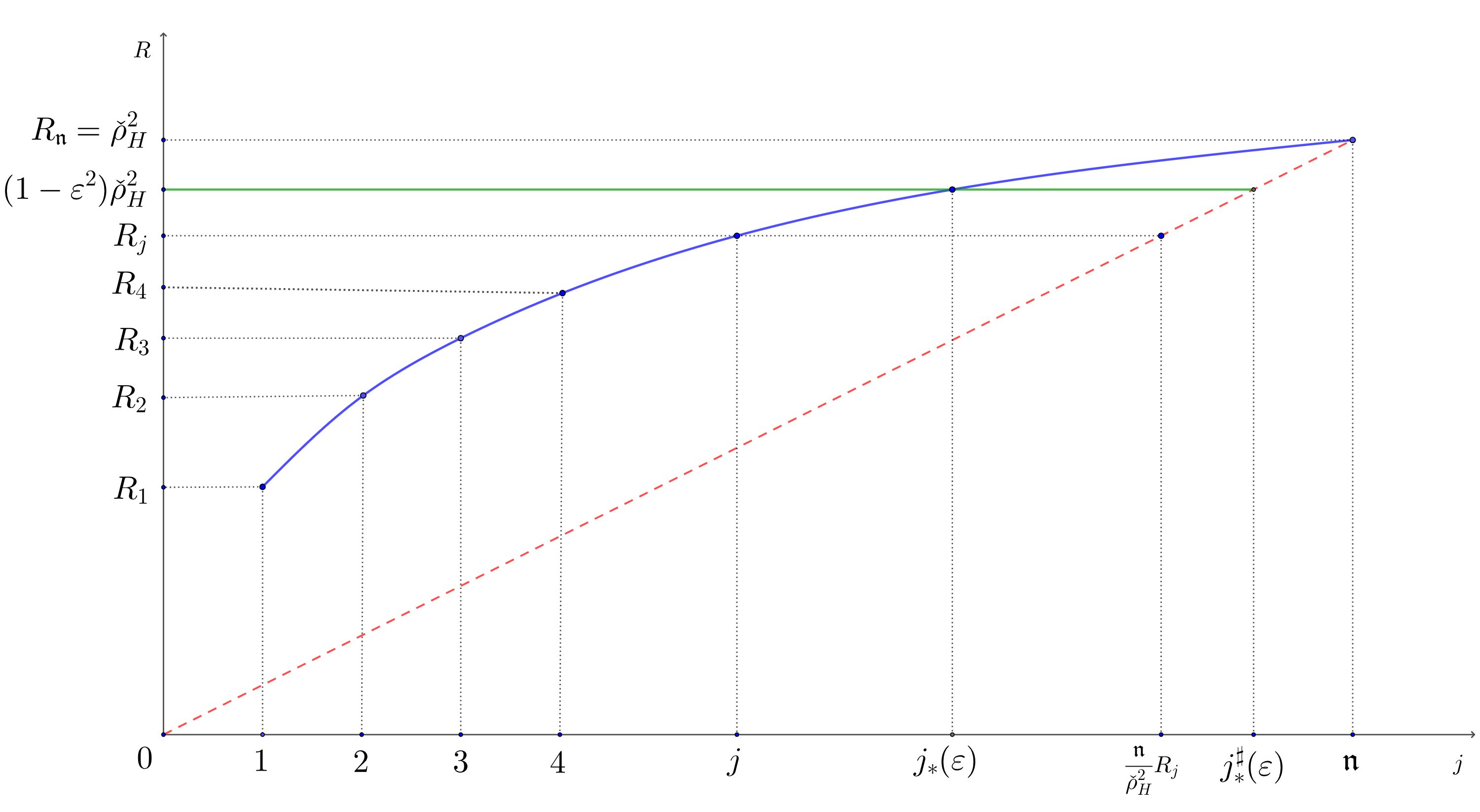}
  \caption{Trajectory of $R_j$}
  \label{fig:normcol}
\end{figure} 
\FloatBarrier

\paragraph*{Algorithm without threshold: above-average norms of columns (AANC).}
This situation prompts us to work out a threshold-free rule. Geometrically, imposing a fixed threshold $\varepsilon$ through \eqref{eq:goodenough} amounts to drawing the horizontal line $R = (1-\varepsilon^2)\widecheck{\rho}^2_H$, represented by the green line in Figure~\ref{fig:normcol}, and to looking for its intersection with the blue curve joining the points $(j, R_j)$. For convenience, assume that this blue curve is the graph of a continuous, increasing and concave function $\mathscr{R}$ on $[0,\nbsp]$ such that $\mathscr{R}(j) = R_j$. The abscissa of the intersection is then
\begin{equation}\label{eq:jstar}
    j_*(\varepsilon) = \mathscr{R}^{-1} \big( (1 -
                        \varepsilon^2)\widecheck{\rho}^2_H \big),
\end{equation}
where $j_*(\varepsilon)$ is temporarily allowed to take non-integer values.

The worst situation occurs when the sequence $(R_j)_{0\le j\le \nbsp}$ is arithmetic, or equivalently when the $\Gamma_j$'s are all equal. In this case, $\mathscr{R}(j) = (j/\nbsp) \widecheck{\rho}^2_H$, and the desired level is only reached at
\begin{equation}\label{eq:jstarsharp}
    j_*^\sharp(\varepsilon) = (1- \varepsilon^2) \nbsp,
\end{equation}
as depicted by the red line in Figure~\ref{fig:normcol}. In general, $\mathscr{R}$ is concave with $\mathscr{R}(0)=0$ and $\mathscr{R}(\nbsp)=\widecheck{\rho}^2_H$, so that $\mathscr{R}(j)\ge (j/\nbsp)\widecheck{\rho}^2_H$ on $[0,\nbsp]$. Therefore, $j_*^\sharp(\varepsilon) \ge j_*(\varepsilon)$ for every such function $\mathscr{R}$.

In the absence of a universally optimal fixed value of $\varepsilon$, we choose the most economical one, namely the one that achieves the largest gap between the two abscissas \eqref{eq:jstar} and \eqref{eq:jstarsharp}. In formal language, we want to select
\begin{equation}\label{eq:maxecartj}
  \varepsilon_* = \operatorname{arg} \max_{\varepsilon\in [0,1]} \{ j_*^\sharp(\varepsilon) - j_*(\varepsilon) \}
\end{equation}
as the solution of a continuous optimization problem. To return to the discrete level, we restrict the admissible values of $\varepsilon$ to those for which $(1-\varepsilon^2) \widecheck{\rho}^2_H$ coincides with one of the $R_j$'s, that is, $\varepsilon \in \{ \varepsilon_0, \, \varepsilon_1, \,\ldots, \, \varepsilon_\nbsp \}$ with
\begin{equation}
    \varepsilon_j = \bigg( 1 - \frac{R_j}{\widecheck{\rho}^2_H}\bigg)^{\!1/2}.
\end{equation}
Assuming the $\Gamma_j$'s to be positive, we have $j_*^\sharp(\varepsilon_j) = \nbsp R_j/\widecheck{\rho}^2_H$ and $j_*(\varepsilon_j) = j$, so that the discrete version of \eqref{eq:maxecartj} reads
\begin{equation}\label{eq:maxecartjDISC}
    j_*= \operatorname{arg} \max_{j\in\{0,1,\ldots,\nbsp\}} \bigg\{\dfrac{\nbsp}{\widecheck{\rho}^2_H} R_j - j \bigg\}.
\end{equation}
When this problem has several maximizers, we retain the smallest one, in keeping with our wish to take $j_*$ as small as possible.
\begin{prop}\label{pr:solminjnc}
    The solution of the discrete maximization problem \eqref{eq:maxecartjDISC} is
    \begin{equation}\label{eq:solmaxecart}
    j_*= \max \bigg\{ j \mid \Gamma_{(j)} > \frac{\widecheck{\rho}^2_H}{\nbsp} \bigg\}
    = \max \bigg\{ j \mid \Gamma_{(j)} > \frac{1}{\nbsp} \sum_{\ell=1}^{\nbsp} \, \Gamma_{\ell} \, \bigg\},
\end{equation}
and the corresponding ``bad'' subset is $\mathcal{B} = \{(1),\, (2), \, \ldots,\, (j_*)\}$.
\end{prop}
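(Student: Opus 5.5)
The plan is to study the objective $\Psi(j) := \frac{\nbsp}{\widecheck{\rho}^2_H} R_j - j$ for $j\in\{0,1,\ldots,\nbsp\}$ through its forward differences. Since $R_j - R_{j-1} = \Gamma_{(j)}$ by \eqref{eq:cumulativeSum}, we have
\begin{equation*}
\Psi(j)-\Psi(j-1) = \frac{\nbsp}{\widecheck{\rho}^2_H}\,\Gamma_{(j)} - 1, \qquad 1\le j\le \nbsp.
\end{equation*}
This increment is positive exactly when $\Gamma_{(j)} > \widecheck{\rho}^2_H/\nbsp$, zero when equality holds, and negative otherwise. We assume $\widecheck{\rho}^2_H>0$, as is implicit in \eqref{eq:maxecartjDISC}.

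The key step uses the ordering \eqref{normcol.sorted}. Because the $\Gamma_{(j)}$ are nonincreasing, so are the increments, and $\Psi$ is discretely concave (this is the concavity of $(R_j)$ already noted in the paper). Let $j_0 := \max\{ j \mid \Gamma_{(j)} > \widecheck{\rho}^2_H/\nbsp\}$, with the convention $j_0=0$ if the set is empty. By monotonicity of the increments, they are strictly positive for $j\le j_0$, so $\Psi$ is strictly increasing on $\{0,\ldots,j_0\}$. For $j>j_0$ they are nonpositive, so $\Psi$ is nonincreasing on $\{j_0,\ldots,\nbsp\}$. Hence $j_0$ is a maximizer. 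Any other maximizer is at least $j_0$, because $\Psi(j)<\Psi(j_0)$ for $j<j_0$. The smallest maximizer, which is the tie-breaking rule adopted after \eqref{eq:maxecartjDISC}, is therefore $j_0$. Ties can only occur at larger indices, where some $\Gamma_{(j)}$ equal the mean exactly.

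The second expression in \eqref{eq:solmaxecart} follows from $\widecheck{\rho}^2_H = R_{\nbsp} = \sum_{\ell}\Gamma_\ell$, by \eqref{eq:sumofGamma}. The bad subset is then $\{(1),\ldots,(j_*)\}$ by the definition of $\mathcal{B}$ that accompanies \eqref{eq:goodenough}.

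The only delicate point is the handling of the strict inequality versus ties, together with the edge cases. We need to check that $j_*\ge 1$ whenever the $\Gamma_j$ are not all equal, since the largest value then exceeds the mean. When all the $\Gamma_j$ are equal, $\Psi\equiv 0$, and the smallest maximizer is $0$, consistent with the empty set in \eqref{eq:solmaxecart} and with the wish to perform no elimination in that situation.
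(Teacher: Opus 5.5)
Your proof is correct and follows essentially the same route as the paper. The paper writes the rescaled objective as the partial sum $\sum_{\ell\le j}\big(\Gamma_{(\ell)}-\widecheck{\rho}^2_H/\nbsp\big)$ of a nonincreasing sequence of centered terms, which amounts to your forward-difference argument, and it handles the all-equal case through the convention $\max\emptyset=0$ exactly as you do.
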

\begin{proof}
    See \S\ref{sse:badAANC}.
\end{proof}

Here, we use the convention $\max\emptyset := 0$. The set in \eqref{eq:solmaxecart} is empty only when all the $\Gamma_j$'s are equal, in which case $j_*=0$, $\mathcal B=\emptyset$ and no elimination is performed. Since $\widecheck{\rho}^2_H/\nbsp$ is the average of the $\Gamma_\ell$'s by \eqref{eq:sumofGamma}, rule \eqref{eq:solmaxecart} simply places the indices with above-average contributions in the bad subset, that is, $\mathcal{B}=\{ j \mid \Gamma_j > \widecheck{\rho}^2_H/\nbsp\}$, whence its name {\em ``above-average norms of columns''} (AANC). In particular, no sorting is needed in practice.

\paragraph*{Double AANC.}
Preliminary numerical tests show that, in most cases, AANC outperforms the rules mentioned above in terms of the complexity measure \eqref{eq:complexity} introduced in \S\ref{sse:chem.results}. There are, however, cases in which it incurs a higher cost. Inspecting the iterations reveals that, with the single-pass rule \eqref{eq:solmaxecart}, the flatness estimate after the projection step is occasionally larger than before it. This defeats the purpose of our construction and suggests that the selected bad subset misses some inherently bad components. As a remedy, we perform an additional round of selection by AANC on the remaining components. The resulting rule, called {\em double AANC}, reads
\begin{subequations}\label{eq:double.AANC}
    \begin{align}
        \mathcal{B}_1 & = \Big\{ 1\le j \le \nbsp \mid \Gamma_j >  \frac{1}{\nbsp} \sum_{\ell=1}^{\nbsp} \, \Gamma_{\ell} \Big\},\\
        \mathcal{B}_2 & = \Big\{ j\not\in \mathcal{B}_1 \mid \Gamma_j >  \frac{1}{\nbsp - |\mathcal{B}_1|} \sum_{\ell\not\in\mathcal{B}_1} \, \Gamma_{\ell} \Big\},\\
        \mathcal{B} & = \mathcal{B}_1\cup\mathcal{B}_2.
    \end{align}
\end{subequations}

\subsection{Numerical results}\label{sse:chem.results}
We conduct numerical experiments on three test cases, with increasing numbers of species, elements and reactions,
\begin{itemize}
    \item[$\rhd$] \textbf{H\(_2\)O}, with 3 species, 2 elements and 1 reaction;
    \item[$\rhd$] \textbf{NaCl}, with 8 species, 4 elements and 4 reactions;
    \item[$\rhd$] \textbf{Seawater}, with 37 species, 10 elements and 27 reactions.
\end{itemize}
Full details of these test cases are given in \S\ref{sse:testcases}. The stopping criteria are $\|\resnv_b\|_2\le 10^{-10}$ for the inner solve and $\|H\|_2\le 10^{-10}$ for the outer step, both being capped at $100$ iterations. For BC and Hybrid, which involve no inner solve, the outer iterations are simply their Newton iterations. BCe and CDe denote the BC and CD formulations augmented with elimination, the inner problem being solved by Hybrid in both cases.

Starting points are built from initial mole numbers $\textsf{n}_i^0$ through
$\textsf{x}_i^0 = \textsf{n}_i^0/(\textsf{n}_1^0+\ldots+\textsf{n}_{|\mathcal{C}|}^0)$ for $1\le i\le|\mathcal{C}|$, together with $\omega^0 = 1/(\textsf{n}_1^0+\ldots+\textsf{n}_{|\mathcal{C}|}^0)$, where $|\mathcal{C}|$ is the number of species, $\textsf{x}_i$ the molar fraction of the $i$-th species and $\omega$ the reciprocal of the total number of moles (see \S\ref{sse:chemform}). We take $\textsf{n}_1^0 = 55$ for water, the remaining $\textsf{n}_j^0$ being reported in the tables. To compare the methods fairly, we track the overall complexity
\begin{equation}\label{eq:complexity}
\begin{split}
    \operatorname{complexity} = {} & \sum_{k}|\mathcal{B}^k|^2\times(\text{\rm inner iterations at outer step } k)\\
    & + \nbsp^2\times(\text{\rm outer iterations}),
\end{split}
\end{equation}
where $\mathcal{B}^k$ is the bad subset selected at outer step $k$. In this measure, one Newton iteration on a system of size $s$ is counted as $s^2$, and the first sum is absent for BC and Hybrid.

\paragraph*{H$_2$O.} Here, $\nbsp=3$ and the parameter $\nu$ defined in \eqref{eq:nu.def} measures the charge imbalance between $\mathrm{H^+}$ and $\mathrm{OH^-}$ (see \S\ref{sse:descript.water}). The bad subset is always $\mathcal{B}=\{2,3\}$. Two regimes were tested, $\nu=10^{-1}$ and $\nu=10^{-7}$, and both are reported in Tables~\ref{tab:res.H2O.outer} and \ref{tab:res.H2O.complexity}. Each of these two regimes brings out a different ingredient of the method. The regime $\nu=10^{-7}$ brings out the contribution of elimination itself, since BCe and CDe need only $1$ or $2$ outer iterations there, against $9$ to $25$ for the traditional method BC, which is an improvement of roughly one order of magnitude in the number of iterations. The regime $\nu=10^{-1}$, where the outer problem retains a curvature that elimination alone does not remove, brings out instead the specific contribution of preflattening, since BCe and CDe then differ markedly although they rely on the same elimination procedure.

\begin{table}[h]
\centering
\setlength{\tabcolsep}{3pt}
\begin{minipage}[b]{0.48\textwidth}
\centering
\begin{tabular}{|c|c|c|c|c|c|}
\hline
\multicolumn{2}{|c|}{Initial guess} & & & & \\ \cline{1-2}
    $\textsf{n}_2^0$ & $\textsf{n}_3^0$ & BC & Hybrid & BCe & CDe \\ \hline
    $10^{-2}$ & $2\cdot 10^{-2}$ & 7 & 4 & 4 & 2 \\ \hline
    $10^{-2}$ & $5\cdot 10^{-2}$ & 7 & 4 & 4 & 2 \\ \hline
    $10^{-4}$ & $2\cdot 10^{-4}$ & 7 & 4 & 4 & 2 \\ \hline
    $10^{-4}$ & $5\cdot 10^{-4}$ & 7 & 4 & 4 & 2 \\ \hline
    $\varepsilon_{32}$ & $2\varepsilon_{32}$ & 7 & 4 & 4 & 2 \\ \hline
    $\varepsilon_{32}$ & $5\varepsilon_{32}$ & 7 & 4 & 4 & 2 \\ \hline
\end{tabular}
\caption*{$\nu=10^{-1}$}
\end{minipage}
\hspace{0.015\textwidth}
\begin{minipage}[b]{0.48\textwidth}
\centering
\begin{tabular}{|c|c|c|c|c|c|}
\hline
\multicolumn{2}{|c|}{Initial guess} & & & & \\ \cline{1-2}
    $\textsf{n}_2^0$ & $\textsf{n}_3^0$ & BC & Hybrid & BCe & CDe \\ \hline
    $10^{-2}$ & $2\cdot 10^{-2}$ & 21 & 8  & 2 & 2 \\ \hline
    $10^{-2}$ & $5\cdot 10^{-2}$ & 25 & 10 & 2 & 2 \\ \hline
    $10^{-4}$ & $2\cdot 10^{-4}$ & 9  & 6  & 1 & 1 \\ \hline
    $10^{-4}$ & $5\cdot 10^{-4}$ & 22 & 9  & 1 & 1 \\ \hline
    $\varepsilon_{32}$ & $2\varepsilon_{32}$ & 19 & 6 & 1 & 1 \\ \hline
    $\varepsilon_{32}$ & $5\varepsilon_{32}$ & 14 & 5 & 1 & 1 \\ \hline
\end{tabular}
\caption*{$\nu=10^{-7}$}
\end{minipage}
\caption{Number of outer iterations for H$_2$O 
($\varepsilon_{32}=1.1920929\times 10^{-7}$ is the single-precision machine epsilon)}
\label{tab:res.H2O.outer}
\end{table}
\FloatBarrier

\begin{table}[h]
\centering
\setlength{\tabcolsep}{3pt}
\begin{minipage}[b]{0.48\textwidth}
\centering
\begin{tabular}{|c|c|c|c|c|c|}
\hline
\multicolumn{2}{|c|}{Initial guess} & & & & \\ \cline{1-2}
    $\textsf{n}_2^0$ & $\textsf{n}_3^0$ & BC & Hybrid & BCe & CDe \\ \hline
    $10^{-2}$ & $2\cdot 10^{-2}$ & 63 & 36 & 48 & 30 \\ \hline
    $10^{-2}$ & $5\cdot 10^{-2}$ & 63 & 36 & 48 & 30 \\ \hline
    $10^{-4}$ & $2\cdot 10^{-4}$ & 63 & 36 & 48 & 30 \\ \hline
    $10^{-4}$ & $5\cdot 10^{-4}$ & 63 & 36 & 48 & 30 \\ \hline
    $\varepsilon_{32}$ & $2\varepsilon_{32}$ & 63 & 36 & 48 & 30 \\ \hline
    $\varepsilon_{32}$ & $5\varepsilon_{32}$ & 63 & 36 & 48 & 30 \\ \hline
\end{tabular}
\caption*{$\nu=10^{-1}$}
\end{minipage}
\hspace{0.015\textwidth}
\begin{minipage}[b]{0.48\textwidth}
\centering
\begin{tabular}{|c|c|c|c|c|c|}
\hline
\multicolumn{2}{|c|}{Initial guess} & & & & \\ \cline{1-2}
$\textsf{n}_2^0$ & $\textsf{n}_3^0$ & BC & Hybrid & BCe & CDe \\ \hline
$10^{-2}$ & $2\cdot 10^{-2}$ & 189 & 72 & 50 & 50 \\ \hline
$10^{-2}$ & $5\cdot 10^{-2}$ & 225 & 90 & 70 & 70 \\ \hline
$10^{-4}$ & $2\cdot 10^{-4}$ & 81  & 54 & 45 & 45 \\ \hline
$10^{-4}$ & $5\cdot 10^{-4}$ & 198 & 81 & 41 & 41 \\ \hline
$\varepsilon_{32}$ & $2\varepsilon_{32}$ & 171 & 54 & 25 & 25 \\ \hline
$\varepsilon_{32}$ & $5\varepsilon_{32}$ & 126 & 45 & 25 & 25 \\ \hline
\end{tabular}
\caption*{$\nu=10^{-7}$}
\end{minipage}
\caption{Complexity of the methods for H$_2$O}
\label{tab:res.H2O.complexity}
\end{table}
\FloatBarrier

For $\nu=10^{-1}$, each algorithm needs the same number of iterations at all six starting points, namely $7$ for BC, $4$ for Hybrid, $4$ for BCe and $2$ for CDe. BCe and CDe use the same elimination procedure, the same bad subset and the same inner solver, and they differ only in the parametrization of the outer step. The twofold gap between them can therefore be ascribed to preflattening, and the mechanism can be made explicit. Since $\NLresnv_b$ vanishes at the projected state and the first column of $\nabla G$ is $(0,x_1,1)^{\!\top}$, the residual at the projected state reads $H(\widetilde{\boldsymbol{\fs}}^k)=\widetilde{\NLresns}_1\,(0,\widetilde{x}_1^k,1)^{\!\top}$. The charge balance $f_1=0$ thus holds exactly at the projected state, and it pins $x_2$ near $2\nu/(1+\nu)\approx 0.18$, because $x_3$ is negligible (about $1.5\times 10^{-17}$ at equilibrium).  The outer step then only updates the water fraction $x_1$, which has to decrease from $x_1^0\approx 1$ to $x_1^*\approx 0.82$. At the projected state, formula \eqref{form:alpha.optim} returns $\alpha_1=0$ for water, so that CDe performs this step in the variable $x_1$ itself, with respect to which the material balance $f_2$ is affine. This balance is then satisfied after a single outer step, and the next projection removes the remaining residual. BCe performs the same step in the variable $\ln x_1$, with respect to which $f_2$ is curved, and it only converges quadratically from an initial gap of about $0.18$, hence its four outer iterations. The complexity figures confirm this reading, with $63$ for BC, $36$ for Hybrid, $48$ for BCe and $30$ for CDe. With $\nbsp^2=9$, the outer steps account for $36$ units of the complexity of BCe and $18$ of that of CDe, so that both methods spend the same $12$ units in the inner solves and the whole gap between them comes from the outer step. Finally, Hybrid, which performs no elimination, needs as many iterations as BCe at a lower cost ($36$ against $48$). Elimination alone therefore does not guarantee a gain once the cost of the inner solves is accounted for, and the best result is obtained by combining it with preflattening.

The picture is markedly different for $\nu=10^{-7}$. There, BCe and CDe coincide at every starting point and need only $1$ or $2$ outer iterations, while BC needs between $9$ and $25$ and Hybrid between $5$ and $10$. The outer step again only updates the water fraction, but it now starts very close to the solution. At equilibrium, $x_2\approx 2.0\times 10^{-7}$ and $x_3 \approx 1.7\times 10^{-11}$, so that $x_1^*$ differs from $1$ by about $2\times 10^{-7}$, whereas $x_1^0 = 55 / (55 + \textsf{n}_2^0 + \textsf{n}_3^0)$ lies within about $1.1\times 10^{-3}$ of $1$. 

One outer step reduces the residual to about half the square of the gap $x_1^0 - x_1^*$, which is below the tolerance $10^{-10}$ as soon as this gap is smaller than about $1.4\times 10^{-5}$, that is, for the four starting points with $\textsf{n}_2^0 \le 10^{-4}$. For the other two starting points, a second outer step is needed. This holds for both parametrizations of the outer step, which explains why BCe and CDe coincide. High-precision residual tracking confirms that BCe and CDe converge quadratically near the solution.

Taken together, the two regimes complement each other. At $\nu=10^{-7}$, elimination resolves most of the difficulty, reducing the number of outer iterations of BC by a factor between $9$ and $22$ and the complexity by a factor between $1.8$ and $6.8$. At $\nu=10^{-1}$, elimination alone (BCe) and preflattening alone (Hybrid) bring the same reduction in iterations, from $7$ to $4$, and only their combination, CDe, halves this count again. Preflattening is thus a useful complement to elimination, precisely in those regimes where elimination alone does not flatten the outer problem completely.

\paragraph*{NaCl.} Here, $\nbsp=9$, the bad subset changes from one iteration to the next, and CDe is run with double AANC. Table~\ref{tab:res.NaCl} shows the resulting gains. Compared with BC, CDe reduces the number of outer iterations by a factor of up to $10$ (from $30$ to $3$ for $(\textsf{n}_1^0, \textsf{n}_{j\ne 1}^0) = (55, 10^{-4})$). It reduces the complexity by a factor of more than $3$ in this hardest case (from $2430$ to $729$), and by a factor between $1.2$ and $1.9$ at the other starting points. Compared with Hybrid, the complexity gain is marginal at the two starting points with $\textsf{n}_{j\ne 1}^0\in \{55,1\}$ ($801$ and $809$ against $810$), and lies between $1.2$ and $3$ at the other ones.

\begin{table}[h]
\begin{minipage}[b]{0.48\textwidth}
\centering
\begin{tabular}{|c|c|c|c|c|}
\hline
\multicolumn{2}{|c|}{Initial guess} &  &  &  \\ \cline{1-2}
    $\textsf{n}_1^0$ & $\textsf{n}_{j\ne 1}^0$ & BC & Hybrid & CDe \\ \hline
    $55$ & $55$        & 12 & 10 & 5 \\ \hline
    $55$ & $1$         & 12 & 10 & 3 \\ \hline
    $55$ & $0.1$       & 11 & 10 & 2 \\ \hline
    $55$ & $10^{-2}$   & 12 & 11 & 2 \\ \hline
    $55$ & $10^{-4}$   & 30 & 27 & 3 \\ \hline
    $55$ & $\varepsilon_{32}$ & 16 & 16 & 3 \\ \hline
\end{tabular}
\caption*{Outer iterations}
\end{minipage}
\hfill
\begin{minipage}[b]{0.48\textwidth}
\centering
\begin{tabular}{|c|c|c|c|c|}
\hline
\multicolumn{2}{|c|}{Initial guess} &  &  &  \\ \cline{1-2}
$\textsf{n}_1^0$ & $\textsf{n}_{j\ne 1}^0$ & BC & Hybrid & CDe \\ \hline
$55$ & $55$      & 972  & 810  & 801 \\ \hline
$55$ & $1$       & 972  & 810  & 809 \\ \hline
$55$ & $0.1$     & 891  & 810  & 670 \\ \hline
$55$ & $10^{-2}$ & 972  & 891  & 661 \\ \hline
$55$ & $10^{-4}$ & 2430 & 2187 & 729 \\ \hline
$55$ & $\varepsilon_{32}$ & 1296 & 1296 & 679 \\ \hline
\end{tabular}
\caption*{Complexity}
\end{minipage}
\caption{NaCl, CDe run with double AANC}
\label{tab:res.NaCl}
\end{table}
\FloatBarrier

\paragraph*{Seawater.} Here, CDe is again run with the double AANC rule \eqref{eq:double.AANC}. Table~\ref{tab:res.Seawater} confirms the same trend at a larger scale. CDe converges in $5$ to $9$ outer iterations at all starting points, against $16$ to $29$ for BC. Its complexity is about $2.8$ times lower than those of BC and Hybrid for $\textsf{n}_{j\ne 1}^0=10^{-4}$ ($13743$ against $38988$). At the other starting points, it is $1.4$ to $1.7$ times lower than that of BC and $1.2$ to $1.6$ times lower than that of Hybrid. Tracking the flatness estimate across iterations, we find that it is lower right after the projection step than right before it, at every iteration of every run reported here. This confirms that elimination actually flattens the system rather than merely reducing its size.

\begin{table}[h]
\begin{minipage}[b]{0.48\textwidth}
\centering
\begin{tabular}{|c|c|c|c|c|}
\hline
\multicolumn{2}{|c|}{Initial guess} &  &  &  \\ \cline{1-2}
    $\textsf{n}_1^0$ & $\textsf{n}_{j\ne 1}^0$ & BC & Hybrid & CDe \\ \hline
    $55$ & $55$      & 22 & 22 & 7 \\ \hline
    $55$ & $1$       & 19 & 16 & 6 \\ \hline
    $55$ & $0.1$     & 17 & 14 & 5 \\ \hline
    $55$ & $10^{-2}$ & 16 & 14 & 5 \\ \hline
    $55$ & $10^{-4}$ & 27 & 27 & 5 \\ \hline
    $55$ & $\varepsilon_{32}$ & 29 & 27 & 9 \\ \hline
\end{tabular}
\caption*{Outer iterations}
\end{minipage}
\hfill
\begin{minipage}[b]{0.48\textwidth}
\centering
\begin{tabular}{|c|c|c|c|c|}
\hline
\multicolumn{2}{|c|}{Initial guess} &  &  &  \\ \cline{1-2}
    $\textsf{n}_1^0$ & $\textsf{n}_{j\ne 1}^0$ & BC & Hybrid & CDe \\ \hline
    $55$ & $55$      & 31768 & 31768 & 21921 \\ \hline
    $55$ & $1$       & 27436 & 23104 & 19322 \\ \hline
    $55$ & $0.1$     & 24548 & 20216 & 16206 \\ \hline
    $55$ & $10^{-2}$ & 23104 & 20216 & 13787 \\ \hline
    $55$ & $10^{-4}$ & 38988 & 38988 & 13743 \\ \hline
    $55$ & $\varepsilon_{32}$ & 41876 & 38988 & 24693 \\ \hline
\end{tabular}
\caption*{Complexity}
\end{minipage}
\caption{Seawater, CDe run with double AANC}
\label{tab:res.Seawater}
\end{table}
\FloatBarrier

Together, these results qualify the conclusion of \S\ref{sse:chem.methods}. Preflattening alone is not enough for chemical systems, but combined with elimination and with a threshold-free selection of the bad subset, it becomes an effective accelerator. At every starting point of every test case, CDe needs fewer outer iterations than BC and Hybrid and has a lower complexity, although its complexity gain over Hybrid is marginal at two starting points for NaCl. On H$_2$O, the only test case for which BCe is reported, CDe outperforms BCe for $\nu=10^{-1}$ and matches it for $\nu=10^{-7}$.

\section{Conclusion}
We have introduced the ``flatness'' number, defined as the spectral radius of the Jacobian matrix of Newton's iteration function, to serve as a measure of the local nonlinearity of an algebraic system. This quantity is affine-invariant and vanishes for affine systems, which makes it a natural nonlinear counterpart of the condition number.  We have called ``preflattening'' any right change of variables, chosen anew at each iteration, that reduces this number or an upper bound of it.

On a scalar model problem, the resulting Continuous Dynamic (CD) method cancels the flatness number at each iterate and converges cubically. For chemical equilibrium systems, preflattening alone brings only modest gains. Combined with nonlinear elimination, and with a bad subset selected by the threshold-free double AANC rule derived from the flatness estimate, it reduces the number of outer iterations of the traditional BC method by a factor of up to $10$ on NaCl and up to about $5$ on Seawater, and the overall complexity by a factor of up to about $3$ on both test cases. Extending this approach to non-ideal mixtures and to nonlinear systems arising from the discretization of PDEs is left for future work.

\bibliographystyle{siamplain}
\bibliography{Preflatten}

\appendix
\section{Proofs of Section~\ref{ch:notion}}
\subsection{Column-wise expression of the flatness matrix}\label{sse:flatMATexp}
Differentiating $\Phi_F(\ov) = \ov - \nabla F(\ov)^{-1} F(\ov)$ with respect to the $j$-th component of $\ov$, we get
\begin{equation}
   j\mbox{-th column of } \mathcal{J}_F(\ov) = {\bv}_j - \partial_j [\nabla F(\ov)^{-1}] F(\ov) - \nabla F(\ov)^{-1} \partial_j F(\ov),
\end{equation}
where ${\bv}_j$ stands for the $j$-th vector of the canonical basis. Since $\nabla F(\ov)^{-1} \nabla F(\ov) = I$ implies $\nabla F(\ov)^{-1} \partial_j F(\ov) = {\bv}_j$, cancellation with the first summand gives
\begin{equation}\label{eq:jthcolumn2}
   j\mbox{-th column of } \mathcal{J}_F(\ov) = - \partial_j [\nabla F(\ov)^{-1}] F(\ov).
\end{equation}
Differentiating $\nabla F(\ov)^{-1} \nabla F(\ov) = I$ with respect to $\ovc_j$, we obtain
\begin{equation}
    \partial_j [\nabla F(\ov)^{-1}] \nabla F(\ov) + \nabla F(\ov)^{-1} \partial_j [\nabla F(\ov)] = 0, 
\end{equation}
from which we infer that
\begin{equation}\label{eq:djinvert}
    \partial_j [\nabla F(\ov)^{-1}] = - \nabla F(\ov)^{-1} \partial_j [\nabla F(\ov)] \nabla F(\ov)^{-1}.
\end{equation}
Plugging \eqref{eq:djinvert} into \eqref{eq:jthcolumn2}, we end up with the claim \eqref{eq:flatMATjth}.

\subsection{Affine invariance of the flatness number}\label{sse:invarianceJrho}
Differentiating ${}_A F(\ov) = A F(\ov)$, we get $\nabla {}_A F(\ov) = A \nabla F(\ov)$. Hence,
\begin{equation}
    \nabla {}_A F(\ov)^{-1} = \nabla F(\ov)^{-1} A^{-1}, \qquad \nabla {}_A F(\ov)^{-1} {}_A F(\ov) = \nabla F(\ov)^{-1} F(\ov).
\end{equation}
By Proposition~\ref{pr:flatMATexp}, each column of $\mathcal{J}_{{}_A F}(\ov)$ is found to be equal to the corresponding column of $\mathcal{J}_{F}(\ov)$. Thus, \eqref{eq:invarianceLIN} holds true.

Differentiating $F_{A,\rhsv} (\ovw) = F(A\ovw+\rhsv)$ with respect to $\ovw$, we get $\nabla F_{A,\rhsv} (\ovw) = \nabla F(A\ovw+\rhsv) A$, where the $\nabla$ in the right-hand side is carried out with respect to $\ov$. Hence, with $\ov=A\ovw+\rhsv$,
\begin{equation}
    \nabla F_{A,\rhsv} (\ovw)^{-1} = A^{-1}\nabla F(\ov)^{-1}, \qquad
    \nabla F_{A,\rhsv} (\ovw)^{-1} F_{A,\rhsv} (\ovw) = A^{-1}\nabla F(\ov)^{-1} F(\ov).
\end{equation}
The iteration function for $F_{A,\rhsv}$ then reads
\begin{align*}
    \Phi_{F_{A,\rhsv}}(\ovw)  & = \ovw -   \nabla F_{A,\rhsv} (\ovw)^{-1} F_{A,\rhsv} (\ovw)\\
    &= A^{-1} (\ov - \rhsv) - A^{-1}\nabla F(\ov)^{-1} F(\ov)\\
    &= A^{-1} \Phi_F(\ov) - A^{-1}\rhsv.
\end{align*}
Differentiating both sides with respect to $\ovw$ yields $\mathcal{J}_{F_{A,\rhsv}}(\ovw) = A^{-1} \mathcal{J}_{F}(\ov) A$, from which \eqref{eq:invarianceAFF} follows.

\subsection{Upper bound for the flatness number}\label{sse:flatnumupper}
Since the spectrum is invariant by similarity transformation, we have
\begin{equation}
    \rho_F(\ov) = \rho(\mathcal{J}_F(\ov)) = \rho(P^{-1} \mathcal{J}_F(\ov) P).
\end{equation}
On the other hand, it is a well-known fact \cite[Theorem 5.6.9]{HornJohnson2013} that $\rho(M) \le \|M\|$ for every square matrix $M$ and every matrix norm. Hence, we have \eqref{eq:comprhonormJ} with $M=P^{-1} \mathcal{J}_F(\ov) P$.

\section{Proofs of Section~\ref{ch:scalar}}
\subsection{Smaller flatness number}\label{proof:rho.smaller}
The two functions considered are
\begin{equation}
    \fsx(x)=px+q\{x\}^{1/m}-r, \qquad
    \fsy(y)=p\{y\}^m+qy-r,
\end{equation}
which implies that
\begin{equation}
    \fsx^{\prime}(x)=p+q\textstyle\frac{1}{m}|x|^{1/m-1}, \qquad
    \fsy^{\prime}(y)=pm|y|^{m-1}+q
\end{equation}
and
\begin{equation}
    \fsx^{\prime\prime}(x)=q\textstyle\frac{1}{m}\left(\textstyle\frac{1}{m}-1\right)\{x\}^{1/m-2}, \qquad
    \fsy^{\prime\prime}(y)=pm(m-1)\{y\}^{m-2}.
\end{equation}
By \eqref{eq:flat.scalar}, the corresponding flatness numbers are
\begin{subequations}
\begin{align}
    \rho_{\fsx}(x) & =\left|\dfrac{(px+q\{x\}^{1/m}-r)q\textstyle\frac{1}{m}\left(\textstyle\frac{1}{m}-1\right)\{x\}^{1/m-2}}{\left(p+q\textstyle\frac{1}{m}|x|^{1/m-1}\right)^2}\right|\\
    \rho_{\fsy}(y) & =\left|\dfrac{(p\{y\}^m+qy-r)pm(m-1)\{y\}^{m-2}}{\left(pm|y|^{m-1}+q\right)^2}\right|.
\end{align}
\end{subequations}
Let $x\ne 0$ be such that $\fsx(x)\ne 0$, and let $y=\{x\}^{1/m}$. Since $|x|^{1/m-1}=|y|^{1-m}$ and $|x|^{1/m-2}=|y|^{1-2m}$, we have
\begin{equation}
    \fsx'(x)=\frac{\fsy'(y)}{m|y|^{m-1}}, \qquad |\fsx''(x)| = \frac{q(m-1)}{m^2}\,|y|^{1-2m}.
\end{equation}
Using $\fsx(x)=\fsy(y)$, we can rewrite the two flatness numbers as
\begin{equation}
    \rho_{\fsx}(x) = \frac{|\fsx(x)|\,(m-1)}{|y|\,\fsy'(y)^2}\, q, \qquad
    \rho_{\fsy}(y) = \frac{|\fsx(x)|\,(m-1)}{|y|\,\fsy'(y)^2}\, pm|y|^{m-1}.
\end{equation}
The common factor being positive, $\rho_{\fsy}(y)<\rho_{\fsx}(x)$ if and only if $pm|y|^{m-1}<q$, that is,
\begin{equation}
    |y|<\left(\dfrac{q}{pm}\right)^{1/(m-1)}.
\end{equation}
The proof is complete.

\subsection{Optimal weight for the scalar equation}\label{proof:opt.para}
Recall the CD parametrization
\begin{equation}
    X_\alpha(\fs)=\{\fs\}^{\frac{m}{(1-\alpha) m+\alpha}},\qquad Y_\alpha(\fs)=\{\fs\}^{\frac{1}{(1-\alpha) m+\alpha}}.
\end{equation}
Their first derivatives are
\begin{subequations}
\begin{alignat}{2}
    X^{\prime}_\alpha(\tau) & =\dfrac{m}{(1-\alpha) m+\alpha}|\tau|^{\frac{\alpha(m-1)}{(1-\alpha) m+\alpha}}
    & & =\dfrac{m}{(1-\alpha) m+\alpha}\left[\dfrac{X_{\alpha}(\tau)}{Y_{\alpha}(\tau)}\right]^{\alpha},\\
    Y^{\prime}_\alpha(\tau) & =\dfrac{1}{(1-\alpha) m+\alpha}|\tau|^{\frac{(\alpha-1)(m-1)}{(1-\alpha) m+\alpha}}
    & & =\dfrac{1}{(1-\alpha) m+\alpha}\left[\dfrac{Y_{\alpha}(\tau)}{X_{\alpha}(\tau)}\right]^{1-\alpha},
\end{alignat}
\end{subequations}
and the second derivatives are
\begin{subequations}
\begin{alignat}{2}
    X^{\prime \prime}_\alpha(\tau) & =\dfrac{\alpha m(m-1)}{[(1-\alpha) m+\alpha]^{2}}\{\tau\}^{\frac{(2 \alpha-1) m-2 \alpha}{(1-\alpha) m+\alpha}}
    & & =\dfrac{\alpha m(m-1)}{[(1-\alpha) m+\alpha]^{2}} \dfrac{X_{\alpha}(\tau)}{\tau^{2}},\\
    Y^{\prime \prime}_\alpha(\tau) & =\dfrac{(\alpha-1)(m-1)}{[(1-\alpha) m+\alpha]^{2}}\{\tau\} ^{\frac{(2 \alpha-2) m-(2 \alpha-1)}{(1-\alpha) m+\alpha}}
    & & =\dfrac{(\alpha-1)(m-1)}{[(1-\alpha) m+\alpha]^{2}} \dfrac{Y_{\alpha}(\tau)}{\tau^{2}}.
\end{alignat}
\end{subequations}
By \eqref{eq:halphatau}, we obtain
\begin{equation}
    \fst^{\prime \prime}_{\alpha}(\tau) =pX^{\prime \prime}_{\alpha}(\tau)+qY^{\prime \prime}_{\alpha}(\tau) =\dfrac{m-1}{[(1-\alpha) m+\alpha]^{2}}\left\{\alpha pm \dfrac{X_{\alpha}(\tau)}{\tau^{2}}+(\alpha-1) q\dfrac{Y_{\alpha}(\tau)}{\tau^{2}}\right\},
\end{equation}
which leads, for $\tau\neq0$, to
\begin{align*}
\fst^{\prime \prime}_{\alpha}(\tau)=0 & \;\;\Longleftrightarrow\;\; \alpha pmX_{\alpha}(\tau)-(1-\alpha) qY_{\alpha}(\tau)=0 \\
& \;\;\Longleftrightarrow\;\; \alpha\left[pmX_{\alpha}(\tau)+qY_{\alpha}(\tau)\right]=qY_{\alpha}(\tau)\\
& \;\;\Longleftrightarrow\;\; \alpha=\dfrac{q y}{pm x+qy}.
\end{align*}
The last equivalence holds because $x$ and $y$ are nonzero and have the same sign, so that $pmx+qy\neq0$. It is then straightforward to check that $\alpha\in [0,1]$, which completes the proof.

\section{Proofs of Section~\ref{ch:chemistry}}
\subsection{Physical origin}\label{sse:chemform}
We now provide a brief overview of the physical origin of the abstract model \eqref{eq:nnsystem}. Following classical textbooks such as \cite{Bethke2007,ErdiToth1989,SmithMissen1982}, we define a single-phase chemical system at fixed temperature and pressure as the collection $\{\mathcal{C}, \mathcal{E}, \mathcal{R}\}$ of three sets, namely
\begin{alignat*}{2}
    \mathcal{C} & = (\mathtt{C}_1, \mathtt{C}_2, \ldots, \mathtt{C}_{|\mathcal{C}|})  & & \qquad \text{a set of } |\mathcal{C}| \text{ chemical species};\\
    \mathcal{E} & = (\mathtt{E}_1, \mathtt{E}_2, \ldots, \mathtt{E}_{|\mathcal{E}|}) & & \qquad \text{a set of } |\mathcal{E}| < |\mathcal{C}| \text{ chemical elements};\\
    \mathcal{R} & = (\mathtt{R}_1, \mathtt{R}_2, \ldots, \mathtt{R}_{|\mathcal{R}|}) & & \qquad \text{a set of } |\mathcal{R}| = |\mathcal{C}|  - |\mathcal{E}| \text{ chemical reactions}.
\end{alignat*}
The set $\mathcal{E}$ contains all the elements that enter into the composition of the species of the set $\mathcal{C}$. The interaction between $\mathcal{E}$ and $\mathcal{C}$ is depicted by a $|\mathcal{E}|\times |\mathcal{C}|$-matrix $\textsf{A}$, called {\em atomic matrix}, whose entry $\textsf{A}_{ij}$ is the number of atoms of $\mathtt{E}_i$ involved in molecule $\mathtt{C}_j$\footnote{Our description here is deliberately simplified for the sake of clarity. In reality, the definition of a species is broader and may include atoms, molecules, ions, radicals, or complexes. In a multiphase setting, the same chemical entity in different phases (gas, liquid, solid) gives rise to different chemical species for thermodynamic description. For example, $\HHO (L),\, \HHO (G),\, \HHO (S)$ are considered distinct chemical species.}. The interaction between $\mathcal{C}$ and $\mathcal{R}$ is reflected by a $|\mathcal{R}|\times |\mathcal{C}|$-matrix $\textsf{S}^{\!\top}$, the transpose $\textsf{S}$ of which is called {\em stoichiometric matrix}. The entry $s_{\ell j}:=\textsf{S}^{\!\top}_{\ell j}$ is the stoichiometric coefficient of molecule $\mathtt{C}_j$ in reaction $\mathtt{R}_\ell$, which reads
\[
\sum_{\substack{1\le j\le |\mathcal{C}|\\ s_{\ell j} < 0}} (- s_{\ell j}) \mathtt{C}_j \rightleftharpoons
\sum_{\substack{1\le j\le |\mathcal{C}|\\ s_{\ell j} > 0}} s_{\ell j} \mathtt{C}_j.
\]
Note that stoichiometry implies
\begin{equation}
    \textsf{A} \textsf{S} = 0.
\end{equation}
As an example, the water dissociation system
\[
\mathcal{C} = ( \HHO, \, \Hp, \, \OHm ), \qquad \mathcal{E} = ( \mathrm{H}, \,\mathrm{O} ),
\qquad \mathcal{R} = ( \Hp + \OHm \rightleftharpoons \HHO )
\]
consists of 3 species, 2 elements and 1 reaction. The atomic and stoichiometric matrices are given by
\begin{align*}
  \textsf{A} & = \left[\begin{array}{ccc}
                2 & \;\;1 & \;\;\;1 \\
                1 & \;\;0 & \;\;\;1 \\
            \end{array}\right] \begin{array}{c}
                \mathrm{H}\\
                \mathrm{O}
            \end{array} \qquad\qquad  \textsf{S}^{\!\top} = [1, \, - 1, \, -1].
            \\*
            & \begin{array}{cccc}
                &  \; \HHO & \Hp & \!\!\OHm
            \end{array}
\end{align*}

The chemical equilibrium problem, also called {\em speciation problem}, consists in finding, given the number of moles $\textsf{f}_i$ of each element $\mathtt{E}_i$, the number of moles $\textsf{n}_j$ of each species $\mathtt{C}_j$ such that
\begin{subequations}\label{eq:speciation}
\begin{align}
            \textsf{A} \nmolevec &= \textsf{f},\label{eq:material}\\
            \textsf{S}^{\!\top} \boldsymbol{\mu}(\nmolevec) &= 0,\label{eq:thermodynamical}
\end{align}
\end{subequations}
where $\textsf{f} = (\textsf{f}_1, \textsf{f}_2,\ldots,\textsf{f}_{|\mathcal{E}|})$ is the known feed vector, $\nmolevec = (\nmole_1,\nmole_2, \ldots, \nmole_{|\mathcal{C}|})$ is the unknown composition vector and $\boldsymbol{\mu} (\nmolevec) = (\mu_1(\nmolevec), \mu_2(\nmolevec), \ldots, \mu_{|\mathcal{C}|}(\nmolevec))$ is the vector of chemical potentials. For an ideal mixture, the potentials take the form
\begin{equation}\label{eq:potential}
    \mu_j(\nmolevec) = \mu_j^{\circ} + \textsf{R}\textsf{T} \ln \frac{\nmole_j}{\nmole_1 + \nmole_2 + \ldots + \nmole_{|\mathcal{C}|}},
\end{equation}
where $\textsf{R}$ is the universal gas constant, $\textsf{T}$ the temperature and $\mu_j^\circ$ the reference potential of species $\mathtt{C}_j$ at that temperature. For non-ideal mixtures, formula \eqref{eq:potential} has to be supplemented by corrective terms. Anyhow, all admissible potentials $\mu_j$ must be $0$-homogeneous with respect to $\nmolevec$ and satisfy the Gibbs--Duhem conditions
\begin{equation}
    \sum_{j = 1}^{|\mathcal{C}|} \textsf{n}_j \frac{\partial \mu_j}{\partial \textsf{n}_i} (\nmolevec) = 0, \qquad \forall i\in\{1,\ldots,|\mathcal{C}|\}.
\end{equation}
In this work, we shall restrict ourselves to ideal mixtures.

In system \eqref{eq:speciation}, the first subset of equations \eqref{eq:material} expresses the conservation of elements, whereas the second subset \eqref{eq:thermodynamical} expresses the vanishing of the chemical affinity of each reaction at equilibrium. Note that system \eqref{eq:speciation}--\eqref{eq:potential} is only one possible approach to solving the speciation problem. Among many other approaches \cite{LealKSS2017}, the most prominent one seeks to directly minimize the Gibbs energy
\begin{equation}\label{eq:Gibbsmin}
    \min_{\textsf{A} \nmolevec =  \textsf{f}} \; \sum_{j=1}^{|\mathcal{C}|} {\nmole}_j \mu_j(\nmolevec)
\end{equation}
subject to the conservation constraint \eqref{eq:material}. It can be shown that the optimality conditions of the constrained minimization problem \eqref{eq:Gibbsmin} are equivalent to \eqref{eq:speciation}, and that a solution exists and is unique under appropriate assumptions \cite{Jonval2023,Jonval2024}.

To alleviate the nonlinearities of \eqref{eq:potential} with respect to the unknown $\boldsymbol{\textsf{n}}$, Jonval et al. \cite{JonvalBGCFT2025} reformulated system \eqref{eq:speciation} using $(\boldsymbol{\textsf{x}},\omega)\in\mathbb{R}^{|\mathcal{C}|}\times\mathbb{R}$ as unknowns, with
\[
\textsf{x}_i = \frac{\textsf{n}_i}{\textsf{n}_1 + \textsf{n}_2 + \ldots + \textsf{n}_{|\mathcal{C}|} }, \qquad\quad \omega = \frac{1}{\textsf{n}_1 + \textsf{n}_2 + \ldots + \textsf{n}_{|\mathcal{C}|}}.
\]
These are respectively the molar fractions of the species and the reciprocal of the total number of moles. It is straightforward to show that \eqref{eq:speciation}--\eqref{eq:potential} is equivalent to the enlarged system
\begin{subequations}\label{eq:enlarged}
    \begin{alignat}{3}
    \textsf{A} & \boldsymbol{\textsf{x}}  & & - \omega \textsf{f} & & = \textsf{0},\label{eq:linear1}\\
    \boldsymbol{\textsf{1}}^{\!\top} & \boldsymbol{\textsf{x}} & & - 1 & & = 0,\label{eq:linear2}\\
    \textsf{S}^{\!\top} & \ln \boldsymbol{\textsf{x}} & & - \textsf{t} & &= \textsf{0},
\end{alignat}
\end{subequations}
where
\begin{equation}\label{eq:vecTref}
    \textsf{t}_\ell = - \frac{1}{\textsf{R}\textsf{T}} \sum_{j = 1}^{|\mathcal{C}|} s_{\ell j} \mu^{\circ}_j,
\end{equation}
for $1\le \ell\le |\mathcal{R}|$. System \eqref{eq:enlarged} is now in the abstract form \eqref{eq:nnsystem}--\eqref{eq:nnsystemcondensed} with
\begin{equation}
    \nbsp = |\mathcal{C}| + 1, \qquad \nbel = |\mathcal{E}| + 1, \qquad \nbre = |\mathcal{R}|,
\end{equation}
and
\begin{equation}\label{eq:Axbt}
     A = \begin{bmatrix}
         \textsf{A} &  - \textsf{f}\\
         \boldsymbol{\textsf{1}}^{\!\top} & 0
     \end{bmatrix},
     \qquad
     \boldsymbol{x} = \begin{bmatrix}\,\boldsymbol{\textsf{x}} \,\\ \,\omega \,\end{bmatrix}, \qquad
     b = \begin{bmatrix} \, \textsf{0} \, \\ \,1\, \end{bmatrix},
     \qquad
     S = \begin{bmatrix} \textsf{S}^{\!\top} & \textsf{0} \end{bmatrix},
     \qquad
     t = \textsf{t}.
\end{equation}

For some simple models, it may happen that a suitable combination of the affine equations \eqref{eq:linear1}--\eqref{eq:linear2} enables us to isolate $\omega$ in a single affine equation. This equation is then decoupled from the remaining ones and will be used only to deduce $\omega$ at the end, once $\boldsymbol{\textsf{x}}$ is known. The system of equations satisfied by $\boldsymbol{\textsf{x}}$ is also in the abstract form \eqref{eq:nnsystem}, with
\begin{equation}
    \nbsp = |\mathcal{C}|, \qquad \nbel = |\mathcal{E}|, \qquad \nbre = |\mathcal{R}|.
\end{equation}
Such a situation arises, e.g., in the water dissociation problem presented in \S\ref{sse:descript.water}.

\subsection{Flatness matrix for a separated form}\label{sse:SeparatedForm}
By the column-wise formula \eqref{eq:flatMATjth} of Proposition~\ref{pr:flatMATexp}, the $j$-th column of $\mathcal{J}_H$ is given by $\nabla H^{-1} \partial_j [\nabla H ] \nabla H^{-1} H$. Because of \eqref{eq:separated}, for each $j=1, 2, \dots, \nbsp$, we have
\begin{equation}
    \partial_j [\nabla H ] = \begin{bmatrix}
    \, 0 & \dots & \partial_{jj} h_1 & \dots & 0 \,\\
    \, 0 & \dots & \partial_{jj} h_2 & \dots & 0 \,\\
    \, \vdots & \ddots & \vdots & \ddots & \vdots \,\\
    \, 0 & \dots & \partial_{jj} h_{\nbsp} & \dots & 0 \,
    \end{bmatrix}\!\!.
\end{equation}
Multiplying the above matrices by the vector $\nabla H^{-1} H =: (\eta_1,\eta_2,\dots,\eta_{\nbsp})^{\!\top} = \boldsymbol{\eta}$ and assembling the columns, we end up with
\begin{align}
    \mathcal{J}_H & = \nabla H^{-1} \begin{bmatrix}
        \, \partial_{11} h_1 \eta_1 & \partial_{22} h_1 \eta_2 &\dots&  \partial_{\nbsp\nbsp} h_1 \eta_{\nbsp} \,\\
        \, \partial_{11} h_2 \eta_1 & \partial_{22} h_2 \eta_2 &\dots&  \partial_{\nbsp\nbsp} h_2 \eta_{\nbsp} \,\\
        \, \vdots & \vdots &\ddots&  \vdots \,\\
        \, \partial_{11} h_{\nbsp} \eta_1 & \partial_{22} h_{\nbsp} \eta_2 &\dots& \partial_{\nbsp\nbsp} h_{\nbsp} \eta_{\nbsp} \,
    \end{bmatrix}\nonumber\\
    & = \nabla H^{-1} \begin{bmatrix}
        \, \partial_{11} h_1 & \partial_{22} h_1 &\dots&  \partial_{\nbsp\nbsp} h_1 \,\\
        \, \partial_{11} h_2 & \partial_{22} h_2 &\dots&  \partial_{\nbsp\nbsp} h_2 \,\\
        \, \vdots & \vdots &\ddots&  \vdots \,\\
        \, \partial_{11} h_{\nbsp} & \partial_{22} h_{\nbsp} &\dots& \partial_{\nbsp\nbsp} h_{\nbsp} \,
    \end{bmatrix}
    \begin{bmatrix}
        \, \eta_1 & 0 &\dots&  0 \,\\
        \, 0 & \eta_2 &\dots&  0 \,\\
        \, \vdots & \vdots &\ddots&  \vdots \,\\
        \, 0 & 0 &\dots& \eta_{\nbsp} \,
    \end{bmatrix}\nonumber\\[3pt]
    & = \nabla H^{-1} \mathfrak{D}^2 H \operatorname{Diag}(\nabla H^{-1} H),
\end{align}
as claimed by Lemma~\ref{le:separatedform}.

\subsection{Flatness matrix for the chemical system}\label{sse:flatchemgen}
Since $X_i = \exp Y_i$, we have $X'_i = x_i Y'_i$. Hence,
\begin{align}
    \nabla H =
    \begin{bmatrix}
    \, a_{11} x_1Y_1^{\prime} & a_{12} x_2Y_2^{\prime} & \dots & a_{1\nbsp} x_{\nbsp}Y_{\nbsp}^{\prime}\; \\
    \, a_{21} x_1Y_1^{\prime} & a_{22} x_2Y_2^{\prime} & \dots & a_{2\nbsp} x_{\nbsp}Y_{\nbsp}^{\prime}\; \\
    \, \vdots & \vdots & \ddots & \vdots\; \\
    \, a_{\nbel 1} x_1Y_1^{\prime} & a_{\nbel 2} x_2Y_2^{\prime} & \dots & a_{\nbel \nbsp} x_{\nbsp}Y_{\nbsp}^{\prime}\; \\
    \, s_{11}Y_1^{\prime} & s_{12}Y_2^{\prime} & \dots & s_{1\nbsp}Y_{\nbsp}^{\prime}\; \\
    \, s_{21}Y_1^{\prime} & s_{22}Y_2^{\prime} & \dots & s_{2\nbsp}Y_{\nbsp}^{\prime}\; \\
    \, \vdots & \vdots & \ddots & \vdots\; \\
    \, s_{\nbre 1}Y_1^{\prime} & s_{\nbre 2}Y_2^{\prime} & \dots & s_{\nbre \nbsp}Y_{\nbsp}^{\prime} \;
    \end{bmatrix} = \nabla G \, \operatorname{Diag}(\mathscr{Y}')
\end{align}
with $\mathscr{Y}'= (Y_1^{\prime}, Y_2^{\prime},\dots, Y_{\nbsp}^{\prime})^{\!\top}$. It follows that
\begin{equation}\label{eq:nablaH}
    \nabla H^{-1} = \operatorname{Diag}(\mathscr{Y}')^{-1} \nabla G^{-1}
\end{equation}
and
\begin{equation}
    \nabla H^{-1} H = \operatorname{Diag}(\mathscr{Y}')^{-1} \nabla G^{-1} H = \operatorname{Diag}(\mathscr{Y}')^{-1} \NLresnv,
\end{equation}
where $\NLresnv$ is the vector of normalized residuals defined in \eqref{eq:trueres}. Applying the $\operatorname{Diag}$ operator defined in \S\ref{sse:chem.model}, we get
\begin{equation}\label{eq:matresidual}
    \operatorname{Diag} (\nabla H^{-1} H) =  \operatorname{Diag}(\mathscr{Y}')^{-1} \operatorname{Diag}(\NLresnv).
\end{equation}

We now strive to compute $\mathfrak{D}^2 H$, defined in Lemma~\ref{le:separatedform}. Differentiating  $X'_i = x_i Y'_i$, we get $X''_i = x_i (Y''_i + (Y'_i)^2)$. Therefore,
\begin{align}
\mathfrak{D}^2 H & =\begin{bmatrix}
    \, a_{11} X_1^{\prime\prime} & a_{12} X_2^{\prime\prime} & \dots & a_{1\nbsp} X_{\nbsp}^{\prime\prime}\; \\
    \, a_{21} X_1^{\prime\prime} & a_{22} X_2^{\prime\prime} & \dots & a_{2\nbsp} X_{\nbsp}^{\prime\prime}\; \\
    \, \vdots & \vdots & \ddots & \vdots\; \\
    \, a_{\nbel 1} X_1^{\prime\prime} & a_{\nbel 2} X_2^{\prime\prime} & \dots & a_{\nbel \nbsp} X_{\nbsp}^{\prime\prime}\; \\
    \, s_{11}Y_1^{\prime\prime} & s_{12}Y_2^{\prime\prime} & \dots & s_{1\nbsp}Y_{\nbsp}^{\prime\prime}\; \\
    \, s_{21}Y_1^{\prime\prime} & s_{22}Y_2^{\prime\prime} & \dots & s_{2\nbsp}Y_{\nbsp}^{\prime\prime}\; \\
    \, \vdots & \vdots & \ddots & \vdots\; \\
    \, s_{\nbre 1}Y_1^{\prime\prime} & s_{\nbre 2}Y_2^{\prime\prime} & \dots & s_{\nbre \nbsp}Y_{\nbsp}^{\prime\prime}\;\,
    \end{bmatrix}\nonumber\\
    & =\begin{bmatrix}
    \, a_{11} x_1 (Y_1^{\prime\prime} + (Y_1^{\prime})^2) & a_{12} x_2 (Y_2^{\prime\prime} + (Y_2^{\prime})^2) & \dots & a_{1\nbsp} x_{\nbsp} (Y_{\nbsp}^{\prime\prime} + (Y_{\nbsp}^{\prime})^2)\; \\
    \, a_{21} x_1 (Y_1^{\prime\prime} + (Y_1^{\prime})^2) & a_{22} x_2 (Y_2^{\prime\prime} + (Y_2^{\prime})^2) & \dots & a_{2\nbsp} x_{\nbsp} (Y_{\nbsp}^{\prime\prime} + (Y_{\nbsp}^{\prime})^2)\; \\
    \, \vdots & \vdots & \ddots & \vdots\; \\
    \, a_{\nbel 1} x_1 (Y_1^{\prime\prime} + (Y_1^{\prime})^2) & a_{\nbel 2} x_2 (Y_2^{\prime\prime} + (Y_2^{\prime})^2) & \dots & a_{\nbel \nbsp} x_{\nbsp} (Y_{\nbsp}^{\prime\prime} + (Y_{\nbsp}^{\prime})^2)\; \\
    \, s_{11}Y_1^{\prime\prime} & s_{12}Y_2^{\prime\prime} & \dots & s_{1\nbsp}Y_{\nbsp}^{\prime\prime}\; \\
    \, s_{21}Y_1^{\prime\prime} & s_{22}Y_2^{\prime\prime} & \dots & s_{2\nbsp}Y_{\nbsp}^{\prime\prime}\; \\
    \, \vdots & \vdots & \ddots & \vdots\; \\
    \, s_{\nbre 1}Y_1^{\prime\prime} & s_{\nbre 2}Y_2^{\prime\prime} & \dots & s_{\nbre \nbsp}Y_{\nbsp}^{\prime\prime}\;\,
    \end{bmatrix}\nonumber\\
    &  =\nabla G\begin{bmatrix}
    \,Y_1^{\prime\prime} & 0 & \dots & 0\;\\
    \,0 & Y_2^{\prime\prime} & \dots & 0\;\\
    \, \vdots & \vdots & \ddots & \vdots\;\\
    \, 0 & 0 & \dots & Y_{\nbsp}^{\prime\prime}\;
    \end{bmatrix}
    +\bigg(I-\sum_{\ell=1}^{\nbre}\boldsymbol{e}_{\nbel+\ell} \boldsymbol{e}_{\nbel+\ell}^{\!\top}\bigg)\nabla G\begin{bmatrix}
    \,(Y_1^{\prime})^2 & 0 & \dots & 0\;\\
    \,0 & (Y_2^{\prime})^2 & \dots & 0\;\\
    \, \vdots & \vdots & \ddots & \vdots\;\\
    \, 0 & 0 & \dots & (Y_{\nbsp}^{\prime})^2\;\\
    \end{bmatrix}\nonumber\\[3pt]
    &  =\nabla G\operatorname{Diag}(\mathscr{Y}^{\prime\prime})
    + \bigg(I-\sum_{\ell=1}^{\nbre}\boldsymbol{e}_{\nbel+\ell} \boldsymbol{e}_{\nbel+\ell}^{\!\top}\bigg) \nabla G \operatorname{Diag}(\mathscr{Y}')^2, \label{eq:DfrakH}
\end{align}
where $\boldsymbol{e}_j$ is the $j$-th canonical basis vector and $\mathscr{Y}^{\prime\prime}= (Y_1^{\prime\prime}, Y_2^{\prime\prime}, \dots, Y_{\nbsp}^{\prime\prime})^{\!\top}$. Inserting \eqref{eq:nablaH}, \eqref{eq:matresidual} and \eqref{eq:DfrakH} into formula \eqref{eq:separatedflatmat} of Lemma~\ref{le:separatedform}, we end up with
\begin{align}
    \mathcal{J}_H &= \operatorname{Diag}(\mathscr{Y}')^{-1} \nabla G^{-1}
    \bigg[\nabla G \operatorname{Diag}(\mathscr{Y}^{\prime\prime}) \nonumber\\
       & \qquad\qquad\qquad\qquad\quad + \bigg(I-\sum_{\ell=1}^{\nbre}\boldsymbol{e}_{\nbel+\ell} \boldsymbol{e}_{\nbel+\ell}^{\!\top}\bigg)\nabla G \operatorname{Diag}(\mathscr{Y}')^2 \bigg]
     \operatorname{Diag}(\mathscr{Y}')^{-1} \operatorname{Diag}(\NLresnv) \nonumber\\
     & = \bigg\{\! \operatorname{Diag}(\mathscr{Y}')^{-2} \operatorname{Diag}(\mathscr{Y}^{\prime\prime}) + I 
     -\sum_{\ell=1}^{\nbre}
     \operatorname{Diag}(\mathscr{Y}')^{-1} \nabla G^{-1} \boldsymbol{e}_{\nbel+\ell} \boldsymbol{e}_{\nbel+\ell}^{\!\top} \nabla G \operatorname{Diag}(\mathscr{Y}')
     \bigg\} \operatorname{Diag}(\NLresnv). \label{eq:bigchunk}
\end{align}
On the other hand, $\nabla G^{-1}\boldsymbol{e}_{\nbel+\ell} = (r_{\ell 1},\dots,r_{\ell\nbsp})^{\!\top}$ by the definition \eqref{eq:inversenablaG} of
$r_{\ell i}$, while reading off the $(\nbel+\ell)$-th row of \eqref{eq:nablaG} gives
$\boldsymbol{e}_{\nbel+\ell}^{\!\top} \nabla G = [s_{\ell 1}, s_{\ell 2}, \dots, s_{\ell \nbsp}]$.
As a consequence,
\begin{equation}\setlength{\arraycolsep}{2.5pt}
    \operatorname{Diag}(\mathscr{Y}')^{-1} \nabla G^{-1} \boldsymbol{e}_{\nbel+\ell} \boldsymbol{e}_{\nbel+\ell}^{\!\top} \nabla G \operatorname{Diag}(\mathscr{Y}') = \begin{bmatrix}
    \, r_{\ell 1} s_{\ell 1}  & r_{\ell 1} s_{\ell 2} \dfrac{Y_2^{\prime}}{Y_1^{\prime}} & \dots & r_{\ell 1} s_{\ell \nbsp} \dfrac{Y_{\nbsp}^{\prime}}{Y_1^{\prime}} \;\, \\
    \, r_{\ell 2} s_{\ell 1} \dfrac{Y_1^{\prime}}{Y_2^{\prime}} & r_{\ell 2} s_{\ell 2} & \dots & r_{\ell 2} s_{\ell \nbsp} \dfrac{Y_{\nbsp}^{\prime}}{Y_2^{\prime}} \;\, \\
    \, \vdots & \vdots & \ddots & \vdots \;\, \\
    \, r_{\ell\nbsp} s_{\ell 1} \dfrac{Y_1^{\prime}}{Y_{\nbsp}^{\prime}} & r_{\ell\nbsp} s_{\ell 2} \dfrac{Y_2^{\prime}}{Y_{\nbsp}^{\prime}} & \dots & r_{\ell\nbsp} s_{\ell\nbsp} \;\,
    \end{bmatrix}\!\!,
\end{equation}
for all $\ell\in \{1,2,\dots,\nbre\}$. Summing over $\ell$, the last sum in \eqref{eq:bigchunk} equals $\Lambda^{-1}\nabla G^{-1}\Pi\nabla G\Lambda$ with $\Lambda=\operatorname{Diag}(\mathscr{Y}')$, and the same two identities give $(\nabla G^{-1}\Pi\nabla G)_{ij} = \sum_{\ell} r_{\ell i}s_{\ell j}$. Moreover, $\operatorname{Diag}(\mathscr{Y}')^{-2} \operatorname{Diag}(\mathscr{Y}^{\prime\prime}) + I = \operatorname{Diag}(\boldsymbol{\aleph})$ by \eqref{eq:alephtaui}. Since diagonal matrices commute, \eqref{eq:bigchunk} becomes $\mathcal{J}_H = \Lambda^{-1}\big[\operatorname{Diag}(\boldsymbol{\aleph}) - \nabla G^{-1}\Pi\nabla G\big]\Lambda\operatorname{Diag}(\NLresnv)$, which is \eqref{flat.mat00}. This completes the proof of Theorem~\ref{th:flatmat00}.

\subsection{Minimizing the upper bound of the flatness number}\label{sse:minupperflatchem}
For the CD parametrizations $\fs_i \mapsto (\textsf{X}_{\alpha_i}(\fs_i), \textsf{Y}_{\alpha_i}(\fs_i))$ defined in \eqref{eq:CD33para}, we have
\begin{equation}\label{CD_Yij}
    \aleph_i = 1+\dfrac{Y_i^{\prime\prime}}{\left(Y_i^{\prime}\right)^2}=\alpha_i,
\end{equation}
which is plainly due to the fact that
\begin{equation}
    Y'_i(\fs_i) = \frac{1}{1+ (1-\alpha_i)\fs_i}, \qquad Y''_i(\fs_i) = - \frac{1-\alpha_i}{[1+ (1-\alpha_i)\fs_i]^2}.
\end{equation}
Thus, instead of explicitly depending on the $\fs_i$'s, the quantities $\aleph_i$ in \eqref{eq:Pm1JhP} now become tuning weights $\alpha_i$ at our disposal. Plugging \eqref{CD_Yij} into \eqref{eq:Pm1JhP}, we get
\begin{equation}\label{flat.equiv}
    \Lambda\,\mathcal{J}_{H_{\boldsymbol{\alpha}}}\,\Lambda^{-1}
    = \big[\operatorname{Diag}(\boldsymbol{\alpha}) - \nabla G^{-1}\Pi\,\nabla G\big]
      \operatorname{Diag}(\NLresnv),
\end{equation}
where $\Lambda\,\mathcal{J}_{H_{\boldsymbol{\alpha}}}\,\Lambda^{-1} = P^{-1}\mathcal{J}_{H_{\boldsymbol{\alpha}}} P$ with $P = \operatorname{Diag}(\mathscr{Y}')^{-1} = \operatorname{Diag}( Y_1^{\prime},Y_2^{\prime},\dots,Y_{\nbsp}^{\prime})^{-1}$.

By the entrywise formula of Theorem~\ref{th:flatmat00}, the upper bound \eqref{eq:flatupperbound} of the flatness number can then be computed as
\begin{equation}
    (\widehat{\rho}_{H_{\boldsymbol{\alpha}}})^2 = \sum_{i=1}^{\nbsp} \bigg(\alpha_i - \displaystyle\sum_{\ell=1}^{\nbre}r_{\ell i} s_{\ell i} \bigg)^{\!2}\NLresns_i^2 + \diamond.
\end{equation}
The missing terms, represented by ``$\diamond$'' in the above equation, namely
\[
    \diamond=\sum_{j=1}^{\nbsp}\sum_{i\neq j}\bigg(\sum_{\ell=1}^{\nbre} r_{\ell i}s_{\ell j}\bigg)^{\!2}\NLresns_j^2,
\]
do not depend on $\boldsymbol{\alpha}$ at a fixed value of $\boldsymbol{x}$. Therefore, minimizing $(\widehat{\rho}_{H_{\boldsymbol{\alpha}}})^2$ with respect to $\boldsymbol{\alpha}\in [0,1]^{\nbsp}$ is a sum of squares problem in separated variables. A minimizer, unique whenever all the $\NLresns_i$'s are nonzero, is
\begin{equation}
     \alpha_i= \operatorname{Proj}_{[0,1]} \bigg( \sum_{\ell=1}^{\nbre}r_{\ell i} s_{\ell i} \bigg)
\end{equation}
for $i\in \{1,2,\ldots,\nbsp\}$. This establishes Proposition~\ref{prop:CDoptimalpha}.

Here, we have used a well-known fact in optimization, namely that a convex function of one variable can be minimized on an interval of $\mathbb{R}$ by first minimizing it over $\mathbb{R}$ and then projecting the minimizer onto the interval \cite{Ciarlet1988}.

\subsection{Simultaneous vanishing of the two normalized residuals}\label{proof:simultvanishRes}
Throughout this proof, we write $\nabla G(\fv)$ for $\nabla G(\mathscr{Y}(\fv))$, and $\boldsymbol{y}^k=\mathscr{Y}(\fv_g,\fv_b)$, where $(\fv_g,\fv_b)=\fv^k$. For a matrix $M$ with $\nbsp$ rows, $[M]_b$ denotes the block of its rows indexed by $\mathcal{B}$, and for a vector $\boldsymbol{v}$, $\{\boldsymbol{v}\}_b$ denotes its subvector indexed by $\mathcal{B}$. Up to a permutation of the unknowns, we may assume that the bad unknowns come last, and we set $\nbbad:=|\mathcal{B}|$.
We now verify that the system of equations consisting of the bad nonlinear normalized residuals
\begin{equation}\label{eq:myDeltab}
    \NLresnv_b(\fv_g,\widetilde{\fv}_b) = \big\{ \nabla G(\fv_g,\widetilde{\fv}_b)^{-1} H(\fv_g,\widetilde{\fv}_b)\big\}_b = 0
\end{equation}
in the unknown $\widetilde{\fv}_b$ is equivalent to that made up of the bad linear normalized residuals
\begin{equation}\label{eq:myXib}
    \resnv_b(\boldsymbol{y}^{k};\fv_g, \widetilde{\fv}_b)=  \big\{ \nabla G(\fv_g,\fv_b)^{-1} H(\fv_g,\widetilde{\fv}_b) \big\}_b = 0.
\end{equation}
Split the columns of the log-trick Jacobian \eqref{eq:nablaG} according to the good--bad
partition and write
\begin{equation}\label{apd:nablaG}
    \nabla G(\fv_g,\fv_b) = \big[\,\nabla_{\!g} G \mid \nabla_{\!b} G(\fv_b)\,\big],
\end{equation}
where $\nabla_{\!g} G\in\mathbb{R}^{\nbsp\times (\nbsp-\nbbad)}$ gathers the columns indexed by the
good components and $\nabla_{\!b} G$ the remaining $\nbbad$ ones. Only the latter depend on
$\fv_b$, since, by \eqref{eq:nablaG}, the $j$-th column of $\nabla G$ involves $\boldsymbol{x}$
through its $j$-th entry alone. Consequently, the two matrices
$\nabla G(\fv_g,\fv_b)$ and $\nabla G(\fv_g,\widetilde{\fv}_b)$ share the same block
$\nabla_{\!g} G$, and differ only in their last $\nbbad$ columns.

Set $\mathcal{W} := \operatorname{ran}\nabla_{\!g} G$. Both Jacobians being invertible, $\nabla_{\!g} G$ has full
column rank, so that $\dim \mathcal{W} = \nbsp-\nbbad$ and $\dim \mathcal{W}^{\perp} = \nbbad$. Now let $M$ denote either
$\nabla G(\fv_g,\fv_b)$ or $\nabla G(\fv_g,\widetilde{\fv}_b)$. Since $M^{-1}M = I$, the last
$\nbbad$ rows of $M^{-1}$ annihilate every column of $\nabla_{\!g} G$, hence belong (once transposed) to $\mathcal{W}^{\perp}$.
Being $\nbbad$ rows of an invertible matrix, they are linearly independent, and they therefore form
a basis of $\mathcal{W}^{\perp}$.

Applying this to both choices of $M$, the row blocks $[\nabla G(\fv_g,\fv_b)^{-1}]_b$ and
$[\nabla G(\fv_g,\widetilde{\fv}_b)^{-1}]_b$ are two bases of the same $\nbbad$-dimensional
space $\mathcal{W}^{\perp}$. Hence there exists an invertible $\nbbad\times \nbbad$ change-of-basis
matrix $Q$ such that
\begin{align}\label{eq:2basiseq}
    \big[ \nabla G(\fv_g,\fv_b)^{-1}\big]_b = Q \big[\nabla G(\fv_g,\widetilde{\fv}_b)^{-1}\big]_b.
\end{align}

Equation \eqref{eq:2basiseq} means that by a linear combination of the bad rows of $\nabla G(\fv_g,\widetilde{\fv}_b)^{-1}$, we can recover any bad row of $\nabla G(\fv_g,\fv_b)^{-1}$.
Right-multiplication by $H(\fv_g,\widetilde{\fv}_b)$ brings equality \eqref{eq:2basiseq} to
\begin{equation}
    \big[ \nabla G(\fv_g,\fv_b)^{-1}\big]_b  H(\fv_g,\widetilde{\fv}_b) = Q \big[ \nabla G(\fv_g,\widetilde{\fv}_b)^{-1}\big]_b  H(\fv_g,\widetilde{\fv}_b),
\end{equation}
which is the same as
\begin{equation}\label{eq:passage}
    \big\{ \nabla G(\fv_g,\fv_b)^{-1} H(\fv_g,\widetilde{\fv}_b)\big\}_b
    = Q \big\{\nabla G(\fv_g,\widetilde{\fv}_b)^{-1} H(\fv_g,\widetilde{\fv}_b)\big\}_b.
\end{equation}
Equivalence between \eqref{eq:myDeltab} and \eqref{eq:myXib} results from \eqref{eq:passage} and from the invertibility of $Q$.
Theorem~\ref{th:equivNLresiduals} is thus verified.

\subsection{Bad subset for AANC}\label{sse:badAANC}
Since multiplying the objective function by a positive constant does not change the location of the maximizer, we have
\begin{equation}
    j_* = \operatorname{arg} \max_{0\le j\leq\nbsp} \Big\{\dfrac{\nbsp}{\widecheck{\rho}^2_H}R_j-j \Big\}
    = \operatorname{arg} \max_{0\le j\leq\nbsp} \Big\{R_j-j\dfrac{\widecheck{\rho}^2_H}{\nbsp} \Big\}.
\end{equation}
From the definition \eqref{eq:cumulativeSum} of $R_j$, it follows that, for $0\le j\le\nbsp$,
\begin{equation}\label{eq:newobjectif}
    R_j-j\dfrac{\widecheck{\rho}^2_H}{\nbsp} = \sum_{\ell=1}^j \Big(\Gamma_{(\ell)}-\dfrac{\widecheck{\rho}^2_H}{\nbsp}\Big).
\end{equation}
We note that
\[
R_1 = \Gamma_{(1)} \ge \frac{\widecheck{\rho}^2_H}{\nbsp}
\]
because $\Gamma_{(1)}$ is the greatest element of the family $(\Gamma_{(1)}, \ldots, \Gamma_{(\nbsp)})$ and $\widecheck{\rho}^2_H/\nbsp$ is their average. Therefore, the first summand in the right-hand side of \eqref{eq:newobjectif} is nonnegative, that is,
\begin{equation}
    \Gamma_{(1)} - \frac{\widecheck{\rho}^2_H}{\nbsp} \ge 0.
\end{equation}
Since the sequence $(\Gamma_{(\ell)})_{1\le \ell\le \nbsp}$ is nonincreasing, we also have
\begin{equation}
    \Gamma_{(1)} - \frac{\widecheck{\rho}^2_H}{\nbsp} \ge \Gamma_{(2)} - \frac{\widecheck{\rho}^2_H}{\nbsp} \ge \ldots \ge \Gamma_{(\nbsp)} - \frac{\widecheck{\rho}^2_H}{\nbsp},
\end{equation}
where the last term is nonpositive, i.e.,
\begin{equation}
    \Gamma_{(\nbsp)} - \frac{\widecheck{\rho}^2_H}{\nbsp} \le 0.
\end{equation}
Thus, the sequence $( \Gamma_{(\ell)} - \widecheck{\rho}^2_H/\nbsp )_{1\le \ell\le\nbsp}$ is nonincreasing and goes from nonnegative to nonpositive values.
Consequently, the partial sums \eqref{eq:newobjectif} increase strictly with $j$ as long as $\Gamma_{(j)} > \widecheck{\rho}^2_H/\nbsp$, and they do not increase afterwards. Their maximum over $\{0,1,\ldots,\nbsp\}$ is therefore attained at the index $j_*$ given by \eqref{eq:solmaxecart}, which is the smallest maximizer. If all the $\Gamma_j$'s are equal, all the partial sums vanish, the set in \eqref{eq:solmaxecart} is empty, and the convention $\max\emptyset=0$ gives $j_*=0$, which is again the smallest maximizer. This proves Proposition~\ref{pr:solminjnc}.

\subsection{Description of the chemical systems}\label{sse:testcases}
\subsubsection{H${}_2$O test case}\label{sse:descript.water}
The problem of water dissociation consists of 3 species, 2 elements and 1 reaction summarized by
\begin{subequations}
\begin{align}
    \mathcal{C} &= (
\mathrm{H_2O},\, \mathrm{H^+},\,\mathrm{OH^-}),\\
\mathcal{E} &= (\mathrm{H},\, \mathrm{O}),\\
\mathcal{R} &= (\Hp + \OHm  \rightleftharpoons \HHO).
\end{align}
\end{subequations}
The corresponding atomic and stoichiometric matrices are
\begin{equation}
    \textsf{A}=\begin{bmatrix}
        2 & 1 & 1 \\
        1 & 0 & 1
    \end{bmatrix}\!\!, \qquad \quad\textsf{S}^{\!\top} = \begin{bmatrix}
        1  & -1 & -1
    \end{bmatrix}\!\!.
\end{equation}
Given the feeds $(\textsf{f}_{\mathrm{H}}, \textsf{f}_{\mathrm{O}})$, system \eqref{eq:enlarged} reads
\begin{subequations}\label{eq:systmolfrac}
\begin{alignat}{5}
    2 x_1 \ &+ \  & x_2 \ &+ & \ x_3 \ &- & \, \textsf{f}_{\mathrm{H}}  \omega  \ && &=\  0, \label{eq:firsteq}\\
     x_1 \ &    &  \ &+ \   & x_3 \ &- & \, \textsf{f}_{\mathrm{O}}  \omega \ && &=\ 0, \label{eq:sectarget}\\
     x_1 \ &+ \ & x_2   \ & +   & \ x_3 \ && &-\ &1 &=\   0, \label{eq:thirdeq}\\
    \ln x_1 \ &- \ & \ln x_2  &- \ &  \ln x_3 \ && &-\ &t &=\ 0,
\end{alignat}
\end{subequations}
with
\begin{equation}
    t=-\frac{1}{\textsf{R}\textsf{T}}(\mu_{\mathrm{H}_2 \mathrm{O}}^{\circ}-\mu_{\mathrm{H}^{+}}^{\circ}-\mu_{\mathrm{OH}^{-}}^{\circ}).
\end{equation}

Instead of solving the $4\times 4$ system \eqref{eq:systmolfrac}, we proceed with the following transformation so as to highlight a dimensionless parameter $\nu$ representing the charge imbalance between the ions $\mathrm{H}^{+}$ and $\mathrm{OH}^{-}$ and directly affecting the desired molar fractions $(x_1,x_2,x_3)$.
Replacing the second equation \eqref{eq:sectarget} by the linear combination
\[
\eqref{eq:sectarget} - \frac{\textsf{f}_{\mathrm{O}}}{\textsf{f}_{\mathrm{H}}}\times\eqref{eq:firsteq} - \Big( 1 - \frac{2\textsf{f}_{\mathrm{O}}}{\textsf{f}_{\mathrm{H}}}\Big)\times\eqref{eq:thirdeq},
\]
multiplying the result by $-2$, and making use of the dimensionless number
\begin{equation}\label{eq:nu.def}
    \nu=1-2\dfrac{\textsf{f}_{\mathrm{O}}}{\textsf{f}_{\mathrm{H}}},
\end{equation}
we end up with the system
\begin{subequations}\label{eq:44system}
\begin{alignat}{4}
    2x_{1}  &\,+\,  &x_{2}\   &+\,  &x_{3}\     &-\ &\textsf{f}_{\mathrm{H}}\omega &=\ 0,\\
    &   &(1+\nu)\, x_{2}\ &-\, &\,(1-\nu)\, x_{3}\ &-\ &2\nu &=\ 0,\\
    x_{1}     &\,+    &x_{2}\   &+\,  &x_{3}\     &- &1\                        &=\ 0,\\
    \ln x_{1} &\,- &\ln x_{2}\  &-\,  &\ln x_{3}\   &- &t\                        &=\ 0,
\end{alignat}
\end{subequations}
in the 4 unknowns $(x_1,x_2,x_3,\omega)$. Since the unknown $\omega$ appears only in the first equation, which is moreover linear, it can be effortlessly deduced from $(x_1,x_2,x_3)$. We can thus focus on the $\nu$-dependent $3\times 3$ subsystem
\begin{subequations}\label{eq:watermodel33}
\begin{alignat}{5}
    f_1:=& & &   &(1+\nu)\, x_{2}\ &-\, &\,(1-\nu)\, x_{3}\ &-\ &2\nu &=\ 0,\\
    f_2:=& &x_{1}    &\,+    &x_{2}\   &+\,  &x_{3}\     &- &1\                        &=\ 0, \\
    f_3:=& &\ \ln x_{1} &\,- &\ln x_{2}\  &-\,  &\ln x_{3}\   &- &t\                        &=\ 0,
\end{alignat}
\end{subequations}
in the 3 unknowns  $(x_1,x_2,x_3)$.

Experiments are conducted for $\nu = 10^{-1}$ and $\nu = 10^{-7}$. The reference potentials \cite{JohnsonOelkersHelgeson1992} (see also \cite[Appendix A]{Jonval2024}), in $\mathrm{J\,mol^{-1}}$ at $\textsf{P} = 1$ bar and $\textsf{T} = 298.15\,\mathrm{K}$, are
\begin{gather*}
\mu^\circ_{\HHO} = - 237138.97589284607, \quad \mu^\circ_{\Hp} = 9956.885403312557, \\
\mu^\circ_{\OHm} = - 147340.5566329419,
\end{gather*}
so that $t = 40.2406556362$ with $\textsf{R} = 8.3145\,\mathrm{J\,mol^{-1}\,K^{-1}}$.

\subsubsection{NaCl test case}\label{sse:descript.NaCl}
We next consider a larger chemical system, hereafter referred to as $\NaCl$, composed of 8 species, 4 elements and 4 reactions. More specifically,
\begin{subequations}
\begin{align}
\mathcal{C} &= (
\mathrm{H_2O},\, \mathrm{Cl^-},\, \mathrm{Na^+},\, \mathrm{H^+},\,
\mathrm{NaCl},\, \mathrm{NaOH},\, \mathrm{OH^-},\, \mathrm{HCl}),\\
\mathcal{E} &= (\mathrm{H},\, \mathrm{O},\, \mathrm{Na},\, \mathrm{Cl}),\\
\mathcal{R} &= \begin{aligned}[t]
       (\mathrm{NaCl} &\rightleftharpoons \mathrm{Cl^-} + \mathrm{Na^+}, & 
        \mathrm{NaOH} &\rightleftharpoons \mathrm{H_2O} + \mathrm{Na^+} - \mathrm{H^+},\\[-0.75mm]
        \mathrm{OH^-} &\rightleftharpoons \mathrm{H_2O} - \mathrm{H^+}, &
        \mathrm{HCl} & \rightleftharpoons \mathrm{Cl^-} + \mathrm{H^+}).
\end{aligned}
\end{align}
\end{subequations}
The corresponding matrices are
\begin{equation}
    \textsf{A} = \begin{bmatrix}
2 & 0 & 0 & 1 & 0 & 1 & 1 & 1 \\
1 & 0 & 0 & 0 & 0 & 1 & 1 & 0 \\
0 & 0 & 1 & 0 & 1 & 1 & 0 & 0 \\
0 & 1 & 0 & 0 & 1 & 0 & 0 & 1
\end{bmatrix}\!\!, \quad
    \textsf{S}^{\!\top} = \begin{bmatrix}
0 & 1 & 1 & 0 & \!\!\!\!-1 & 0 & 0 & 0 \\
1 & 0 & 1 &  \!\!\!\!-1 & 0 &  \!\!\!\!-1 & 0 & 0 \\
1 & 0 & 0 &  \!\!\!\!-1 & 0 & 0 &  \!\!\!\!-1 & 0 \\
0 & 1 & 0 & 1 & 0 & 0 & 0 &  \!\!\!\!-1
\end{bmatrix}\!\!.
\end{equation}
The feeds are prescribed as
\begin{equation}
    \textsf{f}_{\mathrm{H}} = 111.0174, \qquad \textsf{f}_{\mathrm{O}} = 55.5087, \qquad
    \textsf{f}_{\mathrm{Na}} = 0.1, \qquad \textsf{f}_{\mathrm{Cl}} = 0.1,
\end{equation}
while the $\textsf{t}_\ell$'s defined in \eqref{eq:vecTref} are deduced from the reference chemical potentials \cite{JohnsonOelkersHelgeson1992,Jonval2024}
\begin{subequations}
    \begin{alignat}{2}
    \mu_{\HHO}^\circ &= - 237138.97589284607 & \qquad \quad \mu_{\mathrm{NaCl}}^\circ & = - 378778.4933733225 \\[-0.75mm]
    \mu_{\mathrm{Cl}^-}^\circ & = - 121332.84714937139 & \qquad \quad \mu_{\mathrm{NaOH}}^\circ & = - 408024.6192443839 \\[-0.75mm]
    \mu_{\mathrm{Na}^+}^\circ & = -251923.79553026147 & \qquad \quad \mu_{\OHm}^\circ & = - 147340.5566329419\\[-0.75mm]
    \mu_{\Hp}^\circ & = 9956.885403312557 & \qquad \quad \mu_{\mathrm{HCl}}^\circ & = - 117278.5194378308
    \end{alignat}
\end{subequations}
at the temperature $\textsf{T} = 298.15 \mathrm{K}$. This time, the enlarged system \eqref{eq:enlarged} is used as it stands.

\subsubsection{Seawater test case}\label{sse:descript.seawater}
Finally, we consider an even larger chemical system, hereafter referred to as Seawater, made up of 37 species, 10 elements and 27 reactions. More specifically,
\begin{subequations}
\begin{align}
    \mathcal{C} &= \begin{aligned}[t]
(
&\mathrm{H_2O},\, \mathrm{Na^+},\, \mathrm{Mg^{2+}},\, \mathrm{SO_4^{2-}},\, \mathrm{Ca^{2+}},\, \mathrm{K^+},\, \mathrm{HCO_3^-},\, \mathrm{Sr^{2+}},\, \mathrm{Cl^-},\, \mathrm{H^+},\\[-0.75mm]
&\mathrm{CaOH^+},\, \mathrm{CO_2},\, \mathrm{CO_3^{2-}},\, \mathrm{CaHCO_3^+}, \mathrm{CaCl^+},\, \mathrm{CaCl_2},\, \mathrm{CaSO_4},\, \mathrm{HSO_4^-},\\[-0.75mm]
&\mathrm{KCl}, \,\mathrm{KHSO_4},\, \mathrm{KOH},\, \mathrm{KSO_4^-},\, \mathrm{Mg(CO_3)},\, \mathrm{Mg(HCO_3)^+},\, \mathrm{MgCl^+}, \\[-0.75mm]
&\mathrm{MgOH^+},\, \mathrm{NaCl}, \,\mathrm{NaOH},\, \mathrm{OH^-},\, \mathrm{Sr(HCO_3)^+},\, \mathrm{SrCl^+},\, \mathrm{SrOH^+},\\[-0.75mm]
&\mathrm{NaSO_4^-},\, \mathrm{MgSO_4}, \, \mathrm{HCl},\, \mathrm{CaCO_3},\, \mathrm{SrCO_3}
),
\end{aligned}\\
\mathcal{E} &=
(\mathrm{H},\, \mathrm{O},\, \mathrm{Na},\, \mathrm{Mg},\, \mathrm{S},\, \mathrm{Ca},\, \mathrm{K},\, \mathrm{C},\, \mathrm{Sr},\, \mathrm{Cl}),\\
\mathcal{R} & = \text{cf. Table }\, \ref{ta:Rseawater}.
\end{align}
\end{subequations}
The feeds are prescribed as
\begin{subequations}
\begin{alignat}{5}
    \textsf{f}_{\mathrm{H}} & = 110.78054, & \, \textsf{f}_{\mathrm{O}} & = 55.5087, & \,
    \textsf{f}_{\mathrm{Na}} & = 0.469, & \, \textsf{f}_{\mathrm{Mg}} & = 0.0528, & \, \textsf{f}_{\mathrm{S}} & = 0.0282,\\[-0.75mm]
    \textsf{f}_{\mathrm{Ca}} & = 0.0103, & \, \textsf{f}_{\mathrm{K}} & = 0.0102, & \, \textsf{f}_{\mathrm{C}} & = 0.00206, & \, \textsf{f}_{\mathrm{Sr}} & = 10^{-5},
    & \, \textsf{f}_{\mathrm{Cl}} & = 0.546,
\end{alignat}
\end{subequations}
while the $\textsf{t}_k$'s defined in \eqref{eq:vecTref} stem from the reference chemical potentials of Table~\ref{ta:Mseawater}.
Here again, the enlarged system \eqref{eq:enlarged} is used as it stands. All initial guesses deployed in this model are built in the same way as for the $\NaCl$ test case.

\begin{table}[h]
\begin{alignat*}{2}
\mathrm{CaOH^+} &\rightleftharpoons \mathrm{H_2O} + \mathrm{Ca^{2+}} - \mathrm{H^+};
& \qquad\quad
\mathrm{CO_2} &\rightleftharpoons -\mathrm{H_2O} + \mathrm{HCO_3^-} + \mathrm{H^+}; \\[-0.75mm]
\mathrm{CO_3^{2-}} &\rightleftharpoons \mathrm{HCO_3^-} - \mathrm{H^+};
& \qquad\quad
\mathrm{CaHCO_3^+} &\rightleftharpoons \mathrm{Ca^{2+}} + \mathrm{HCO_3^-}; \\[-0.75mm]
\mathrm{CaCl^+} &\rightleftharpoons \mathrm{Ca^{2+}} + \mathrm{Cl^-};
& \qquad\quad
\mathrm{CaCl_2} &\rightleftharpoons \mathrm{Ca^{2+}} + 2\mathrm{Cl^-}; \\[-0.75mm]
\mathrm{CaSO_4} &\rightleftharpoons \mathrm{SO_4^{2-}} + \mathrm{Ca^{2+}};
& \qquad\quad
\mathrm{HSO_4^-} &\rightleftharpoons \mathrm{SO_4^{2-}} + \mathrm{H^+};\\[-0.75mm]
\mathrm{KCl} &\rightleftharpoons \mathrm{K^+} + \mathrm{Cl^-};
& \qquad\quad
\mathrm{KHSO_4} &\rightleftharpoons \mathrm{SO_4^{2-}} + \mathrm{K^+} + \mathrm{H^+}; \\[-0.75mm]
\mathrm{KOH} &\rightleftharpoons \mathrm{H_2O} + \mathrm{K^+} - \mathrm{H^+};
& \qquad\quad
\mathrm{KSO_4^-} &\rightleftharpoons \mathrm{SO_4^{2-}} + \mathrm{K^+}; \\[-0.75mm]
\mathrm{Mg(CO_3)} &\rightleftharpoons \mathrm{Mg^{2+}} + \mathrm{HCO_3^-} - \mathrm{H^+};
& \qquad\quad
\mathrm{Mg(HCO_3)^+} &\rightleftharpoons \mathrm{Mg^{2+}} + \mathrm{HCO_3^-}; \\[-0.75mm]
\mathrm{MgCl^+} &\rightleftharpoons \mathrm{Mg^{2+}} + \mathrm{Cl^-};
& \qquad\quad
\mathrm{MgOH^+} &\rightleftharpoons \mathrm{H_2O} + \mathrm{Mg^{2+}} - \mathrm{H^+};
\\[-0.75mm]
\mathrm{NaCl} &\rightleftharpoons \mathrm{Na^+} + \mathrm{Cl^-};
& \qquad\quad
\mathrm{NaOH} &\rightleftharpoons \mathrm{H_2O} + \mathrm{Na^+} - \mathrm{H^+}; \\[-0.75mm]
\mathrm{OH^-} &\rightleftharpoons \mathrm{H_2O} - \mathrm{H^+};
& \qquad\quad
\mathrm{Sr(HCO_3)^+} &\rightleftharpoons \mathrm{HCO_3^-} + \mathrm{Sr^{2+}}; \\[-0.75mm]
\mathrm{SrCl^+} &\rightleftharpoons \mathrm{Sr^{2+}} + \mathrm{Cl^-};
& \qquad\quad
\mathrm{SrOH^+} &\rightleftharpoons \mathrm{H_2O} + \mathrm{Sr^{2+}} - \mathrm{H^+}; \\[-0.75mm]
\mathrm{NaSO_4^-} &\rightleftharpoons \mathrm{Na^+} + \mathrm{SO_4^{2-}};
& \qquad\quad
\mathrm{MgSO_4} &\rightleftharpoons \mathrm{Mg^{2+}} + \mathrm{SO_4^{2-}}; \\[-0.75mm]
\mathrm{HCl} &\rightleftharpoons \mathrm{Cl^-} + \mathrm{H^+};
& \qquad\quad
\mathrm{CaCO_3} &\rightleftharpoons \mathrm{Ca^{2+}} + \mathrm{HCO_3^-} - \mathrm{H^+};\\[-0.75mm]
\mathrm{SrCO_3} &\rightleftharpoons \mathrm{HCO_3^-} + \mathrm{Sr^{2+}} - \mathrm{H^+}.
& \qquad\quad
\end{alignat*}
\caption{Set of 27 reactions in the Seawater system.}
\label{ta:Rseawater}
\end{table}
\begin{table}[h]
    \centering
    \begin{tabular}{l@{$\;\,$}l@{$\qquad\quad$}l@{$\;\,$}l}
        $\HHO$ & -237138.97589284607 &  $\mathrm{KHSO}_4$  & -1.0084285424401537e6 \\[0.25mm]
        $\mathrm{Na}^+$ & -251923.79553026147  & $\mathrm{KOH}$ & -427271.0297495857 \\[0.25mm]
        $\mathrm{Mg}^{2+}$  & -444027.90215494944  & $\mathrm{KSO}_4^-$  & -1.0219846680885215e6\\[0.25mm]
        $\mathrm{SO}_4^{2-}$ & -734502.084490013 & $\mathrm{Mg(CO_3)}$ & -989014.6934790071\\[0.25mm]
        $\mathrm{Ca}^{2+}$ & -542833.0422538111 & $\mathrm{Mg(HCO_3)^+}$ & -1.0368796788141541e6\\[0.25mm]
        $\mathrm{K}^+$ & -272504.8996797245 & $\mathrm{MgCl^+}$ & -574547.7790990678 \\[0.25mm]
        $\mathrm{HCO}_{3}^{-}$ & -576982.8987273659 & $\mathrm{MgOH^+}$ & -614525.8903919919\\[0.25mm]
        $\mathrm{Sr}^{2+}$ & -553878.8001745282 & $\mathrm{NaCl}$ & -378778.4933733225\\[0.25mm]
        $\mathrm{Cl}^-$ & -121332.84714937139 & $\mathrm{NaOH}$ & -408024.6192443839\\[0.25mm]
        $\mathrm{H}^+$ & 9956.885403312557 & $\mathrm{OH^-}$ & -147340.5566329419\\[0.25mm]
        $\mathrm{CaOH}^+$ & -706762.1524620559 & $\mathrm{Sr(HCO_3)^+}$ & -1.1478393365598267e6 \\[0.25mm]
        $\mathrm{CO}_2$ & -376017.06579503417 & $\mathrm{SrCl^+}$\,  & -683750.1584404652\\[0.25mm]
        $\mathrm{CO}_3^{2-}$ & -518026.1359207859 & $\mathrm{SrOH^+}$ & -715130.1518277868\\[0.25mm]
        $\mathrm{CaHCO}_3^+$ & -1.135747579476859e6 & $\mathrm{NaSO_4^-}$ & -1.0003785022780169e6 \\[0.25mm]
        $\mathrm{CaCl}^+$ & -672453.3605597073 & $\mathrm{MgSO_4}$ & -1.2012145947920694e6 \\[0.25mm]
        $\mathrm{CaCl}_2$ & -801738.9637317051 & $\mathrm{HCl}$ & -117278.5194378308\\[0.25mm]
        $\mathrm{CaSO}_4$ & -1.2993419278916481e6 & $\mathrm{CaCO_3}$ & -1.0898072308832686e6\\[0.25mm]
        $\mathrm{HSO}_4^-$ & -745798.9041292057 & $\mathrm{SrCO_3}$ & -1.0982170694298274e6\\[0.25mm]
        $\mathrm{KCl}$ & -389322.1898012777 &  &
    \end{tabular}
    \caption{Set of 37 reference chemical potentials $\mu^\circ$ for the Seawater system.}
    \label{ta:Mseawater}
\end{table}

\end{document}